\newcommand{\cotor}{\operatorname{Cotor}}
\renewcommand{\hd}{d_v}
\renewcommand{\cd}{d_h}
\renewcommand{\exte}{u}
\begin{document}

\begin{frontmatter}

\title{Operations in the homology spectral sequence of a cosimplicial infinite loop space}
\author{Philip Hackney\fnref{fn}}
\ead{hackney@math.ucr.edu}
\address{Department of Mathematics, University of California, Riverside, 900 University Avenue,
Riverside, CA 92521, USA}
\fntext[fn]{Phone: 951-827-5402, Fax: 951-827-7314}

\begin{abstract}
Consider the mod 2 homology spectral sequence associated to a cosimplicial space $X$. We construct external operations whose target is the spectral sequence associated to $E\Sigma_2 \times_{\Sigma_2} (X\times X)$. If $X$ is a cosimplicial $E_\infty$-space, we couple these external operations with the structure map \[E\Sigma_2 \times_{\Sigma_2} (X\times X) \to X\] to produce internal operations in the spectral sequence. 
In the sequel we show that they agree with the usual Araki-Kudo operations on the abutment $H_*(\Tot X)$.
\end{abstract}

\begin{keyword}
Araki-Kudo operation\sep cosimplicial space\sep spectral sequence 
\MSC 55S12 \sep 55T20
\end{keyword}

\end{frontmatter}



\section{Introduction}
\noindent Let $\msc$ be a fixed $E_\infty$ operad in spaces. The aim of this work is to provide a new proof of the following theorem.
\begin{thm}\label{T:turner}
Suppose that $X$ is a cosimplicial object in the category of $\msc$-spaces. Then there are operations in the mod-2 homology spectral sequence associated to $X$:
\begin{align*}
Q^{m}:E^r_{-s,t} &\to E^r_{-s,m+t} & m&\geq t \\
Q^m:E^r_{-s,t} &\to E^w_{m-s-t,2t} & m&\in[t-s,t]\end{align*}
where $w\in [r,2r-2]$  is given  by 
\[w = \begin{cases} r & m=t-s \\ 2r-2 & m\in [t-s+1,t-r+2] \\ r+t-m & m\in [t-r+3, t]. \end{cases} \]
\end{thm}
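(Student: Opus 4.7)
\medskip
\noindent\textbf{Proof plan.} My plan is to follow the two-stage strategy announced in the abstract. The main work lies in the first stage: constructing external Steenrod--Araki--Kudo operations
\[ Q^m : E^r_{-s,t}(X) \longrightarrow E^{w}_{*,*}\bigl(E\Sigma_2 \times_{\Sigma_2}(X \times X)\bigr) \]
with the bidegrees and pages claimed in the theorem. The second stage will then be formal: since $X$ is a cosimplicial $\msc$-space, the $E_\infty$-action $\theta : E\Sigma_2 \times_{\Sigma_2}(X \times X) \to X$ is a map of cosimplicial spaces and induces a morphism of homology spectral sequences, so postcomposing with $\theta_*$ yields the internal operations $Q^m$ on $E^r_{*,*}(X)$.

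For the external operations I would work at the bicomplex level. Let $C = C_{*,*}(X;\mathbb{F}_2)$ denote the normalized bicomplex of the cosimplicial space $X$, with horizontal differential $\cd$ and vertical differential $\hd$. Fix a $\Sigma_2$-free resolution $W \to \mathbb{F}_2$ with preferred basis $\{e_i\}_{i\geq 0}$ in degree $i$, together with a $\Sigma_2$-equivariant chain-level refinement of the Eilenberg--Zilber shuffle map
\[ W \otimes C \otimes C \longrightarrow C_{*,*}\bigl(E\Sigma_2 \times_{\Sigma_2}(X \times X);\mathbb{F}_2\bigr). \]
For a class $[\alpha] \in E^r_{-s,t}$ lifted to an appropriate cocycle representative, I would define the external $Q^m[\alpha]$ as the class of $e_{j}\otimes \alpha\otimes \alpha$ plus lower-filtration corrections, with $j$ determined by the desired target bidegree. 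The analysis then reduces to computing the total differential of these expressions in the target bicomplex and tracking the resulting filtration shifts.

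The principal obstacle will be pinning down the exact $E^w$-page on which each class lands, especially in the diagonal second family $m \in [t-s, t]$. Three regimes should emerge. In the vertical range $m \geq t$, the squaring is essentially fiberwise and the operation stays on $E^r$, yielding the first family. In the diagonal range, both endpoints $m = t-s$ and $m = t$ leave the operation on $E^r$, reflecting that the square of a genuine $E^r$-cycle either doubles the filtration cleanly ($m = t-s$) or shifts only the fiber degree without disturbing the filtration ($m = t$). Between the endpoints, cross-terms coupling $\cd\alpha$ with $\hd\alpha$ must be absorbed by lower-filtration corrections; these run out of room in the maximal-overlap range $m \in [t-s+1, t-r+2]$, forcing the target to $E^{2r-2}$, and in the window $m \in [t-r+3, t]$ they allow partial reduction giving $w = r+t-m$. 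This bookkeeping is essentially a bicomplex variant of the Steenrod--Epstein analysis of equivariant chain approximations, and controlling the equivariant cross-terms will carry the bulk of the technical weight.
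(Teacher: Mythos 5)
Your plan takes a genuinely different route from the paper. You propose a direct, formula-level construction: lift a class to a representative $\alpha$ in the bicomplex, form $e_j\otimes\alpha\otimes\alpha$ plus lower-filtration corrections, and track the total differential to read off the page $w$. This is closer in spirit to Turner's original proof (and to the Steenrod--Epstein tradition). The paper instead constructs the operations by computing the full spectral sequence of $\e(D_{rst})$ for the universal cosimplicial chain complexes $D_{rst}$ and then transports the generators via the representing map $\dmap_y : D_{rst}\to Y$. The universal-example approach buys you the $w$ for free: once $E^2(\e(D_{rst}))$ is known (Theorem~\ref{T:e2page}), the pattern of differentials is forced (Proposition~\ref{P:nontrivialdiff}), so the page on which the image classes can die is determined structurally, not by chasing correction terms. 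Your direct approach buys explicit formulas on the nose, which may be useful for compatibility with the abutment.

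The place where your plan is most exposed is well-definedness. You define $Q^m[\alpha]$ in terms of a representative $\alpha$ of the $E^r$-class, but the resulting cycle-level operation genuinely \emph{does} depend on the choice of representative: if you replace $\alpha$ by $\alpha + \partial z$ with $z\in Z^{r-1}_{-s+r-1,t-r+2}$, the horizontal operations do not vanish on $\partial z$, and the whole content of the ``$w$''-table is a precise measurement of this failure. Your plan's phrase ``lower-filtration corrections'' and the intuition about cross-terms running out of room gestures at this, but you will need, not merely bookkeeping of the corrections, but a proof that (i) the operations are additive (nontrivial, since $q^m$ is quadratic), and (ii) the value on a boundary $\partial z$ lands in a class that dies by the page $w$. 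In the paper these are Propositions~\ref{P:additivity}, Lemmas~\ref{L:lowerfiltr} and~\ref{L:verticalpartial}, and Corollary~\ref{C:vanishingofbidegrees}. Without a specific mechanism substituting for that analysis, the ``diagonal'' half of your claimed result ($m\in[t-s,t]$) remains an assertion: you would show that \emph{some} class is produced in the stated bidegree, but not that it is independent of the chosen lift on page $E^w$. I would suggest you at least isolate the additivity argument (squaring behaves like a quadratic form, so $q^m(\alpha+\beta)-q^m(\alpha)-q^m(\beta)$ is a cross-term you must show dies) and the boundary case ($\alpha = \partial z$), since those are the load-bearing lemmas.
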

The first proof of this was given by Jim Turner in \cite{turner}. We present here a fundamentally different proof which is more direct and is amenable to generalization.
In particular, in \cite{me2b}, we prove the analogue of Theorem~\ref{T:turner} for cosimplicial $E_{n+1}$-spaces. 
The calculations for the $E_\infty$-case are substantially more transparent than those for the $E_{n+1}$ case, and \cite{me2b} requires much of the machinery and many of the calculations in this paper. 
Had we bundled this into one paper, we would have had to prove the $E_\infty$ case first anyway, so for simplicity we have separated it out.
Another motivation for providing a new proof of \ref{T:turner} is to address the issue of compatibility with Araki-Kudo operations\footnote{These are also referred to as Dyer-Lashof operations.} in the abutment of the spectral sequence, $H_*(\Tot X)$. Thus this paper also feeds into \cite{me2a}, where some of the themes (small examples and external operations) help us to give the first complete proof of \cite[5.11]{turner} (the proof sketch given there has substantial gaps).

Pictorially, the images of these operations applied to an element in bidegree $(-s,t)$ lie on the solid and dotted lines of Figure~\ref{F:e2turner}. We call operations of the first type ``vertical operations'' and operations of the second type ``horizontal operations''.
\begin{figure}[ht]
\centering
\begin{tikzpicture}
	\draw (2.2,0) -- (-5,0);		
    \draw[->] (2,-0.2) -- (2,5);
    \draw (-1,2pt) -- (-1,-2pt) node[below] {$-s$};
    \draw (-4,2pt) -- (-4,-2pt) node[below] {$-2s$};
    \draw (1.9,1) -- (2.1,1) node[right] {$2t$};
        \draw (1.9,.5) -- (2.1,.5) node[right] {$t$};
%
    \fill (-4,1) circle (1.5pt) node[left] {\scriptsize $Q^{t-s}(x)$};;
    \fill (-1,1) circle (1.5pt) node[right] {\scriptsize $Q^t(x)$};
    \fill (-1,.5) circle (1.5pt) node[right] {\scriptsize $x$};
    \draw [very thick, dotted] (-4,1) -- (-1,1);
    \draw [->,very thick] (-1,1) -- (-1,5);
\end{tikzpicture} 
\caption{Vertical and Horizontal Operations}\label{F:e2turner}
\end{figure}

Prior to Turner's work, Araki-Kudo operations were constructed in Eilenberg-Moore spectral sequences in \cite{bahri} and \cite{ligaardmadsen}, though in both cases only the `vertical' operations were given. The theory of Steenrod operations, on the other hand, has a rich literature. Appropriate starting points are \cite{dwyer} and \cite[Chapter 7]{singer}. Steenrod operations in spectral sequences have been used to facilitate computations of cohomology; in particular, this extra structure sometimes allows one to prove collapse theorems. As an example, the existence of Steenrod operations was used in  \cite{mimuramori} to calculate the cohomology of the classifying spaces of some exceptional Lie groups.

We now mention two examples of cosimplicial $\msc$-spaces where the homology spectral sequence and its target are of interest. The theory presented in this sequence of papers may provide insight to each. For the first, consider a pullback diagram 
\[ \xymatrix{
E_f \ar@{->}[r] \ar@{->}[d] & E \ar@{->}[d] \\
Z \ar@{->}[r]^f & B 
}\]
and let $X^\bullet$ be the two-sided geometric cobar construction  with $X^n = Z \times B^n \times E$ (see \cite[\S 2]{rector} for a complete definition). Using $H_*$ to denote homology mod 2,  the homology spectral sequence associated to $X^\bullet$ is the Eilenberg-Moore spectral sequence
\[ E^2 \cong \cotor^{H_*(B)}(H_*(E), H_*(Z)) \Rightarrow H_*(E_f), \]
as demonstrated in \cite[Theorem 5.1]{rector}.
If  the above pullback diagram is a pullback diagram in the category of $\msc$-spaces, then $X^\bullet$ is a cosimplicial $\msc$-space. Thus, in this case, there are operations in the Eilenberg-Moore spectral sequence associated to the pullback diagram above.

As a second example, suppose that $Z$ is a space 
and that $R$ is a commutative ring. In \cite[Chapter I]{bkbook}, Bousfield and Kan define the $R$-completion of $Z$ as the totalization of a certain cosimplicial space $\utilde{R} Z$. It turns out that if $Z$ is a $\msc$-space, then $\utilde{R} Z$ is a cosimplicial $\msc$-space. Thus there are homology operations in the associated spectral sequence, which, coupled with convergence results of Bousfield and Shipley \cite{bousfield, shipley}, may help provide information about the mod 2 homology of $R$-completions of various $\msc$-spaces.

The scheme of the paper is as follows. We work with simple \emph{Bousfield-Kan universal examples}, which one might think of as cosimplicial spheres. We construct \emph{external operations} for these examples and use the universal property to transport these operations into the spectral sequence for an arbitrary cosimplicial space. When that cosimplicial space is actually a cosimplicial $\msc$-space, we then obtain internal operations by combining the external operations with the $\msc(2)$-structure.

We begin with some background and notation.

\subsection{Homology Spectral Sequence and Passage to Chains}\label{S:homss}

Let $X$ be a cosimplicial space (where `space' means either topological space or simplicial set). We briefly outline the construction of the homology spectral sequence associated to $X$ (see \cite{bousfield} for more details). 

\begin{defn}
We always work over the field $\K = \Z /2$ and just write $\ch$ for the category of chain complexes over $\K$. 
For a set $Z$ with basepoint $*$, the notation $\K Z$ will always mean the free $\K$-module with basis $Z\setminus \set{*}$, so that $\K$ defines a functor $\Set_* \to \K\operatorname{Mod}$. If $Z$ is just a set, then $\K Z$ will be denote the free $\K$-module with basis $Z$.
\end{defn}

We will use the normalization functor
$ N: \K \operatorname{Mod}^{\Delta^{op}} \to \ch $
given, on a simplicial $\K$-module $Z$, by
\[ N_k(Z) = Z_k / (s_0 Z_{k-1} + \dots s_{k-1} Z_{k-1}) \]
with differential induced from the sum of the face maps.
Let
\[ S_*: \text{Spaces} \to \ch \] be the mod-2 (normalized) chains functor. The first step in the construction of the spectral sequence is to pass from $X$ to the cosimplicial chain complex $S_*(X)$.

If $Y$ is a cosimplicial object in an abelian category $\msa$, we will write $CY$ for the conormalization of $Y$
\begin{equation} CY^p = \coker \Big(\bigoplus_{k=1}^p d^k: \bigoplus Y^{p-1} \to Y^p \Big), \label{E:conorm}\end{equation} 
which is an object in $\coch^{\geq 0} \msa$, the category of nonnegative cochain complexes over $\msa$. 
The differential on $CY$, which we write as $\cd$, is the map induced by the coface map $d^0$.
When $\msa=\ch$, the category of chain complexes over $\K$, we will regard $CY$ as a left-plane bicomplex which consists of the $\K$-module $CY_q^p$ in bidegree $(-p,q)$.

\begin{defn}
In this setting, we always write \[ \hd: CY_q^p \to CY_{q-1}^p \] for the differential coming from $Y^p$ and
\[ \cd: CY_q^p \to CY_q^{p+1} \] for the differential which is induced from the coface maps. Given the definition of conormalization, we have that $\cd$ is just the coface map $d^0$. We will also write $\partial = \hd + \cd$ for the differential in the total complex below.
\end{defn}

Given a bicomplex $B$, we will let $\Tot B$\label{SYM:producttot} denote the product total complex:
\begin{equation} (\Tot B)_m = \prod_{j} B_{j,m-j}. \label{E:bitot} \end{equation}The appropriate filtration in this situation is the one by columns
\[ F^{k}_m = \prod_{j\leq k} B_{j,m-j}. \] 
We may regard $\Tot C(Y)$ as a subcomplex of
\[ \prod_m \Hom (\Delta_*^m, Y^m), \]
where this is the internal $\Hom$ in the category $\ch$ and $\Delta_*^m$ denotes normalized simplicial chains on the simplicial set $\Delta^m = \Hom(-,[m])$. The natural filtration of $\Delta_*^\bullet$ by skeleta induces the above filtration on $\Tot C(Y)$ (see \cite{bousfield}).

The homology spectral sequence associated to a cosimplicial space $X$ is, by definition, the one obtained from this filtration on  $\Tot CS_*(X)$. For this reason we usually work with cosimplicial chain complexes rather than cosimplicial spaces, though of course we will have to check that various geometric constructions we make behave well when we pass to chains.
This will usually take the form of an $E^1$ or $E^2$ isomorphism between  algebraic and geometric constructions.

\begin{defn}\label{NOTATION:SS}
If $C$ is a filtered chain complex with $F^{-s-1} \ci F^{-s}$, we will write 
\begin{align*}
Z_{-s,t}^r &= \setm{x\in F^{-s} C_{t-s}}{\partial x \in F^{-s-r} C} \\
B_{-s,t}^r &= \partial Z^{r-1}_{-s+r-1, t-r+2} + Z^{r-1}_{-s-1, t+1}
\end{align*} for the $r$-cycles and $r$-boundaries, and
$E_{-s,t}^r = Z_{-s,t}^r / B_{-s,t}^r$ for the $(-s,t)$ position of the $r^{\text{th}}$ page of the spectral sequence . Furthermore, if $x$ and $y$ are in $F^{-s} C_{t-s}$, then we will write $x\sim_r y$ if $x-y \in B_{-s,t}^r$.
\end{defn}

\subsection{External Operations} Araki-Kudo operations were introduced in \cite{kudoaraki}. 
In this section we recall, following the construction in \cite{may}, 
that the operations for a $\msc$-space $X$ may be obtained by combining an ``external operation'' with the $\msc(2)$-structure map.

Set $\pi := \Sigma_2 = \set{e,\sigma}$ and let $\triv$ be $\K$ with trivial $\pi$-action.   Let $W$\label{SYM:W} be the minimal $\K\pi$-free resolution of $\triv$, which is defined by
\begin{equation} W_i = \begin{cases} \K \pi \cdot e_i & i\geq 0 \\ 0 & i< 0 \end{cases} \label{E:W} \end{equation}
and 
\[ d (e_i) = (1+\sigma) e_{i-1}. \] This is $\K\pi$-chain-homotopic to $S_*(E\pi)$, which, combined with the shuffle map, gives a quasi-isomorphism
\[ W\tp (S_*(X)\ten S_*(X)) \to S_* (E\pi \times_\pi (X\times X)) \] for any space $X$. 
We have that $\msc(2)$ is equivariantly homotopic to $E\pi$, so if $X$ is a $\msc$-space then there is a map 
\[ E\pi \times_\pi (X\times X) \to X \]
which induces
\[ W\tp (S_*(X) \ten S_*(X)) \to S_*(X). \]

Let $C$ be a chain complex. We define, for each $m$, a graded (non-additive) function of degree $m$
\begin{align*} q^m: C &\to W \tp (C \ten C) \\
c & \mapsto e_{m-|c|}\ten c \ten c + e_{m+1-|c|} \ten c \ten d c
\end{align*}
(interpreting terms with $e_{-n}$ as zero).
Notice that $q^m d = d q^m$ and that $q^m$ induces a homomorphism in homology.
If $C$ is a chain complex equipped with a map $W\tp (C \ten C) \to C$ (for example if $C$ is chains on a $\msc$-space) then the image of $[c]$ under the composite
\[ H_*(C) \to H_{*+m}(W\tp (C \ten C)) \to H_{*+m}(C) \] is, by definition, $Q^m [c]$. 
Since the Araki-Kudo operations $Q^m$ factor through $H_*(W\tp (C\ten C))$, we call \[ q^m: H_*(C) \to H_{*+m}(W\tp (C\ten C)) \] an `external operation'.

A naive first attempt to construct operations in our spectral sequence would involve mimicking what we did for chain complexes. We now explore this, though it won't be enough, as noted below.
The tensor product of two cosimplicial chain complexes $A_*^\bullet$ and $B_*^\bullet$ is the cosimplicial chain complex given in cosimplicial degree $p$ by $A_*^p \ten B_*^p$. If $X$ is a cosimplicial $\msc$-space, then we have a map 
\[ W\tp (S_*(X) \ten S_*(X)) \overset{E^1 \text{-iso.}}{\longrightarrow} S_*(E\pi \times_\pi (X \times X)) \to S_* X,  \] so it is useful to consider cosimplicial chain complexes $Y$ equipped with a map
\begin{equation}\label{E:c2action}
 W\tp (Y \ten Y) \to Y. \end{equation}
If $Y$ is any cosimplicial chain complex, then applying $q^m$ levelwise gives a map of cosimplicial \emph{sets}
\[ q^m: Y \to W\tp (Y \ten Y).\] 
One must then check (since $q^m$ is not additive) that this induces maps $E^r(Y) \to E^r(W\tp (Y \ten Y))$ for $r\geq 1$. 
Combining this with (\ref{E:c2action}) gives operations
\[ Q^m: E^r (Y) \overset{q^m}{\to} E^r(W\tp (Y\ten Y)) \to E^r(Y). \]
Notice, though, that $[y]\in E^r_{-s,t}$ is mapped to something in bidegree $(-s,t+m)$ and to zero for $m<t$, so we only pick up the vertical part of Figure~\ref{F:e2turner}. 

Henceforth, whenever we speak of external operations on a cosimplicial chain complex $Y$ we will mean operations whose target is the spectral sequence for $W\tp (Y\ten Y)$. It will grow quite tedious to write
\[ W\tp (Y \ten Y) \] for the homotopy orbit complex, so instead we will abbreviate it as
\begin{equation} \e(Y) = W\tp (Y\ten Y). \label{E:horbit}\end{equation}

\begin{rem}[See Section~\ref{S:products}] The class $[y]$ is in total degree $t-s$, so we expect there to be Araki-Kudo operations in total degrees $\geq 2(t-s)$. The vertical operations begin in total degree $2t-s$, indicating that we have missed a few.
There is one other operation that we could reasonably talk about here, the one at the bottom left. Notice that if $Y$ comes equipped with a map $W\tp (Y\ten Y) \to Y$, then there is a multiplication on the spectral sequence of $Y$. Since the bottom Araki-Kudo operation of an element is meant to be its square, it is compelling to notice that if $[y]$ is in  $E^r_{-s,t}$ then both $Q^{t-s}[y]$ and $[y]^2$ are in bidegree $(-2s,2t)$. This may convince the skeptical reader of the validity of the shape of Figure~\ref{F:e2turner}.
\end{rem}

\subsection{Bousfield-Kan Universal Examples}\label{S:introbk}

For each $p$, define $\bkspace_{(r,s,s)}^p$ as the cofiber of the inclusion
\[ \sk_{s-1} \Delta^p_+ \to \sk_{s+r-1} \Delta^p_+ \] in the category of simplicial based sets (here $\Delta_+^p$ is obtained by adding a disjoint basepoint to the standard simplicial $p$-simplex). For $t\geq s$, define $\bkspace_{(r,s,t)}^p$ by iterating the Kan suspension (\cite[2.2]{kan-ss} or \cite[III.5]{gj}) $t-s$ times.
\begin{equation} \bkspace_{(r,s,t)}^p = \Sigma^{t-s} \bkspace_{(r,s,s)}^p \label{E:boldD}\end{equation}
These cosimplicial spaces $\bkspace_{(r,s,t)}^\bullet$ were introduced in \cite{bk} where it was shown that the (integral) homology spectral sequence has the form of Figure~\ref{F:uebk}.
\begin{figure}[ht]
\centering
\begin{tikzpicture}
    \draw[->] (0.2,0) -- (-4,0);
    \draw[->] (0,-0.2) -- (0,3);
    \draw (-1,2pt) -- (-1,-2pt) node[below] {$-s$};
    \draw (-3,2pt) -- (-3,-2pt) node[below] {$-(s+r)$};
    \draw (2pt,1) -- (-2pt,1) node[right] {$t$};
    \draw (2pt,2) -- (-2pt,2) node[right] {$t+r-1$};
    \fill (-1,1) circle (2pt) node[right] {$\imath$};
    \fill (-3,2) circle (2pt) node[left] {$\ssd^r(\imath)$};
    \draw [->] (-1.2,1.1) -- (-2.8,1.9);
\end{tikzpicture}
\caption{Spectral Sequence for $\bkspace_{(r,s,t)}$}\label{F:uebk}
\end{figure}

The Bousfield-Kan example $\bkspace_{(r,s,t)}$ is weakly universal for elements in $E^r_{-s,t}$ of the homology spectral sequence. Indeed, for a cosimplicial simplicial abelian group $B$ and an element $b\in E^r_{-s,t}(B)$ there is a map of cosimplicial simplicial abelian groups $\Z \bkspace_{(r,s,t)} \to B$ which, on the spectral sequence level, sends $\imath$ to $b$. 
We used the adjective `weakly' because the cosimplicial simplicial abelian group map actually depends on a choice of representative of $b$.
Slightly more general ideas can be found in \cite{bk}, while slightly more specific ideas can be found in section \ref{S:examplehomology}.

In any case, the spaces $\bkspace_{(r,s,t)}$ are the atomic cosimplicial spaces when it comes to the homology spectral sequence. To understand external operations, we will first understand them in these basic examples. We shall examine the spectral sequence for the cosimplicial space $E\pi \times_\pi (\bkspace_{(r,s,t)} \times \bkspace_{(r,s,t)})$.

\begin{figure}[ht]
\centering
\begin{tikzpicture}
	\draw (2.2,0) -- (-5,0);		
    \draw[->] (2,-0.2) -- (2,5);
    \draw (-1,2pt) -- (-1,-2pt) node[below] {$-s$};
    \draw (-4,2pt) -- (-4,-2pt) node[below] {$-2s$};
    \draw (1.9,1) -- (2.1,1) node[right] {$2t$};
    \draw [very thick] (-4,1) -- (-1,1);
    \draw [->,very thick] (-1,1) -- (-1,5);
\end{tikzpicture} 
\caption{$E^2( E\pi \times_\pi (\bkspace_{(\infty,s,t)} \times \bkspace_{(\infty,s,t)}))$}\label{F:e2bkinf}
\end{figure}

Part of the proof of Theorem~\ref{T:turner} we present relies on a calculation (Theorem~\ref{T:e2page}) giving  Figure~\ref{F:e2bkinf} (or Figure~\ref{F:e2orig} on page \pageref{F:e2orig}), which is extremely suggestive when compared to Figure~\ref{F:e2turner} on page \pageref{F:e2turner}.

\subsection{Outline}
The next section is devoted to a careful consideration of the universal examples of Bousfield and Kan. In particular, we give a basis for the levelwise homology (Proposition~\ref{P:homologyofD}), a complete calculation of the spectral sequence (Proposition~\ref{P:univexamplecalc}), and the universal property (Proposition~\ref{P:universalprop}). The basis for the levelwise homology will be used in Section~\ref{S:e1} to give a basis for the $E^1$ page of the spectral sequence associated to $\e(D_{rst})$, which is the spectral sequence we spend much of the remainder of the paper calculating. We explore some consequences of the dual Eilenberg-Zilber theorem for cosimplicial chain complexes in Section~\ref{S:ez}, which we will need for the calculation of $E^2$ in Sections~\ref{S:isolationofrows}--\ref{S:e2}. Sections~\ref{S:einf} and \ref{S:autodiff} are dedicated to a computation of the higher differentials in this spectral sequence.

Having completed the calculation of the spectral sequence associated to $\e(D_{rst})$, we give a definition in Section~\ref{S:products} of the external operations on the level of cycles. The basic idea is that an $r$-cycle $y\in Z^r_{-s,t}(Y)$ determines a map $\dmap_y: D_{rst} \to Y$ with $\imath \mapsto [y]$. We define the operations on $[y]$ as the images of certain generators of $E^r(\e(D_{rst}))$ (the bottom right part of Figure~\ref{F:e2orig} on page~\pageref{F:e2orig}) under the map induced by $\e(\dmap_y)$. It is not at all obvious from this construction (perhaps because it is not generally true) that these operations depend only on the class $[y] \in E^r_{-s,t}(Y)$, rather than on the $r$-cycle $y$, nor is it obvious that the operations are homomorphisms.
 In Section~\ref{S:sumsofex} we show that these cycle-level operations are additive, which we use finally in Section~\ref{S:opdefinf} to show that the cycle-level operations descend to spectral sequence operations in the sense of Theorem~\ref{T:turner}. The fact that the representing maps depend on our choice of $r$-cycle introduces some indeterminacy in this step, which is precisely where the `$w$' in the theorem statement comes in. 
 On page~\pageref{exampleonlastpage} we use the universal examples to show that this `$w$' is the best possible.

\section{Algebraic Bousfield-Kan Examples}

In this section we give an explicit description of the mod-2 homology spectral sequence associated to the Bousfield-Kan universal examples. 

We give two separate constructions of the $E^1$ page of these spectral sequences. The second, starting on page~\pageref{TEXT:directproofD}, gives a complete description of the spectral sequence and allows us to establish the universal property for the Bousfield-Kan universal examples. It is also relatively quick.

The first construction is more involved and only produces $E^1$. This relies on the observation (see \cite[3.1]{bk})
that, for a cosimplicial chain complex $Y$,
\[ C(H_t(Y))^s \cong H_t(CY^s), \]
where the latter term is isomorphic to $E^1_{-s,t}(Y)$.
The method we use is to first compute the levelwise homology $H_t(Y^s)$ and then calculate the (higher) coface maps $H_t(d^1), H_t(d^2), \dots, H_t(d^{s+1})$. This will tell us about the left-hand side of the above isomorphism.

The advantage to presenting a construction along these lines is two-fold. First, it provides good practice since we will use this method of calculation later to calculate $\e(Y)$. More importantly, the levelwise homology of $Y$ gives information about the levelwise homology of $\e(Y)$ (see section~\ref{S:maincalchomology}).

Most future calculations in this paper rely on the bases we choose here.

\subsection{Homology of the Skeleton of the \texorpdfstring{$p$}{p}-Simplex}

Let $\Delta^p$\label{SYM:Delta} denote the (normalized) simplicial chains for the standard simplicial model of the $p$-simplex. A basis for $\Delta^p$ in dimension $k$ is given by the set of ordered injections $[k]\hookrightarrow [p]$.

For a complex $C$, write $\sk_t(C)$ for the brutal truncation with \[ \sk_t(C)_k = \begin{cases} C_k & k \leq t \\ 0 & k>t. \end{cases} \] This notation is chosen because of its relation to the usual notion of simplicial skeleton: if $X$ is a simplicial set, then \[ \sk_t S_* X = S_* \sk_t X, \] where $S_* = N \K$ is the normalized simplicial $\K$-chains functor. 
Thus we wish to compute the homology of $\sk_t \Delta^p$, and since we always have \[ H_k(\sk_t C) = \begin{cases} 0 & k > t \\ Z_t (C) & k=t \\ H_k (C) & k <t\end{cases} \] 
we are left to understand $Z_t(\Delta^p)$. If $t>0$ then $Z_t(\Delta^p) = B_t(\Delta^p)$ and if $t=0$ then $Z_t(\Delta^p)$ is given by the collection of vertices $\K\set{[0]\hookrightarrow [p]}$.
In summary,
\[ H_k (\sk_t \Delta^p) = \begin{cases} 
B_t(\Delta^p) & k=t > 0 \\ \K\set{[0]\hookrightarrow [p]} & k=t=0 \\
\K & k=0, t> k \\
0 & \text{else.}
\end{cases} \]

We now give an explicit description for $H_t(\sk_t \Delta^p) = B_t(\Delta^p)$ when $t>0$. 

\begin{defn}
We already have $\Delta_r^p = \K\set{[r]\hookrightarrow [p]}$. Consider the set of \emph{based} injections
\[ \h^p_r = \setm{\ep}{\ep:[r]\hookrightarrow [p], \ep(0)=0}. \label{SYM:h} 
\]
\end{defn}


\begin{prop}\label{P:hiso}
The restriction of the differential
\[ \hd: \Delta_{t+1}^p \to \Delta_t^p \] for $t\geq 0$ 
gives an isomorphism
\[ \K \h^p_{t+1} \overset{\cong}{\to} B_t(\Delta^p). \]
\end{prop}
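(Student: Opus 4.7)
The plan is to exhibit the classical contracting homotopy on $\Delta^p$ based at vertex $0$, whose image lies precisely in $\K\h^p_\bullet$, and then read off both halves of the isomorphism directly from the resulting identity.

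First, I would define a $\K$-linear map $h\colon \Delta^p_r \to \Delta^p_{r+1}$ on the basis of injections by
\[
h(\ep) = \begin{cases} [0,\ep(0),\ep(1),\dots,\ep(r)] & \ep(0) > 0 \\ 0 & \ep(0) = 0. \end{cases}
\]
Two features are visible from the definition: the image of $h$ is contained in $\K\h^p_{r+1}$, and $h$ vanishes on $\K\h^p_r$.

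Next I would verify the chain-contraction identity $\hd h + h\hd = \mathrm{id}$ on $\Delta^p_r$ for every $r\geq 1$ by a short face-by-face computation, splitting on whether $\ep(0)=0$. When $\ep(0) = 0$, the only based face of $\ep$ is $d_0\ep$ (since dropping any $\ep(i)$ with $i\geq 1$ leaves $\ep(0) = 0$), so $h\hd\ep = \ep$ while $h\ep = 0$. When $\ep(0) > 0$, one checks $d_0 h(\ep) = \ep$ and $d_i h(\ep) = h(d_{i-1}\ep)$ for $i\geq 1$, yielding $\hd h(\ep) = \ep + h\hd\ep$.

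Given the contraction, the proposition follows in essentially one line each way. For injectivity, any $c \in \K\h^p_{t+1}$ satisfies $hc = 0$, so if in addition $\hd c = 0$ then $c = (\hd h + h\hd)c = 0$. For surjectivity, a boundary $b = \hd c \in B_t(\Delta^p)$ is realized as $b = \hd(hb)$: indeed $hb = h\hd c$ lies in $\K\h^p_{t+1}$ by the first property of $h$, and $\hd(h\hd c) = (\mathrm{id} - h\hd)\hd c = \hd c$ since $\hd^2 = 0$. There is no substantive obstacle; the only thing to watch is the degree hypothesis $t\geq 0$, which provides the bound $r = t+1 \geq 1$ that the contracting homotopy identity requires.
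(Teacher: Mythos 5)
Your proposal is correct and uses the same contracting homotopy that the paper calls $S$ (extension by vertex $0$, vanishing on chains already containing $0$), with the same surjectivity argument reading off $b = \hd(h b)$ from the identity $\hd h + h\hd = \mathrm{id}$. The one place you do slightly better than the paper is injectivity: you apply the contraction identity to $c$ itself in degree $t+1 \geq 1$, where it is always valid, using that $h$ vanishes on $\K\h^p_{t+1}$; the paper instead writes $c = S\ep$ and applies the identity to $\ep$ in degree $t$, which at $t = 0$ lies outside the range where the paper establishes $\hd S + S\hd = \mathrm{id}$ (the computation still goes through because the error term is killed by the outer $S$, but the paper does not flag this). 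Your phrasing sidesteps that edge case cleanly; otherwise the two proofs are the same.
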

\begin{proof}
We define an operator $S:\Delta_t^p \to \Delta_{t+1}^p$ whose image is $\K \h^p_{t+1}$. This operator takes a basis element $\ep: [t] \hookrightarrow [p]$ to zero if $0\in \im \ep$, or, if $0\notin \im \ep$, adds zero to its image so that $\ep$ decomposes as \[ [t] \overset{d^0}\rightarrow [t+1] \overset{S(\alpha)}\longrightarrow [p].\]
Then $S^2=0$ and $\hd S + S\hd = \id_{[t]}$ for $t>0$.

To show surjectivity of $\hd: \im S \to B_t(\Delta^p)$, consider a boundary $\hd \ep$ (with $\ep: [t+1] \hookrightarrow [p]$ for some $t+1 \geq 1$). Then
\[ \hd \ep = \hd (\hd S \ep + S\hd \ep) = \hd (S\hd \ep) \]
is in the image of $\hd|_{\im S}$.

To show injectivity, suppose $S\ep: [t+1] \hookrightarrow [p]$ is in $\im S=\K \h_{t+1}^p$. If $\hd (S\ep) =0$, then
\[ S\ep = S(S\hd \ep + \hd S \ep) = S^2 \hd \ep + S(\hd (S \ep)) = 0.\]
%
%
\end{proof}

\begin{prop}\label{P:homtskel}
The homology of the $t$-skeleton of the standard $p$-simplex is given by
\[ H_k (\sk_t \Delta^p) \cong \begin{cases} 
\K\h_{t+1}^p & k=t > 0 \\ \K\set{[0]\hookrightarrow [p]} & k=t=0 \\
\K & k=0, t> k \\
0 & \text{else.}
\end{cases} \]
\end{prop}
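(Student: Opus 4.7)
The plan is to recognize that this proposition is essentially a repackaging of the case analysis given immediately before Proposition~\ref{P:hiso}, combined with the isomorphism $\K\h^p_{t+1}\cong B_t(\Delta^p)$ provided by that proposition. Nothing new needs to be proved; I would simply assemble these pieces into a single statement.

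First I would quote the case analysis from the preceding paragraph: for $k>t$ the truncated complex has nothing in that degree, so $H_k=0$; for $k<t$ the homology agrees with that of $\Delta^p$, which is $\K$ in degree zero and vanishes otherwise (because $\Delta^p$ is contractible); for $k=t>0$ we have $H_t(\sk_t \Delta^p)=Z_t(\Delta^p)=B_t(\Delta^p)$ (using acyclicity of $\Delta^p$ in positive degrees); and for $k=t=0$ there are no incoming differentials so $H_0(\sk_0\Delta^p)=Z_0(\Delta^p)=\K\set{[0]\hookrightarrow [p]}$.

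Then I would apply Proposition~\ref{P:hiso} to the one nontrivial case $k=t>0$, substituting $\K\h^p_{t+1}$ for $B_t(\Delta^p)$. All the other cases in the displayed case expression already match the preceding analysis verbatim, so no further work is required.

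There is essentially no obstacle here: the real content was packed into Proposition~\ref{P:hiso} (constructing the splitting operator $S$ to exhibit an explicit basis of $t$-boundaries coming from based injections). The only thing to be careful about is bookkeeping the degree shift, namely that $\h^p_{t+1}$ consists of injections $[t+1]\hookrightarrow [p]$ based at $0$, whose differentials give a basis of $B_t(\Delta^p)$ and hence of $H_t(\sk_t\Delta^p)$ when $t>0$.
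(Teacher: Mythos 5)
Your proposal is correct and matches the paper's approach exactly: the paper states the case analysis for $H_k(\sk_t C)$ in terms of $Z_t$ and $H_k(C)$, specializes to $\Delta^p$ using its acyclicity, and then invokes Proposition~\ref{P:hiso} to replace $B_t(\Delta^p)$ with $\K\h^p_{t+1}$, giving Proposition~\ref{P:homtskel} without a separate proof. Nothing further is needed.
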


\subsection{Homology Spectral Sequence of the Bousfield-Kan Examples}\label{S:examplehomology}

Fix $r\geq 1$, $s,t\geq 0$, and define $D^p=D^p_{rss}$ as the cokernel of the inclusion
\[\sk_{s-1} \Delta^p \hookrightarrow \sk_{s+r-1} \Delta^p . \label{SYM:D}\]
The cosimplicial structure of $D^\bullet$ is induced from that of $\Delta^\bullet$. 
It is not hard to see, for $t\geq s$, that  \begin{equation} D_{rst} := \Sigma^{t-s} D_{rss} \cong N \K \bkspace_{(r,s,t)}, \label{E:D}\end{equation} where $\bkspace_{(r,s,t)}$ is the cosimplicial space defined in \cite[5.1]{bk} and 
\eqref{E:boldD}, $N$ is the normalization functor $\K\operatorname{Mod}^{\Delta^{op}} \to \ch$, and $\Sigma$ is the suspension of chain complexes:
\[ (\Sigma C)_{q+1} = C_{q}. \]

\begin{prop}\label{P:homologyofD}
For $s\geq 0$ and $r\geq 2$, the homology of $D_{rss}^p$ is given by
\[ H_k(D_{rss}^p) \cong \begin{cases}
\K  \h_{s+r}^p & k=s+r-1 \\
\K \h_s^p & k=s \\
0 & \text{else.}
 \end{cases} \]
\end{prop}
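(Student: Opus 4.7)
The plan is to compute $H_*(D^p_{rss})$ directly from its explicit chain-complex structure, reducing everything to Proposition~\ref{P:hiso}. By construction, $D^p_{rss}$ is concentrated in degrees $k \in [s, s+r-1]$, with $D^p_k = \Delta^p_k = \K\{[k] \hookrightarrow [p]\}$ in that range, and the differential inherited from $\Delta^p$ (the differential out of degree $s$ is killed by the quotient, since $D^p_{s-1}=0$). The hypothesis $r\geq 2$ guarantees that the top and bottom degrees are distinct.

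For the top degree $k = s+r-1$, the complex terminates, so
\[ H_{s+r-1}(D^p_{rss}) = Z_{s+r-1}(\Delta^p) = B_{s+r-1}(\Delta^p) \]
by acyclicity of $\Delta^p$ in positive degrees, and Proposition~\ref{P:hiso} identifies this with $\K\h^p_{s+r}$ via $\hd$. In the middle degrees $s < k < s+r-1$ (vacuous when $r=2$), both cycles and boundaries coincide with those of $\Delta^p$, so the homology vanishes.

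It remains to show $H_s(D^p_{rss}) = \Delta^p_s / B_s(\Delta^p) \cong \K\h^p_s$, which I will accomplish by exhibiting a direct-sum decomposition $\Delta^p_s = \K\h^p_s \oplus B_s(\Delta^p)$. Split $\Delta^p_s = \K\h^p_s \oplus A$, where $A = \K\{\epsilon : [s]\hookrightarrow [p],\ \epsilon(0) \neq 0\}$. For each $\alpha \in \h^p_{s+1}$, the face $d^0 \alpha = [\alpha(1), \dots, \alpha(s+1)]$ lies in $A$ (since $\alpha(1) > \alpha(0)=0$), while for $i \geq 1$ the face $d^i \alpha$ still begins with $\alpha(0) = 0$ and so lies in $\K\h^p_s$. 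The restriction $d^0 : \K\h^p_{s+1} \to A$ is a bijection (invert by prepending $0$), so Proposition~\ref{P:hiso} yields $B_s(\Delta^p) = \hd(\K\h^p_{s+1})$ as the graph of the linear map $A \to \K\h^p_s$ sending $d^0 \gamma \mapsto \sum_{i \geq 1} d^i \gamma$. Such a graph is complementary to $\K\h^p_s$, so the inclusion $\K\h^p_s \hookrightarrow \Delta^p_s$ descends to the desired isomorphism.

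No substantive obstacle is expected: the argument reduces to unpacking the definition of $D^p_{rss}$ together with the splitting above, and it works uniformly for all $s \geq 0$. The only thing to watch is that the dimension count for $d^0:\K\h^p_{s+1}\to A$ lines up with the identification $B_s(\Delta^p)\cong \K\h^p_{s+1}$ of Proposition~\ref{P:hiso}; this will be immediate from the explicit prepending bijection and needs no separate verification at the edge case $s=0$.
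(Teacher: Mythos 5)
Your proof is correct, and it takes a genuinely different route from the paper's. The paper's proof works from the short exact sequence of complexes $0 \to \sk_{s-1}\Delta^p \to \sk_{s+r-1}\Delta^p \to D^p_{rss} \to 0$, runs the long exact homology sequence, and reads off the answer using Proposition~\ref{P:homtskel}; this forces a separate treatment of $s=1$, because $H_0(\sk_0\Delta^p)$ is $\K\set{[0]\hookrightarrow[p]}$ (dimension $p+1$) rather than $\K\h^p_s$, so the connecting map must be analyzed by hand in that case. You instead read $H_*(D^p_{rss})$ off the truncated chain complex directly: acyclicity of $\Delta^p$ in positive degrees disposes of the middle range and reduces the top degree to Proposition~\ref{P:hiso}, and for the bottom degree you produce an explicit complement $\Delta^p_s = \K\h^p_s \oplus B_s(\Delta^p)$ by observing that $d_0$ restricts to a bijection $\h^p_{s+1}\to A$ (invertible by prepending $0$, the same operator $S$ used in the proof of Proposition~\ref{P:hiso}), so that $\hd(\K\h^p_{s+1})$ is the graph of a linear map $A\to\K\h^p_s$. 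What your approach buys is uniformity — no case split at $s=1$ — and a concrete chain-level description of the isomorphism $\K\h^p_s\hookrightarrow \Delta^p_s \twoheadrightarrow H_s(D^p)$, which is the identification implicitly used later in the paper. The one point worth making explicit is where $r\geq 2$ enters: it guarantees $D^p_{s+1}=\Delta^p_{s+1}$ (so $B_s(D^p)=B_s(\Delta^p)$) and that the top degree $s+r-1$ is positive and distinct from $s$; you note the second of these, and the first is implicit in your setup, but it would not hurt to flag it.
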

\begin{proof} There is a short exact sequence of complexes
\[ 0 \to \sk_{s-1} \Delta^p \to \sk_{s+r-1} \Delta^p \to D^p \to 0 \] and, when $s\neq 1$, the result follows immediately from the associated long exact sequence and Proposition~\ref{P:homtskel}.
When $s=1$, the bottom map in the diagram
\[ \xymatrix{
0 \ar@{->}[r] & H_1 D^p_{r11} \ar@{->}[r] & H_0 \sk_0 \Delta^p \ar@{->}[r]\ar@{=}[d] & H_0 \sk_r \Delta^p \ar@{->}[r]\ar@{=}[d] & H_0 D^p_{r11} \ar@{->}[r] & 0 \\
& & \K\set{[0]\hookrightarrow [p]} \ar@{->}[r] & \K & &
}\]
is surjective and $\K \h_1^p$ is a $p$-dimensional vector space.

\end{proof}

\begin{rem}
The statement of this proposition is not true for $r<2$. We will assume that $r\geq 2$ until section~\ref{S:sumsofex}, where we will momentarily need an easy calculation for $r=1$.
\end{rem}

We now reproduce a the mod-2 version of \cite[5.3, (i)--(iii)]{bk} using Proposition~\ref{P:homologyofD}. Namely, we compute the $E^1$ page of the spectral sequence of $D_{rss}$. We do this using the fact (from \cite[3.1]{bk}) that \[ E^1_{-p,q}(D_{rss}) = H_q C(D_{rss})^p \cong C(H_q(D_{rss}))^p. \] There isn't an obvious way to obtain information about the differentials from this isomorphism, so we will not prove \cite[5.3, (iv)]{bk} using this method. The answer is given in Figure~\ref{F:uepre} (see also Figure~\ref{F:ue}).
\begin{figure}[ht]
\centering
\begin{tikzpicture}
    \draw[->] (0.2,0) -- (-4,0);
    \draw[->] (0,-0.2) -- (0,3);
    \draw (-1,2pt) -- (-1,-2pt) node[below] {$-s$};
    \draw (-3,2pt) -- (-3,-2pt) node[below] {$-(s+r)$};
    \draw (2pt,1) -- (-2pt,1) node[right] {$t$};
    \draw (2pt,2) -- (-2pt,2) node[right] {$t+r-1$};
    \fill (-1,1) circle (2pt)  ;
    \fill (-3,2) circle (2pt) ;
\end{tikzpicture}
\caption{Page $1$ of $D_{rst}$}\label{F:uepre}
\end{figure}

We want to compute a basis for $CHD_{rss}$ (noting that $CHD_{rst}$ is obtained from this by suspension). For $j>0$, the coface map $d^j$ takes elements of $\h_t^p$ to elements of $\h_t^{p+1}$. 
Applying conormalization, we find
\begin{align*} CH_{s}(D_{rss})^p &= \K \h_s^p / \left( d^1 \K \h_s^{p-1} + \dots + d^p \K \h_s^{p-1} \right) \\ &\cong \K\setm{\ep}{\ep:[s]\hookrightarrow [p], \ep(0)=0, [1,p] \ci \im \ep} \\ &=  \K\set{\id_{[s]}} \\ 
CH_{s+r-1}(D_{rss})^p & \cong \K \h_{s+r}^p /  \left( d^1 \K \h_{s+r}^{p-1} + \dots + d^p \K \h_{s+r}^{p-1} \right) \\ & \cong \K\setm{ \ep}{\ep: [s+r] \hookrightarrow [p], \ep(0) =0, [1,p] \ci \im \ep} \\ &= 
   \K \set{  \id_{[s+r]} } \\
 CH_k(D_{rss})^p &= 0 \qquad k\neq s, s+r-1. \end{align*} 
This is reflected in Figure~\ref{F:uepre}.
The separation of the two remaining classes means all intervening differentials $\ssd^1, \ssd^2, \dots, \ssd^{r-1}$ must be zero, so $E^1 = E^2 = \dots = E^r$.

Instead of using Proposition~\ref{P:homologyofD}\label{TEXT:directproofD}, we could conormalize $D_{rss}$ and get Figure~\ref{F:conormed}. The elements on the line with $y$-intercept 0 are $\id_{[s]}, \dots, \id_{[s+r-1]}$ and those on the line with $y$-intercept $-1$ are $\cd \id_{[s]}, \dots, \cd \id_{[s+r-1]}$. One can calculate that  \[ \cd\id_{[k]} = \hd \id_{[k+1]} \] in $C\Delta^\bullet$ since $\hd \id_{[k+1]} = \id_{[k+1]}d^0 = d^0 = \cd = \cd \id_{[k]}$.

\begin{figure}[ht]
\centering
\begin{tikzpicture}
    \draw[->] (0.2,0) -- (-4,0);
    \draw[->] (0,-0.2) -- (0,4);
    \draw (-.5,2pt) -- (-.5,-2pt) node[below] {$-s$};
    \draw (-1,2pt) -- (-1,-2pt); 
    \draw (-2.5,2pt) -- (-2.5,-2pt); 
    \draw (-3,2pt) -- (-3,-2pt); 
    \draw (2pt,.5) -- (-2pt,.5) node[right] {$s$};
    \draw (2pt,1) -- (-2pt,1) node[right] {$s+1$};
    \draw (2pt,2.5) -- (-2pt,2.5) node[right] {$s+r-2$};
    \draw (2pt,3) -- (-2pt,3) node[right] {$s+r-1$};
    \fill (-.5,.5) circle (2pt);
    \fill (-1,1) circle (2pt);
    \fill (-2.5,2.5) circle (2pt);
    \fill (-3,3) circle (2pt);
    \fill (-1,.5) circle (2pt);
    \fill (-1.5, 1) circle (2pt);
    \fill (-2.5,2) circle (2pt);
    \fill (-3,2.5) circle (2pt);
    \fill (-3.5,3) circle (2pt);
    \draw [->] (-.6,.5) -- (-.9,.5);
    \draw [->] (-1.1,1) -- (-1.4,1);
    \draw [->] (-2.1,2) -- (-2.4,2);
    \draw [->] (-2.6,2.5) -- (-2.9,2.5);
    \draw [->] (-3.1,3) -- (-3.4,3);    
    \draw [->] (-1,.9) -- (-1,.6);
    \draw [->] (-1.5,1.4) -- (-1.5,1.1);
    \draw [->] (-2.5,2.4) -- (-2.5,2.1);
    \draw [->] (-3,2.9) -- (-3,2.6);
    \draw [dotted] (-1.5,1.5)--(-2,2);
    \draw [dotted] (-1.8,1.3)--(-2.2,1.7);
\end{tikzpicture}
\caption{The Bicomplex $C(D_{rss})$}\label{F:conormed}
\end{figure}

\begin{prop}\label{P:univexamplecalc} If $2\leq r \leq \infty$, then
\[ E^1_{p,q}(D_{rst}) = E^r_{p,q}(D_{rst}) = \begin{cases} \K & (p,q) = (-s,t) \text{ or } (-s-r, t+r-1) \\ 0 & \text{otherwise.}\end{cases} \]
If, furthermore, $r < \infty$, then $E^{r+1}(D_{rst}) = 0$. Finally, 
\[ H_k(\Tot CD_{rst}) = \begin{cases} \K & k = t-s \text{ and } r=\infty \\
0 & \text{otherwise,} \end{cases} \]
where $\Tot$ is the product totalization \eqref{E:bitot}.
\end{prop}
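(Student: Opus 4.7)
The discussion preceding the proposition has already done most of the work for the first claim: the isomorphism $E^1_{-p,q}(Y) \cong C(H_q Y)^p$ together with Proposition~\ref{P:homologyofD} identifies $E^1(D_{rss})$ as a two-term object with generators $\imath=\id_{[s]}$ at $(-s,s)$ and $\cd\id_{[s+r-1]}$ at $(-s-r,s+r-1)$. Since these are the only nonzero positions, the differentials $\ssd^1,\dots,\ssd^{r-1}$ all vanish for bidegree reasons (their source or target is zero), giving $E^1=E^2=\cdots=E^r$. The statement for $D_{rst}$ then follows by suspending by $t-s$.

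The heart of the argument is the explicit computation of $\ssd^r$ via a finite telescoping zig-zag in the bicomplex $CD_{rss}$ of Figure~\ref{F:conormed}. The plan is to set
\[ z = \sum_{j=0}^{r-1} \id_{[s+j]} \]
and compute $\partial z$ using the identities $\hd \id_{[s]} = 0$ (the lower skeleton has been quotiented out) and $\cd \id_{[k]} = \hd \id_{[k+1]}$ for $s \leq k \leq s+r-2$ (as noted in the paragraph before Proposition~\ref{P:univexamplecalc}). Since $\id_{[s+r]}$ lies outside $D_{rss}$, there is no analogous cancellation for the top term, and everything telescopes to $\partial z = \cd \id_{[s+r-1]}$. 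This identifies $\ssd^r[\imath]$ with the generator $[\cd\id_{[s+r-1]}]$ of $E^r_{-s-r,s+r-1}$, so $\ssd^r$ is an isomorphism $\K \to \K$ and $E^{r+1}=0$.

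For the statement about $H_*(\Tot CD_{rst})$, the plan splits by cases. When $r<\infty$, the bicomplex $CD_{rst}$ is concentrated in a bounded strip (rows $t \leq q \leq t+r-1$ and columns $-s-r \leq -p \leq -s$), so the spectral sequence of the column filtration converges strongly to $H_*(\Tot CD_{rst})$; since $E^\infty = E^{r+1} = 0$, the abutment vanishes. When $r = \infty$, the spectral sequence has a single surviving entry $\K$ at $(-s,t)$ in $E^1=E^\infty$; passing to the abutment gives $H_{t-s}(\Tot CD_{\infty,s,t})=\K$ in the claimed degree and zero elsewhere.

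\textbf{Main obstacle.} The zig-zag in step two is bookkeeping and should go through cleanly once the relation $\cd\id_{[k]} = \hd\id_{[k+1]}$ is in hand. The subtler point is convergence of the column spectral sequence in the $r=\infty$ case, where the bicomplex has unbounded rows. I expect this to be handled by the standard fact that the column filtration on a product totalization of a left-plane bicomplex is complete and exhaustive, giving strong convergence here because the $E^1$ page is concentrated in a single column; if a soft appeal feels unsatisfying, one can instead compute $H_*(\Tot CD_{\infty,s,s})$ directly by a telescoping argument analogous to step two, solving the system $a_p + a_{p-1} = b_p$ in each degree to identify cycles and boundaries.
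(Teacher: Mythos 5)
Your proposal is correct and follows essentially the same approach as the paper: identify $E^1$ from the conormalized bicomplex of Figure~\ref{F:conormed} (equivalently via $CH_*$), observe the unique $r$-cycle $\imath=\sum_{k=s}^{s+r-1}\id_{[k]}$ in total degree zero, and compute $\partial\imath=\cd\id_{[s+r-1]}$ (or $0$ when $r=\infty$) by telescoping with the relation $\cd\id_{[k]}=\hd\id_{[k+1]}$. The only stylistic difference is in the $H_*(\Tot)$ claim, where you invoke strong convergence of the column spectral sequence (bounded strip for $r<\infty$; single nonzero column for $r=\infty$), whereas the paper reads off cycles and boundaries of $\Tot CD_{rss}$ directly from Figure~\ref{F:conormed} --- your fallback suggestion of solving $a_{p+1}+a_p=b_p$ degree by degree is in fact what the paper's terse proof amounts to.
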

\begin{proof}
This essentially follows from Figure~\ref{F:conormed} in the case $s=t$ and by iterated suspension otherwise.
There is exactly one $r$-cycle in total degree $0$, namely  \begin{equation} \imath = \sum_{k=s}^{s+r-1} \id_{[k]}. \label{E:iota}\end{equation} Applying the total differential, we have $\partial \imath = \cd \id_{[s+r-1]}$ (or $0$ if $r=\infty$). 
\end{proof}

This proof tells us why $E^{r+1} D_{rss} = 0$ for $r<\infty$, which we did not show using the first construction. Thus, the spectral sequence is as in Figure~\ref{F:ue}.

\begin{figure}[ht]
\centering
\begin{tikzpicture}
    \draw[->] (0.2,0) -- (-4,0);
    \draw[->] (0,-0.2) -- (0,3);
    \draw (-1,2pt) -- (-1,-2pt) node[below] {$-s$};
    \draw (-3,2pt) -- (-3,-2pt) node[below] {$-(s+r)$};
    \draw (2pt,1) -- (-2pt,1) node[right] {$t$};
    \draw (2pt,2) -- (-2pt,2) node[right] {$t+r-1$};
    \fill (-1,1) circle (2pt) node[right] {$\imath$};
    \fill (-3,2) circle (2pt) node[left] {$\ssd^r(\imath)$};
    \draw [->] (-1.2,1.1) -- (-2.8,1.9);
\end{tikzpicture}
\caption{Pages $2$ through $r$ of $D_{rst}$}\label{F:ue}
\end{figure}

\begin{prop}[Universal Property]\label{P:universalprop}
Let $Y$ be a cosimplicial chain complex and $y\in Z_{-s,t}^r(Y)$. Then there is a map \[ \dmap_y : D_{rst} \to Y \] with \begin{align*} E^r (\dmap_y) (\imath) = [y] && E^r(\dmap_y) (\cd \id_{[s+r-1]}) = \ssd^r [y]. \end{align*}
\end{prop}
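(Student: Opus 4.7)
The strategy is to construct $\dmap_y$ at the conormalized level and invoke the (dual) Dold--Kan equivalence to lift it to cosimplicial chain complexes. I would first reduce to the case $t = s$: since $D_{rst} = \Sigma^{t-s} D_{rss}$ and suspension is an auto-equivalence of $\ch$, a cosimplicial chain map $D_{rst} \to Y$ is the same as a map $D_{rss} \to \Sigma^{s-t} Y$, and $y \in Z^r_{-s,t}(Y)$ corresponds to an $r$-cycle at bidegree $(-s,s)$ in $\Sigma^{s-t} Y$.

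For the $t = s$ case, I would represent $y$ by its components $(y_p)_{p \geq s}$ with $y_p \in CY^p_p$. Modifying $y$ by an element of $F^{-s-r}$ gives a $\sim_r$-equivalent cycle, so I may assume $y_p = 0$ for $p \geq s+r$. The $r$-cycle condition is then the zig-zag $\hd y_s = 0$ and $\hd y_p = \cd y_{p-1}$ for $s < p \leq s+r-1$, and $\partial y = \cd y_{s+r-1}$. From the discussion around Figure~\ref{F:conormed}, $CD_{rss}$ has as basis the elements $\id_{[p]}$ in bidegree $(-p,p)$ for $s \leq p \leq s+r-1$ and $\cd\id_{[p-1]}$ in bidegree $(-p,p-1)$ for $s+1 \leq p \leq s+r$, whose only nontrivial vertical differential is $\hd \id_{[p]} = \cd \id_{[p-1]}$. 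The assignments $f(\id_{[p]}) = y_p$ and $f(\cd \id_{[p-1]}) = \cd y_{p-1}$ then define a bicomplex map $f : CD_{rss} \to CY$: commutation with $\hd$ is the zig-zag, and commutation with $\cd$ follows from $\cd^2 = 0$.

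Applying the (dual) Dold--Kan equivalence $C : (\ch)^\Delta \to \coch^{\geq 0}(\ch)$, an equivalence of abelian categories, lifts $f$ to the required cosimplicial chain map $\dmap_y : D_{rss} \to Y$. Because the spectral sequence depends only on the conormalized bicomplex, the verification that $E^r(\dmap_y)(\imath) = [y]$ and $E^r(\dmap_y)(\cd\id_{[s+r-1]}) = \ssd^r[y]$ is then immediate from tracing representatives: $\imath = \sum_p \id_{[p]} \mapsto \sum_p y_p = y$, and $\cd\id_{[s+r-1]} \mapsto \cd y_{s+r-1}$, which represents $\ssd^r[y] = [\partial y]$. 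The main obstacle will be the appeal to Dold--Kan; a more hands-on alternative would define $\dmap_y(\alpha) = \alpha_* \tilde y_k$ on generators $\alpha : [k] \hookrightarrow [p]$ of $D_{rss}^p$, but this forces lifts $\tilde y_k \in Y^k_k$ to satisfy $\hd \tilde y_k = \sum_{i=0}^k d^i \tilde y_{k-1}$ on the nose in $Y^k$ (not merely modulo conormalization), which is subtler to arrange.
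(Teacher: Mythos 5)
Your construction coincides with the paper's own proof: both define $C(\dmap_y)$ on the basis $\{\id_{[p]},\cd\id_{[p]}\}$ of $CD_{rst}$ by $\id_{[p]}\mapsto y_p$ and $\cd\id_{[p]}\mapsto \cd y_p$, verify the bicomplex-map condition from the zig-zag identities $\hd y_s=0$, $\hd y_p=\cd y_{p-1}$ implied by $\partial y\in F^{-s-r}$, and then invoke Dold--Kan (which the paper also does, so the worry at the end is unfounded). The only cosmetic differences are that you first truncate $y$ above filtration $s+r-1$ (harmless, since the discarded tail lies in $Z^{r-1}_{-s-1,t+1}\subseteq B^r_{-s,t}$) and reduce to $t=s$ by suspension, whereas the paper leaves $y$ untruncated and carries the $\Sigma^{t-s}$ shift explicitly; the resulting map $\dmap_y$ is the same.
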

We will be using the definition of $\dmap_y$ frequently.
\begin{defn}[Representing Map]
Let \[ y\in Z_{-s,t}^r (Y) \ci F^{-s} \Tot C(Y)_{t-s} = \prod_{p\geq s} CY^p_{t-s+p} \] which we write as
\[ y= \sum_{k=0}^\infty y_{t+k}^{s+k} \qquad \qquad y_q^p \in C(Y)_q^p. \]
Define $C(\dmap_y)$ by
\begin{align*}
\Sigma^{t-s} \id_{[s+k]} & \mapsto y_{t+k}^{s+k} \\
\Sigma^{t-s} \cd \id_{[s+k]} & \mapsto \cd y^{s+k}_{t+k}.
\end{align*} The following proof will show that this is a map of bicomplexes, so $C(\dmap_y)$ gives $\dmap_y$ by the Dold-Kan theorem.
\end{defn}
\begin{proof}[Proof of Proposition~\ref{P:universalprop}]
Since $\partial y \in F^{-s-r}$ we know that
\[ \cd y_{t+k}^{s+k} = \hd y_{t+k+1}^{s+k+1} \] for $0\leq k \leq r-2$, which shows that $C(\dmap_y)$ is a map of bicomplexes. Furthermore, using the notation from page~\pageref{NOTATION:SS}, we have
\[ C(\dmap_y) \imath = \sum_{k=0}^{r-1} y_{t+k}^{s+k} \sim_r \sum_{k=0}^\infty y_{t+k}^{s+k} = y \] and
\[ C(\dmap_y) \partial \imath = \cd y_{t+r-1}^{s+r-1}.\] 
\end{proof}


\section{The \texorpdfstring{$E^1$}{E-1} Page}\label{S:e1}

We are interested in the spectral sequence associated to the cosimplicial chain complex
$ \e(D_{rst})$ (where $\e$ is a homotopy orbit complex, see \eqref{E:horbit}).  Note that $\e(D_{rst}) \cong \Sigma^{2t-2s} \e(D_{rss})$ where the suspension is taken levelwise, so it is enough to understand the spectral sequence for $\e(D_{rss})$. Let $Y$ be a cosimplicial chain complex; following \cite[3.1]{bk},
$E^1_{-p}(Y)$  is isomorphic to  $C(H_*Y)^p$. We note that the isomorphism $CH_*(-) \cong H_*C(-)$ is an isomorphism of complexes of graded modules. It is not true in general that the spectral sequence associated to $Y$ is isomorphic to the spectral sequence associated to $H_*(Y)$.

Fix $r$ and $s$; we now turn to computing $C(H_*\e(D_{rss}))$. Below we often use the abbreviation
$ D^p = D^p_{rss}$.
\subsection{Homology}\label{S:maincalchomology}

To compute $H_*(\e(D_{rss}))$, first notice that, for a map of chain complexes $C \to C'$ and choices of quasi-isomorphisms $H(C) \to C$ and $H(C') \to C'$, the diagram
\[ \xymatrix{ 
H(C) \ar@{->}[r] \ar@{->}[d] & C \ar@{->}[d] \\
H(C') \ar@{->}[r] & C'
}\]
commutes up to homotopy. It follows from \cite[Lemma 1.1]{may} that we have a \emph{natural transformation}
\[ H_* (\e( H_*(-))) \to H_*(\e(-)) \]
which is an isomorphism for each object.
Thus
 \[ H_* \Big( \e(D_{rss}^p)\Big) \cong H_*\Big( \e( H_*(D_{rss}^p)) \Big) \] and this is an isomorphism of cosimplicial graded modules.

Once we have made this change, notice that, for a $\K\pi$-module $M$ (such as $H_*(D^p) \ten H_*(D^p)$), the complex $W\tp M$ is just 
\[ \cdots M \overset{1+\sigma}{\longrightarrow} M \overset{1+\sigma}{\longrightarrow} M \overset{1+\sigma}{\longrightarrow} M \to 0 .\] Thus the homology is $M/(1+\sigma)$ in the bottom dimension and $\ker(1+\sigma) / \im(1+\sigma)$ in dimensions bigger than zero. 
This essentially gives the following special case of \cite[Lemma 1.3]{may}:
\begin{lem}\label{L:mayonethree}
Let $K$ be a $\K$-module with totally ordered basis $\setm{x_j}{j\in J}$. Let $A\ci K \ten K$ have basis $\setm{x_j \ten x_j}{j\in J}$ and $B\ci K\ten K$ have basis $\setm{x_{j_1} \ten x_{j_2}}{j_1 < j_2, \text{where }j_1, j_2 \in J}$. Then
\[ H(W\tp (K\ten K)) \cong \left( \bigoplus_{i=0}^\infty e_i \ten A \right) \oplus (e_0 \ten B).\]
\end{lem}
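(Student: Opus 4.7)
The plan is to reduce the calculation to the explicit description of the complex $W\tp M$ given just before the lemma statement, and then to decompose $M = K\ten K$ as a $\K\pi$-module into its fixed part and a free part.

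First I would use the isomorphism $W_i \tp M \cong M$ that sends $e_i\ten m \leftrightarrow m$, which is valid because $W_i = \K\pi\cdot e_i$ is free of rank one as a $\K\pi$-module. Under this identification the differential $e_i\ten m \mapsto (1+\sigma)e_{i-1}\ten m = e_{i-1}\ten (1+\sigma)m$ becomes multiplication by $1+\sigma$, so the complex is exactly
\[ \cdots \overset{1+\sigma}{\longrightarrow} M \overset{1+\sigma}{\longrightarrow} M \overset{1+\sigma}{\longrightarrow} M \to 0, \]
and the homology in degree $0$ is $M/(1+\sigma)M$, while in every positive degree it is $\ker(1+\sigma)/\im(1+\sigma)$.

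Next I would split $M$ as a $\K\pi$-module. With the specified basis, $\sigma$ acts on $K\ten K$ by swapping tensor factors, so it fixes $A$ pointwise and permutes the complement $B \oplus \sigma B$ by exchanging $B$ and $\sigma B$. Thus $M = A \oplus (B\oplus \sigma B)$ with $A$ a trivial $\K\pi$-module and $B\oplus \sigma B$ free on the basis $B$. On $A$, the operator $1+\sigma$ is identically zero, so both kernel and image computations simply return $A$ in every degree. On the free summand $B\oplus \sigma B$, a direct check on basis elements $b, \sigma b$ with $b\in B$ shows $\ker(1+\sigma) = \im(1+\sigma) = \K\{b+\sigma b\}$, so this summand contributes nothing in positive degrees, while in degree $0$ the quotient $(B\oplus \sigma B)/\im(1+\sigma)$ has $B$ as a set of coset representatives.

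Assembling these, in each degree $i\geq 1$ the homology is (a copy of) $A$, represented by the classes $e_i\ten x_j\ten x_j$, and in degree $0$ the homology is $A\oplus B$, represented by $e_0\ten x_j\ten x_j$ together with $e_0\ten x_{j_1}\ten x_{j_2}$ for $j_1 < j_2$. This gives the stated isomorphism
\[ H(W\tp (K\ten K)) \cong \left(\bigoplus_{i=0}^\infty e_i\ten A\right) \oplus (e_0\ten B). \]
There is no real obstacle; the only thing to be careful about is bookkeeping the chosen representatives so that the isomorphism is the one indicated by the notation $e_i\ten A$ and $e_0\ten B$, rather than a mere abstract identification of vector spaces.
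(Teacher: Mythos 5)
Your proof is correct and takes essentially the same approach as the paper: the paper states exactly the same identification of $W\tp M$ with the explicit $(1+\sigma)$-complex just before the lemma and then cites \cite[Lemma 1.3]{may} for the conclusion, whereas you unwind that citation by decomposing $K\ten K$ into the trivial summand $A$ and the free summand $B\oplus\sigma B$ and computing $\ker(1+\sigma)/\operatorname{im}(1+\sigma)$ directly. This is a clean and complete fill-in of the details the paper leaves to the reference.
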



We choose an order for the basis $\h_s^p \sqcup \h_{s+r}^p$ of $H_*(D_{rss}^p)$ that we found in 
Proposition~\ref{P:homologyofD}.

\begin{defn}[Total Order]
We associate to $\ep: [m]\hookrightarrow [p]$ the word of length $p+1$, whose $i^{\text{th}}$ letter is $0$ if $i\notin \im \ep$ and $1$ if $i\in \im \ep$. 
For a fixed $m$ we declare the order on injections to be given by the reverse lexicographic order on their associated words. 
By Proposition~\ref{P:homologyofD}, we then have an induced order on $H_{s+r-1}(D^p)$ and on $H_{s}(D^p)$. We give $H_*(D^p)$ an order by declaring that $H_{s}(D^p) < H_{s+r-1}(D^p)$.
\end{defn}

We apply Lemma~\ref{L:mayonethree} with this totally ordered basis to see that the homology of $\e(H_*(D^p))$ has a basis given by the disjoint union of the following sets:
\begin{equation}\label{E:sets}
\begin{gathered} 
\setm{e_m\tensor \ep \tensor \ep}{\ep\in \h_s^p, m\in \N} \\
\setm{e_m \tensor \gamma \tensor  \gamma}{\gamma\in \h_{s+r}^p, m\in \N} \\
\setm{e_0 \tensor \ep \tensor \ep'}{\ep, \ep'\in \h_s^p, \ep < \ep'}\\
\setm{e_0 \tensor \ep \tensor \gamma}{\ep\in \h_s^p, \gamma\in \h_{s+r}^p}\\
\setm{e_0 \tensor \gamma \tensor \gamma'}{\gamma, \gamma'\in\h_{s+r}^p, \gamma < \gamma'}
\end{gathered}
\end{equation}
Each $\gamma$ appearing in the above tensor products stands for $\hd \gamma$ as in the isomorphism in Proposition~\ref{P:homologyofD}. 
%
This isomorphism is induced from $\hd$ (see Proposition~\ref{P:hiso}) and, for $k>0$, $d^k \hd \gamma = \hd d^k \gamma$. Thus we may use $d^k: \K\h_{s+r}^p \to \K\h_{s+r}^{p+1}$ for $k>0$ without worry.

\begin{lem}
For $k> 0$, the coface maps \[
d^k:\e(H_*(D^p)) \to \e(H_*(D^{p+1})) \] respect the basis for $H_*(\e(H_*(D^\bullet)))$ given above (\ref{E:sets}). 
\end{lem}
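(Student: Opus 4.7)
The plan is to reduce the claim to a combinatorial statement about the action of the cosimplicial coface maps on the based injections $\h_m^p$ and then to verify that this action preserves the total order used to build the basis.

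First, I would observe that on $\e(H_*D^p) = W\tp(H_*D^p\ten H_*D^p)$ the coface maps act as the identity on the $W$ factor and diagonally on the tensor square, so $d^k(e_i\ten a\ten b)= e_i\ten d^k a\ten d^k b$. Thus it suffices to show that for $k>0$, $d^k$ takes each $\h_m^p$ (for $m\in\set{s,s+r}$, appearing via Proposition~\ref{P:homologyofD}) to $\h_m^{p+1}$, and is order preserving on it. The first point is immediate: $d^k:[p]\to[p+1]$ fixes $0$ when $k>0$, so for any $\ep:[m]\hookrightarrow[p]$ with $\ep(0)=0$ the injection $d^k\circ \ep$ still sends $0$ to $0$. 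For the $(s+r-1)$-dimensional summand we use the identification $\K\h_{s+r}^p\cong B_{s+r-1}(\Delta^p)$ of Proposition~\ref{P:hiso} together with the relation $d^k\hd=\hd d^k$ (for $k>0$), which the paper has already remarked.

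Second, I would work out the effect of $d^k$ on the word $w(\ep)\in\set{0,1}^{p+1}$ attached to $\ep$: since $d^k$ shifts positions $\geq k$ up by one and inserts an unused index at position $k$, the word $w(d^k\ep)$ is obtained from $w(\ep)$ by inserting a $0$ in position $k$. Because the same insertion is applied to every $\ep$, the reverse lexicographic order is preserved: if $w(\ep)$ and $w(\ep')$ first differ (reading from the right) at some position $i$, then $w(d^k\ep)$ and $w(d^k\ep')$ first differ at position $i$ if $i<k$ and at position $i+1$ if $i\geq k$, with the same letter assignments. Hence $\ep<\ep'$ implies $d^k\ep<d^k\ep'$ within each $\h_m^{p+1}$, and the global ordering $\h_s<\h_{s+r}$ is automatically preserved since $d^k$ does not change the homological degree.

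Combining these two points, $d^k$ carries each of the five types of basis elements in \eqref{E:sets} to a basis element of the same type, because it preserves the ``diagonal'' pairs $\ep\ten\ep$ and $\gamma\ten\gamma$ and, by the order-preservation argument, it carries a strictly-increasing pair $(\ep,\ep')$, $(\ep,\gamma)$, or $(\gamma,\gamma')$ to another strictly-increasing pair of the same shape.

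The main obstacle is really just the bookkeeping in the second step — making the reverse lexicographic comparison rigorous — but once one unpacks the definitions everything is mechanical, so I would expect this lemma to be essentially a careful exercise rather than a deep assertion.
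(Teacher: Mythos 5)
Your proof is correct and takes essentially the same approach as the paper: the paper's entire proof is the one-line observation that, on associated words, $d^k$ (for $k>0$) inserts a $0$ at position $k$, and this preserves reverse lexicographic order. You have simply spelled out the supporting details (diagonal action, preservation of basedness via $d^k(0)=0$, and the explicit case analysis for where the rightmost difference moves), all of which the paper leaves implicit.
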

\begin{proof}
In the word associated to an injection, the operator $d^k$ inserts a $0$ in the $k^{\text{th}}$ position, an operation which preserves the reverse lexicographic order.
\end{proof}

\subsection{Conormalization}\label{SS:conorm}
We just saw that $d^k$, $k>0$, sends basis elements in $H_*(\e(H_*(D^p)))$ to basis elements in $H_*(\e(H_*(D^{p+1})))$ via \begin{gather*} e_m \ten \ep \ten \ep' \mapsto e_m \ten d^k \ep \ten d^k \ep' \\ \ep, \ep' \in \h_{s}^p \cup \h_{s+r}^p \qquad m\geq 0. \end{gather*}
Thus the conormalization (see \eqref{E:conorm})
\[ CH_*\Big( \e( H_*(D^{p}))\Big) \] has a basis consisting of elements of the original basis which are not in the image of $d^k$ for $k=1,\dots, p$. 

\begin{thm}\label{T:e1basis} Let $2\leq r \leq \infty$ and $s\geq 0$. The $E^1$ page of the spectral sequence for the cosimplicial chain complex $\e(D_{rst})$ can be given a basis consisting of the following:

If $r=\infty$, then the basis consists of the disjoint union of the following two sets:
\begin{equation}\label{E:basisrinf}
\begin{aligned} &\setm{e_m \tensor \id_{[s]} \tensor \id_{[s]} \in E^1_{-s,2t+m}}{m\geq 0} \\
 &\setm{e_0 \tensor \ep \tensor \ep' \in E^1_{-p,2t}}{  \ep < \ep' \in \h_s^p \text{ and } [p]=\im \ep \cup \im \ep'}. \end{aligned} \end{equation} 
If $r< \infty$, then the basis is given by the disjoint union of the sets (\ref{E:basisrinf}) with the sets
\begin{align*}
&\setm{ e_m \tensor  \id_{[s+r]} \tensor  \id_{[s+r]} \in E^1_{-s-r,2t+2r+m-2}}{m\geq 0} \\
&\setm{ e_0 \tensor \ep \tensor \gamma \in E^1_{-p,2t+r-1}}{ \ep\in \h_s^p, \gamma\in \h_{s+r}^p \text{ and }[p]=\im \ep \cup \im \gamma} \\
&\setm{ e_0 \tensor  \gamma \tensor  \gamma' \in E^1_{-p,2t+2r-2}}{\gamma <\gamma' \in \h_{s+r}^p \text{ and } [p]=\im \gamma \cup \im \gamma'}.
\end{align*}

\end{thm}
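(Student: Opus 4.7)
The plan is to combine the three isomorphisms we already have in place, \[ E^1_{-p,q}(\e(D_{rst})) \;\cong\; H_q C(\e(D_{rst}))^p \;\cong\; CH_q(\e(D_{rst}))^p \;\cong\; CH_q(\e(H_*(D_{rst})))^p, \] with the suspension identification $\e(D_{rst}) \cong \Sigma^{2(t-s)} \e(D_{rss})$, so that throughout the argument I may work with $s = t$ (tracking the $2(t-s)$ degree shift at the very end). The last of the above three isomorphisms is precisely the natural quasi-isomorphism of homotopy orbit complexes from \cite[Lemma 1.1]{may}, applied levelwise; its naturality is what allows conormalization to be carried out on the $E^2$-of-homotopy-orbit complex.

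Next, I would feed the basis of $H_*(\e(H_*(D_{rss}^p)))$ produced by Lemma~\ref{L:mayonethree} (the five families in \eqref{E:sets}) into the conormalization functor. The preceding lemma shows that each coface $d^k$ with $k\geq 1$ permutes these basis vectors via $e_m\ten \alpha \ten \beta \mapsto e_m \ten d^k\alpha \ten d^k\beta$. Consequently the image of $\bigoplus_{k=1}^{p} d^k$ is the span of those basis elements in cosimplicial degree $p$ that are in the image of some $d^k$, and the quotient is the span of those that are not. So the task becomes: identify the basis elements $e_m \ten \alpha \ten \beta$ in degree $p$ such that for every $k \in \{1,\dots,p\}$ we do \emph{not} have $\alpha,\beta$ simultaneously in the image of $d^k$.

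Here the word-description of injections does the work. The operator $d^k$ inserts a $0$ in position $k$ of the word, so $\alpha$ (resp.\ $\beta$) lies in the image of $d^k$ exactly when the $k$-th letter of its word is $0$, i.e.\ $k \notin \im\alpha$ (resp.\ $k \notin \im\beta$). Hence $e_m\ten \alpha\ten\beta$ survives conormalization iff for every $k\in\{1,\dots,p\}$ we have $k\in \im\alpha\cup\im\beta$; together with $0\in\im\alpha\cap\im\beta$ (basepoint-preservation) this is exactly the condition $[p] = \im\alpha\cup\im\beta$. Applying this test family-by-family to \eqref{E:sets}: the diagonal families $e_m\ten\ep\ten\ep$ and $e_m\ten\gamma\ten\gamma$ survive only when $p=s$ and $p=s+r$ respectively (forcing $\ep=\id_{[s]}$, $\gamma=\id_{[s+r]}$), while the three off-diagonal families contribute exactly the stated surviving pairs $(\ep,\ep')$, $(\ep,\gamma)$, $(\gamma,\gamma')$ with $[p]=\im\alpha\cup\im\beta$.

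Finally I would read off the bidegrees. In $\e(H_*(D^p_{rss}))$ the element $e_m\ten\alpha\ten\beta$ has total degree $m + |\alpha| + |\beta|$, where $|\ep|=s$ and $|\gamma|=s+r-1$ by Proposition~\ref{P:homologyofD}; suspending by $2(t-s)$ to pass from $D_{rss}$ to $D_{rst}$ shifts total degree by $2(t-s)$, converting $2s\mapsto 2t$, $s+(s+r-1)\mapsto 2t+r-1$, and $2(s+r-1)\mapsto 2t+2r-2$, giving precisely the second coordinates claimed in the five families. The main obstacle in all of this is really bookkeeping rather than any conceptual difficulty: one has to be careful that the quasi-isomorphism $\e(H_*(-))\to\e(-)$ is natural enough on the cosimplicial level to commute with conormalization (this is where \cite[Lemma 1.1]{may} and the observation that the $d^k$ are induced from basepoint-preserving injective simplicial maps do the work), and that the suspension conventions match so that the $(-p,q)$ bidegrees line up.
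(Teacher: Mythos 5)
Your proof is correct and takes essentially the same route as the paper: both reduce to the $s=t$ case via the levelwise suspension, invoke $E^1_{-p}\cong CH_*(\e(H_*(D^p)))$ (via \cite[Lemma~1.1]{may} and Lemma~\ref{L:mayonethree}), observe that the cofaces $d^k$ for $k\geq 1$ permute the chosen basis so that conormalization kills exactly the basis elements lying in some $\im d^k$, and then translate the survival condition $[1,p]\subseteq\im\alpha\cup\im\beta$ into $[p]=\im\alpha\cup\im\beta$ using basepoint-preservation. The only difference is that you spell out the word-description argument for why $\alpha\in\im d^k \Leftrightarrow k\notin\im\alpha$, which the paper leaves implicit; the content and structure are the same.
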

\begin{proof}
In the spectral sequence associated to $\e(D_{rss})$, $E^1_{-p}\cong CH_*\Big( \e( H_*(D^{p}))\Big)$ has a basis consisting of elements of the original basis which are not in the image of $d^k$ for $k=1,\dots, p$. This basis is given by the disjoint union of the following sets:
\begin{gather*} 
\setm{e_m\tensor \ep \tensor \ep}{\ep\in \h_s^p, [1,p] \ci \im \ep, m\in \N} \\
\setm{e_m \tensor \gamma \tensor  \gamma}{\gamma\in \h_{s+r}^p,[1,p] \ci \im\gamma, m\in \N} \\
\setm{e_0 \tensor \ep \tensor \ep'}{\ep, \ep'\in \h_s^p,[1,p]\ci \im \ep \cup \im \ep', \ep < \ep'}\\
\setm{e_0 \tensor \ep \tensor  \gamma}{\ep\in \h_s^p, \gamma\in \h_{s+r}^p, [1,p]\ci \im \ep \cup \im \gamma}\\
\setm{e_0 \tensor  \gamma \tensor \gamma'}{\gamma, \gamma'\in \h_{s+r}^p,[1,p]\ci \im \gamma \cup \im \gamma' ,  \gamma < \gamma'}.
\end{gather*} Furthermore, recall that $\ep$ and $\gamma$ are injective maps and $0$ is in the image of each. This establishes the $s=t$ case of the theorem. Now apply the $(2t-2s)$-fold suspension in the vertical direction to this basis.
\end{proof}

A picture of the $E^1$ page is given in Figure~\ref{F:e1orig}, where we have indicated modules with rank greater than zero by snaky lines and modules of rank one with straight lines.
The reader is encouraged to compare this to the picture of $E^2$ given in  Figure~\ref{F:e2orig} on page~\pageref{F:e2orig}.

\begin{figure}[ht]
\centering
{
\begin{tikzpicture}
	\draw[->] (-.65,0) -- (-8,0);		
	\draw (0.2,0) -- (-.35,0);		
	\draw (-.75,-.1) -- (-.55,.1);	
	\draw (-.45,-.1) -- (-.25,.1);	
    \draw[->] (0,-0.2) -- (0,5);
    \draw (-1,2pt) -- (-1,-2pt) node[below=.01] {$-s$};
    \draw (-2.5,2pt) -- (-2.5,-2pt) node[below=.01] {$-s-r$};
    \draw (-4,2pt) -- (-4,-2pt) node[below=.01] {$-2s$};
    \draw (-5.5,2pt) -- (-5.5,-2pt) node[below=.01] {$-2s-r$};
    \draw (-7,2pt) -- (-7,-2pt) node[below left=.01 and -.6] {$-2s-2r$};
    \draw (2pt,1) -- (-2pt,1) node[right] {$2t$};
    \draw (2pt,2) -- (-2pt,2) node[right] {$2t+r-1$};
    \draw (2pt,3) -- (-2pt,3) node[right] {$2t+2r-2$};
    \draw [snake=snake,very thick] (-5.5,2) -- (-2.5,2);
    \draw [->, snake=snake,very thick] (-4,1) -- (-1,1) [snake=none] -- (-1,5);
    \draw [->, snake=snake,very thick] (-7,3) -- (-2.5,3) [snake=none]--  (-2.5,5);
\end{tikzpicture} 
}
\caption{$E^1(  \e(D_{rst}))$}\label{F:e1orig}
\end{figure}

\begin{rem}
Using the notation of Theorem~\ref{T:e1basis}, we note that that the basis elements $e_0 \ten \ep \ten \ep'$ live in cosimplicial degrees between $-s-1$ and $-2s$, $e_0 \ten \ep \ten \gamma$ live in cosimplicial degrees between $-s-r$ and $-2s-r$, and $e_0\ten \gamma \ten \gamma'$ live in cosimplicial degrees between $-s-r-1$ and $-2s-2r$.
\end{rem}

It is possible to give an explicit description of the differential $\ssd^1$ in terms of this basis. The interested reader may find formulas in \cite{mythesis}, but they are not needed for the calculation of $E^2$.

\section{Spectral Sequence of \texorpdfstring{$X\ten Y$}{X x Y}}\label{S:ez}
In this section we examine the spectral sequence associated to the tensor product of two cosimplicial chain complexes, anticipating applications for computing $E^2(\e(D_{rst}))$ (in section~\ref{S:isolationofrows}) and for the external product (in section~\ref{S:products}).

Let $X$ and $Y$ be cosimplicial chain complexes. 
There are two bicomplexes, $C(X) \ten C(Y)$ and $C(X\ten Y)$, which are readily associated to the pair. We now give natural transformations
\[ C(X) \ten C(Y) \rightleftarrows C(X\ten Y), \] 
which are the cosimplicial versions of the Alexander-Whitney and shuffle maps on normalized chains that arise when considering simplicial abelian groups, as in \cite[Corollaries VIII.8.6 and VIII.8.9]{homology}.
\begin{defn}[{\cite[Appendix]{bauesmuro}}]\label{D:awshuff}
The Alexander-Whitney map $AW$ is defined on $C(X)^p \ten C(Y)^q$ by
\begin{equation} AW(x^p\ten y^q) = d^{p+q} \cdots d^{p+1} x \ten d^{p-1} \cdots d^{0} y. \label{E:AW}\end{equation} The shuffle map $\nabla$ is defined on $C(X\ten Y)^n$ by
\[ \nabla(x^n \ten y^n) = \sum_{p+q=n} \sum_{\substack{ (p,q)-\text{shuffles}\\ \tau}} s^{\tau(p)} \cdots s^{\tau(p+q-1)} x \ten s^{\tau(0)} \cdots s^{\tau(p-1)} y \]
where we consider $(p,q)$-shuffles as permutations of the set \[ \set{0,1,\dots, p+q-1}. \]
\end{defn}

\begin{lem} The Alexander-Whitney map and shuffle map are maps of bicomplexes.
\end{lem}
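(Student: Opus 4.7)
The plan is to show that each of $AW$ and $\nabla$ descends to a well-defined map on the conormalized bicomplexes and commutes with both the vertical differential $d_v$ and the horizontal differential $d_h$.

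Compatibility with $d_v$ is essentially formal and I would dispatch it first. Since $X^p$ and $Y^q$ take values in $\ch$, every coface $d^i$ and codegeneracy $s^j$ is a chain map and so commutes with $d_v$. Working in characteristic $2$ kills all Koszul signs, so the tensor-product Leibniz rule combined with the fact that $AW$ and $\nabla$ are built from $d^i$'s and $s^j$'s applied factor-wise gives $d_v \circ AW = AW \circ d_v$ and $d_v \circ \nabla = \nabla \circ d_v$ term by term.

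The substantive content is commutation with $d_h$. For $AW$, the source differential is $d_h(x \ten y) = d^0 x \ten y + x \ten d^0 y$ and the target differential is $d^0 \ten d^0$. Pushing the outer $d^0$ through the factors using the cosimplicial identity $d^0 d^k = d^{k+1} d^0$ gives
\[ d^0 AW(x \ten y) = d^{p+q+1} d^{p+q} \cdots d^{p+2} d^0 x \ten d^p d^{p-1} \cdots d^0 y, \]
which exactly matches $AW(d^0 x \ten y)$. Thus everything reduces to showing that
\[ AW(x \ten d^0 y) = d^{p+q+1} \cdots d^{p+1} x \ten d^{p-1} \cdots d^0 d^0 y \]
vanishes in $C(X \ten Y)^{p+q+1}$. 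Applying $d^0 d^0 = d^1 d^0$ and then iterating $d^k d^k = d^{k+1} d^k$ propagates the extra face map outward, exhibiting $d^{p-1} \cdots d^0 d^0 y$ as $d^p(d^{p-1} \cdots d^0 y) \in \operatorname{im}(d^p)$, which is zero in the conormalization. The very same cosimplicial identities show that $AW$ descends to the conormalization, since inserting a $d^j$ into either factor produces an element in $\operatorname{im}(d^{j+q})$ (resp.\ a similar image) in $(X \ten Y)^{p+q}$.

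For $\nabla$ the strategy is parallel, with codegeneracy identities $d^0 s^j = s^{j-1} d^0$ (for $j \geq 1$) and $d^0 s^0 = \operatorname{id}$ playing the role of the face identities. Expanding $d^0 \nabla(x \ten y)$ and $\nabla d_h(x \ten y)$ as sums over $(p,q)$-shuffles and applying these identities produces a combinatorial cancellation: matched pairs of shuffles contribute equal terms (cancelling mod $2$), and the residual terms each acquire an outer codegeneracy in one of the factors and therefore vanish when projected into $C(X) \ten C(Y)$. I expect this shuffle bookkeeping to be the main obstacle; it is the cosimplicial analogue of the classical Eilenberg--Zilber proof and is essentially carried out in \cite[Appendix]{bauesmuro}, which I would invoke rather than reproduce.
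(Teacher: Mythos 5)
The paper's proof is a purely formal reduction: it quotes the standard dual Eilenberg--Zilber statement for cosimplicial $\K$-modules (that $AW$ and $\nabla$ are cochain maps on conormalizations), observes that since cofaces and codegeneracies of $X$ commute with the internal chain differential, $AW$ and $\nabla$ also commute with $d_v$, and then identifies $\ch^{\geq 0}(\ch)$ with bicomplexes. You instead attempt a direct cosimplicial-identity verification of $d_h$-commutation, a genuinely different route --- but one that, as carried out, has a gap.

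You assert that $AW(x\ten d^0 y)$ vanishes in $C(X\ten Y)^{p+q+1}$ because its second factor equals $d^p(d^{p-1}\cdots d^0 y)\in\operatorname{im}(d^p_Y)$. But the cofaces of $X\ten Y$ act diagonally: $d^p_{X\ten Y}=d^p_X\ten d^p_Y$, so membership in $\operatorname{im}(d^p_{X\ten Y})$ requires \emph{both} tensor factors to be $d^p$-images, and the first factor $d^{p+q+1}\cdots d^{p+1}x$ is generally not in $\operatorname{im}(d^p_X)$. (The same issue affects your well-definedness step: inserting $d^j$ into one factor must give an element of $\operatorname{im}(d^k_{X\ten Y})$ for a single $k$, which requires both tensor factors to route through that same $d^k$; for $d^p$ inserted into the first factor this fails.) The issue is not cosmetic. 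Take $X=Y$ to be the cosimplicial $\K$-module $\K[\bullet]$ with $x,y$ the generators of $X^0,Y^0$; with $p=q=0$ you need $d^1x\ten d^0y$ to vanish in $C(X\ten Y)^1 = (X\ten Y)^1/\operatorname{im}(d^1\ten d^1)$, and it does not, since $d^0 y$ and $d^1 y$ are linearly independent in $Y^1$. The cancellation that makes $AW$ a cochain map happens across the two summands of $d_h(x\ten y)$ at once, and rests on choosing canonical representatives in $\bigcap_j\ker s^j$, on which the conormalization differential is $\sum_j d^j$ rather than $d^0$ alone --- so the term-by-term ``push the outer $d^0$ through'' bookkeeping does not set up the way you describe. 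That combinatorial content is precisely what the paper avoids by invoking the known cosimplicial-modules statement.
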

\begin{proof} Regarding $X$ and $Y$ simply as cosimplicial modules, the usual statement is that these are both maps in $\ch^{\geq 0} (\K\operatorname{Mod})$. Since the cosimplicial operators commute with the internal differential $X^p_i \to X^p_{i-1}$, these are actually both maps in $\ch^{\geq 0}(\ch)$. Finally, then tensor product on $\ch^{\geq 0}(\ch)$ is given by \[ (A\ten B)^p_q = \oplus_{i+j=p} A^i \ten B^j = \oplus_{i+j=p,m+n=q} A^i_m \ten B^j_n\] which is compatible with the isomorphism between the categories of bicomplexes and cochain complexes in complexes. \end{proof}

Notice that  $C(X)\ten C(Y)$ is a retraction of $C(X\ten Y)$:
\[ \nabla \circ AW = \id_{C(X)\ten C(Y)}. \]
Furthermore, if $X$ and $Y$ are cosimplicial abelian groups, the dual Eilenberg-Zilber theorem (see, for example, the appendix in \cite{bauesmuro}) tells us that $\nabla$ and $AW$ are inverse chain homotopy equivalences. In the case when $X$ and $Y$ are cosimplicial chain complexes, we can extend this to show that these maps give isomorphisms on $E^2$ (Proposition~\ref{P:easye2}).

We begin with a small result on the spectral sequence associated to the tensor product of bicomplexes. 
\begin{lem}\label{L:tensbicplx}
Let $B$ and $B'$ be bicomplexes (over $\K$) and consider the spectral sequences obtained by filtering by columns. Then the map
\[ E^r(B) \ten E^r(B') \to E^r(B\ten B') \]
is an isomorphism.
\end{lem}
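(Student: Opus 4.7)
The plan is to proceed by induction on $r$, with the Künneth formula over the field $\K$ handling each inductive step cleanly. The only nontrivial ingredient, apart from Künneth, is a direct identification of the differential on $E^{r-1}(B \ten B')$ as a Leibniz-type differential; once that is in hand, the induction practically writes itself.

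For the base case $r = 0$, there is nothing to verify beyond unpacking definitions: $E^0(B) = B$ and $E^0(B') = B'$ as bigraded modules with $d^0$ equal to the vertical differential $d_v$, and under the standard bigraded identification the vertical differential on the tensor bicomplex $B \ten B'$ is precisely $d_v \ten 1 + 1 \ten d_v$. So $E^0(B) \ten E^0(B') \cong E^0(B \ten B')$ as $d^0$-complexes.

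For the inductive step, I would assume at level $r-1$ an isomorphism of complexes
\[ (E^{r-1}(B) \ten E^{r-1}(B'),\; d^{r-1} \ten 1 + 1 \ten d^{r-1}) \cong E^{r-1}(B \ten B'), \]
and then apply Künneth over the field $\K$:
\[ E^r(B) \ten E^r(B') = H(E^{r-1} B) \ten H(E^{r-1} B') \cong H(E^{r-1} B \ten E^{r-1} B') \cong E^r(B \ten B'). \]
This closes the induction.

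The hard part, and the main obstacle, is identifying the $d^{r-1}$ differential on $E^{r-1}(B \ten B')$ with $d^{r-1} \ten 1 + 1 \ten d^{r-1}$ at each stage. My plan here is to work directly from Definition~\ref{NOTATION:SS}: a class in $E^{r-1}_{-p,q}(B)$ is represented by some $x \in F^{-p}$ with $\partial x \in F^{-p-r+1}$, and $d^{r-1}[x] = [\partial x]$. Given such representatives $x$ and $y$, the element $x \ten y$ lies in $F^{-(p+p')}(B \ten B')$ (since the column filtration on a tensor bicomplex is the tensor of column filtrations) and satisfies
\[ \partial(x \ten y) = \partial x \ten y + x \ten \partial y \in F^{-(p+p')-r+1}(B \ten B'), \]
so $x \ten y$ is an $(r{-}1)$-cycle whose $d^{r-1}$-image reads off as $[d^{r-1} x \ten y + x \ten d^{r-1} y]$, matching the Leibniz expression. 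No signs need to be tracked because we work in characteristic two, which is also what makes the invocation of Künneth unambiguous.
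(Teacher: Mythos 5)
Your proof is correct and takes essentially the same route as the paper's: induction on $r$, with the K\"unneth theorem over the field $\K$ supplying each inductive step. The paper's proof is terser — it records the base case $E^0(B)\ten E^0(B')\cong E^0(B\ten B')$ and then chains the K\"unneth isomorphism with the inductive hypothesis — while you spend more words verifying that the differential on $E^{r-1}(B\ten B')$ matches the Leibniz differential, a point the paper leaves implicit (it follows because the map $E^{\bullet}(B)\ten E^{\bullet}(B')\to E^{\bullet}(B\ten B')$ is already a morphism of spectral sequences, hence commutes with $\ssd^{r-1}$). So your extra step is a welcome explication rather than a genuinely different argument.
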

We will generally identify these two bigraded modules.
\begin{proof}[Proof of Lemma~\ref{L:tensbicplx}] The tensor product 
\[ (B\ten B')_{p,q} = \bigoplus_{i,j} B_{i,j}\ten B'_{p-i,q-j} \] is again a bicomplex. The result follows by iterated application of the K\"unneth isomorphism. Specifically, we use the induction and have the isomorphism
\begin{align*}
(E^rB\ten E^rB')_{p,q} &= \bigoplus_{i,j} E^r_{i,j}(B) \ten E^r_{p-i,q-j}(B') \\
&= \bigoplus_{i,j} H(E^{r-1}(B), \ssd^{r-1})_{i,j} \ten H(E^{r-1}(B'), \ssd^{r-1})_{p-i,q-j} \\
&\cong H\left( (E^{r-1}(B), \ssd^{r-1}) \ten (E^{r-1}(B'), \ssd^{r-1})\right)_{p,q} \\
&\overset\bigstar\cong H(E^{r-1}(B\ten B'), \ssd^{r-1})_{p,q} \\
&=E^r_{p,q}(B\ten B')
\end{align*}
where $\bigstar$ is the statement for $r-1$ and the base case is 
\begin{multline*} (E^0 B \ten E^0 B')_{p,q} = \bigoplus_{i,j} E^0_{i,j} (B) \otimes E^0_{p-i,q-j}(B')  \\ \cong \bigoplus_{i,j} B_{i,j} \otimes B'_{p-i,q-j} = (B\ten B')_{p,q} \cong E^0_{p,q} (B\ten B').\end{multline*}
\end{proof}



\begin{prop}\label{P:easye2}
Let $X$ and $Y$ be cosimplicial chain complexes. The Alexander-Whitney map
\[ C(X)\ten C(Y) \to C(X\ten Y) \] induces an isomorphism 
\[ E^r(X) \ten E^r(Y) \overset{\cong}{\to} E^r(X\ten Y) \] for all $r\geq 2$. The inverse is induced from the shuffle map $\nabla$.
\end{prop}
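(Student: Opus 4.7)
The plan is to establish the isomorphism at the $E^2$ level by reducing to the classical dual Eilenberg-Zilber theorem, and then propagate upward via a standard spectral sequence comparison. Since the identity $\nabla \circ AW = \id_{C(X) \ten C(Y)}$ already holds on the nose, it suffices to prove that $AW$ induces an isomorphism on $E^2$; then $\nabla$ is automatically its two-sided inverse on $E^r$ for all $r \geq 2$. Both $AW$ and $\nabla$ respect the column filtration (being maps of bicomplexes, by the preceding lemma), so they induce maps of spectral sequences, and via Lemma~\ref{L:tensbicplx} we identify $E^r(C(X) \ten C(Y))$ with $E^r(X) \ten E^r(Y)$.

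The heart of the argument is the computation on $E^1$. The formulas~\eqref{E:AW} for $AW$ and the shuffle formula for $\nabla$ involve only cosimplicial coface and codegeneracy operators, which commute with the internal differential $\hd$; hence both maps descend to vertical homology. Using the K\"unneth isomorphism (valid because $\K$ is a field) together with the identification $H_*(CZ^p) \cong C(H_*Z)^p$ from \cite[3.1]{bk}, I would identify both $E^1$ pages with conormalization complexes built from the cosimplicial graded $\K$-modules $H_*X$ and $H_*Y$, under which $E^1(AW)$ and $E^1(\nabla)$ become the classical Alexander-Whitney and shuffle maps for these cosimplicial graded modules. The dual Eilenberg-Zilber theorem as recorded in the appendix of \cite{bauesmuro} furnishes a chain homotopy $AW \circ \nabla \simeq \id$ expressed entirely in terms of cosimplicial operators; passing to cohomology with respect to the horizontal differential shows that $AW$ and $\nabla$ induce inverse isomorphisms on $E^2$.

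For $r > 2$, one invokes the standard induction: a map of spectral sequences which is an isomorphism on $E^r$ is automatically an isomorphism on $E^{r+1}$, since the latter is computed as homology of the former with respect to the induced differential. The main subtlety, and the point I would dwell on in the proof, is verifying that the classical dual Eilenberg-Zilber homotopy — originally defined as a map between cochain complexes of $\K$-modules — remains a homotopy in our enriched setting of cochain complexes of chain complexes. Because the homotopy is built out of cosimplicial operators alone, its commutation with $\hd$ is automatic, but this compatibility (and the corresponding compatibility for $AW$ and $\nabla$ themselves) is what licenses the descent to vertical homology and should be stated explicitly.
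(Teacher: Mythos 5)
Your proposal is correct and follows essentially the same route as the paper: reduce via Lemma~\ref{L:tensbicplx} to showing that $AW$ is an $E^2$-isomorphism of bicomplexes, identify $E^1(AW)$ with the Alexander--Whitney map for the cosimplicial graded $\K$-modules $H_*X$, $H_*Y$ (via K\"unneth and $H_*C \cong CH_*$), apply the dual Eilenberg--Zilber theorem there, and then use $\nabla\circ AW = \id$ to identify the inverse. The paper packages the $E^1$-identification in an explicit commutative square rather than invoking the compatibility of the EZ homotopy with $\hd$ directly, but that is a presentational difference, not a change of argument.
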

\begin{proof} By Lemma~\ref{L:tensbicplx} it is enough to show that the map of bicomplexes $C(X) \ten C(Y) \to C(X\ten Y)$ induces a an isomorphism on page 2 of the associated spectral sequence:
\[ E^2 (C(X) \ten C(Y)) \overset{\cong}{\to} E^2(C(X\ten Y)). \]
Consider the diagram
\[ \xymatrix@R=.8pc{
CH_*(X) \ten CH_*(Y) \ar@{->}[r]^{AW} \ar@{=}[d] & C[H_*(X) \ten H_*(Y) ] \ar@{->}[r]^-\cong & CH_*(X\ten Y) \ar@{=}[d] \\
H_*C(X) \ten H_*C(Y) \ar@{->}[r]^-\cong & H_*[C(X) \ten C(Y) ] \ar@{->}[r]^-{H_*AW} & H_*C(X\ten Y)
}\]
where the isomorphisms come from the K\"unneth theorem. It is easy to see that this commutes when we consider $CH_*(X) = H_*C(X)$ as a subobject of $H_*(X)$. 
The dual Eilenberg-Zilber theorem implies that the top left map $AW$ becomes an isomorphism when we take homology in the horizontal direction. The same thus applies to  $H_*AW$, implying that the composite
\[ E^2(X) \ten E^2(Y) \overset{\cong}{\to} E^2(C(X) \ten C(Y)) \overset{E^2(AW)}{\longrightarrow} E^2(X\ten Y) \] is an isomorphism. 

Since $\nabla AW = \id$, the inverse map must be the one induced from $\nabla$.
\end{proof}

\begin{rem}\label{R:commutative}
One consequence of this proposition is that although \[ AW: C(Y)\ten C(Y) \to C(Y\ten Y)\] is not $\pi$-equivariant, it becomes so on $E^2$ (see \cite[Theorem 9.3(vii)]{bk}). This is because at the level of bicomplexes $\nabla$ is $\pi$-equivariant:
\[ AW \sigma = AW \sigma \id = AW \sigma (\nabla AW )  = AW (\nabla \sigma) AW \sim_{2} \id \sigma AW = \sigma AW, \]
where $\sim_2$ indicates that  these induce the same map on $E^2$.\end{rem}

\section{Isolation of the Rows}\label{S:isolationofrows}

Recall from Figure~\ref{F:e1orig} that the $E^1$ page of the spectral sequence for $\e(D_{rst})$ contains three `horizontal strips' $[-2s,-s] \times \set{2t}$, $[-2s-r,-s-r]\times \set{2t+r-1}$, and $[-2s-2r,-s-r] \times \set{2t+2r-2}$ (if $r=\infty$ we only have the first of these) which are the only places where $\ssd^1$ may be nonzero. 
We introduce a slightly more general class of complexes in this section (basically including the $r=0$ and $r=1$ cases of the middle horizontal strip) to facilitate this computation, and quickly compute the cohomology of the middle horizontal strip. In section~\ref{S:cohomvom} we will compute the cohomology of the top and bottom strips.

Fix $s$ and $s'$ nonnegative integers and let $\voc_{s,s'}$ be the cochain complex
\begin{equation} \voc_{s,s'} = C(H_s(D_{\infty ss}) \ten H_{s'}(D_{\infty s's'})). \label{E:voc} \end{equation} When $s=s'$, $\voc_{s,s}$ has an obvious $\pi$-action and we define 
\begin{equation} \vom_s = \voc_{s,s} / \pi. \label{E:vom} \end{equation} We know from Proposition~\ref{P:homologyofD} that a basis for $H_s(D_{\infty ss})$ is given by \[ \h_s^p = \setm{\zeta}{\zeta:[s]\hookrightarrow [p], \zeta(0)=0}, \] and we take $\h_s^p \times \h_{s'}^p$ as the preferred basis for $H_s(D_{\infty ss}^p) \ten H_{s'}(D_{\infty s's'}^p)$.
Let \[ \vo^p_{s,s'} \ci \h_s^p \times \h_{s'}^p \label{SYM:vo}\]  be the set of pairs $(\zeta, \zeta')$ with $[p] = \im \zeta \cup \im \zeta'$.

\begin{lem}
The set $\vo^p_{s,s'}$ is a basis for $\voc_{s,s'}^p$. 
\end{lem}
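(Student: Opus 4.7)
The plan is to unpack the conormalization formula~\eqref{E:conorm} and observe that, in this special case, the subspace being quotiented out is itself spanned by a subset of the natural basis, so the complementary subset descends to a basis of the quotient. By Proposition~\ref{P:homologyofD} the piece $H_s(D_{\infty ss}^p) \ten H_{s'}(D_{\infty s's'}^p)$ is free on $\h_s^p \times \h_{s'}^p$, and for $k \geq 1$ the coface map on the tensor product acts diagonally as $d^k \ten d^k$.

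The key step will be to pin down, for each $k \in [1,p]$, the image of the diagonal coface map
\[ d^k : \h_s^{p-1} \times \h_{s'}^{p-1} \longrightarrow \h_s^p \times \h_{s'}^p. \]
I would show this is an injection onto the set of pairs $(\eta,\eta')$ with $k \notin \im \eta \cup \im \eta'$. The fact that $d^k$ lands in $\h_s^p \times \h_{s'}^p$ at all uses $k \geq 1$, which keeps $\eta(0) = 0$; the lifting of a pair missing $k$ goes through the order-preserving bijection $[p] \setminus \{k\} \cong [p-1]$.

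Granted that, summing over $k = 1, \dots, p$ gives that $\sum_{k=1}^p \im(d^k)$ is spanned by exactly those basis pairs $(\eta,\eta')$ with $[1,p] \not\subseteq \im \eta \cup \im \eta'$. Since $0 \in \im \eta$ always, this is the complement of $\vo^p_{s,s'}$ inside the basis $\h_s^p \times \h_{s'}^p$. Quotienting a vector space by the span of a disjoint subset of a chosen basis leaves the complementary subset as a basis for the quotient, which yields the claim.

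The only step with any substance is the identification of $\im(d^k)$ as a subset of the basis; everything else is bookkeeping, so I do not anticipate a significant obstacle.
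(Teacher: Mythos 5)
Your proposal is correct and follows essentially the same route as the paper's proof: both observe that each $d^k$ ($k\geq 1$) carries the preferred basis to itself, identify $\sum_{k=1}^p \im d^k$ as the span of exactly those basis pairs $(\zeta,\zeta')$ with $[1,p]\not\subseteq \im\zeta\cup\im\zeta'$, and conclude that the complementary set $\vo^p_{s,s'}$ descends to a basis of the cokernel. Your version spells out the injectivity of $d^k$ onto pairs missing $k$ a bit more explicitly, but the substance is identical.
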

\begin{proof} 
Recall that 
if $\zeta \in \h_s^p$, $\zeta' \in  \h_{s'}^p$, and $k>0$, then  $d^k \zeta \in \h_s^{p+1}$ and $d^k \zeta' \in \h_{s'}^{p+1}$.  Thus $d^k$ takes basis elements to basis elements for $k > 0$.  
A basis element $(\zeta, \zeta') \in \h_s^p \times \h_{s'}^p$ with $[1,p] \ci \im \zeta \cup \im \zeta'$ is not in $\im d^k$ for any $k>0$, hence is not in $\im d^1 + \dots + \im d^p$. All other basis elements are in $\im d^k$ for some $k>0$.  Thus the set of elements $(\zeta, \zeta')$ with $[1,p] \in \im \zeta \cup \im \zeta'$ constitutes a basis for a complement of $\im d^1 + \cdots + \im d^p$. The result follows from the definition of conormalization.
\end{proof}

\begin{rem} Observe that $\vo^p_{s,s'}$ is nonempty exactly when $p\in [ \max(s,s'), s+s']$.
\end{rem}

\begin{prop}\label{P:comparison}
Fix $r,s,t$, and consider the spectral sequence for $\e( D_{rst})$. There are maps
\begin{align*}
\psi_{bot}:& \vom_{s}^p  \to E^1_{-p,2t} \\
\psi_{mid}:& \voc_{s,s+r}^p  \to E^1_{-p,2t+r-1} \\
\psi_{top}:& \vom_{s+r}^p  \to E^1_{-p,2t+2r-2}
\end{align*}
for $p\geq 0$ which are isomorphisms of complexes.
\end{prop}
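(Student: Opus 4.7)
The three maps $\psi_\bullet$ should be basis bijections, so the plan is first to write them down on basis elements via Theorem~\ref{T:e1basis}, then to recognize both sides as conormalizations of cosimplicial $\K$-modules whose differentials are induced from $d^0$, and finally to verify that the underlying assignment is a morphism of cosimplicial modules.

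For $\psi_{mid}$, send $(\zeta,\gamma)\in\vo_{s,s+r}^p$ to $e_0 \ten [\zeta] \ten [\gamma]$, where $[\zeta]\in H_s(D_{rss}^p)$ and $[\gamma]\in H_{s+r-1}(D_{rss}^p)$ are the classes corresponding to $\zeta\in\h_s^p$ and $\gamma\in\h_{s+r}^p$ under Proposition~\ref{P:homologyofD}. For $\psi_{bot}$, a basis of $\vom_s^p = \voc_{s,s}^p/\pi$ is given by the $\pi$-orbits in $\vo_{s,s}^p$; since the only fixed orbit is $(\id_{[s]},\id_{[s]})$ (occurring only at $p=s$), I pick the representative $\zeta<\zeta'$ of each free orbit and set $\set{\zeta,\zeta'}\mapsto e_0\ten[\zeta]\ten[\zeta']$ while $(\id_{[s]},\id_{[s]})\mapsto e_0\ten\id_{[s]}\ten\id_{[s]}$. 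The definition of $\psi_{top}$ is identical with $\h_{s+r}^p$ in place of $\h_s^p$. Comparing side-by-side with the three basis lists in Theorem~\ref{T:e1basis} shows that each $\psi_\bullet$ is a bijection of bases, hence a graded module isomorphism.

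It remains to show compatibility with the differentials. Under the identification $E^1_{-p,q}(\e(D_{rss})) \cong CH_q(\e(D_{rss}))^p$ recalled at the start of Section~\ref{S:e1}, $\ssd^1$ becomes the conormalization differential $\cd$, which is induced from the cosimplicial coface $d^0$. The differentials on $\voc_{s,s'}$ and $\vom_s$ are likewise induced from $d^0$ by construction. Thus it suffices to observe that the assignment $\zeta\ten\zeta'\mapsto e_0\ten[\zeta]\ten[\zeta']$ defines a morphism of cosimplicial $\K$-modules
\[ H_s(D_{\infty ss}) \ten H_{s'}(D_{\infty s's'}) \to H_*(\e(D_{rss})), \]
after which applying $C$ (and taking $\pi$-coinvariants for $\psi_{bot}$ and $\psi_{top}$) produces the desired chain map. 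The cosimplicial naturality of this assignment is clear: $e_0\in W_0$ is cosimplicially constant, the isomorphism of Lemma~\ref{L:mayonethree} is natural in the ordered graded input, and the identifications from Proposition~\ref{P:homologyofD} between $H_{s+r-1}(D_{rss})$ and the cosimplicial module $H_{s+r}(D_{\infty (s+r)(s+r)})$ (both with levels $\K\h_{s+r}^p$) respect the coface maps.

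The main obstacle I anticipate is bookkeeping: tracing an element through the several identifications in play (the Bousfield--Kan $E^1$-iso $H_*C\cong CH_*$, the quasi-iso $\e(H_*(-))\to\e(-)$ used to compute $H_*\e(D_{rss}^p)$, Lemma~\ref{L:mayonethree}, and the connecting homomorphism used in the proof of Proposition~\ref{P:homologyofD}) and confirming that each is cosimplicially natural. Once naturality is established, the proposition is essentially a tautology: the bases on both sides have been chosen precisely so that $\psi_\bullet$ reads as $\zeta\ten\zeta'\mapsto e_0\ten[\zeta]\ten[\zeta']$, and this plainly commutes with the levelwise coface action on each tensor factor.
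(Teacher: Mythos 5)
Your proposal is correct and takes essentially the same approach as the paper: both define the maps by $\zeta\ten\zeta'\mapsto e_0\ten\zeta\ten\zeta'$, identify each side of the arrow with the conormalization of a cosimplicial $\K$-module (so the $\ssd^1$ differential and the $d^0$-induced differential on $\voc/\vom$ coincide), and reduce the whole thing to checking cosimplicial naturality of the identifications coming from Proposition~\ref{P:homologyofD} and Lemma~\ref{L:mayonethree}. The paper packages this slightly differently — it constructs the map at the level of cosimplicial modules first (using the long exact sequences to identify $H_{s+r-1}(D_{rss})$ with $H_{s+r}(D_{\infty,s+r,s+r})$) and then applies $C$, whereas you start from the basis bijection of Theorem~\ref{T:e1basis} and verify the chain-map condition afterward — but the substance and the identifications invoked are the same.
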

\begin{proof}
Assume $t=s$. We have isomorphisms of cosimplicial modules
\begin{align*}
(H_s(D_{rss}) \ten H_s(D_{rss}) ) / \pi & \to H_{2s} (\e(H_*(D_{rss}))) \\
H_s(D_{rss}) \ten H_{s+r-1} (D_{rss}) &\to H_{2s+r-1} (\e(H_*(D_{rss}))) \\
(H_{s+r-1}(D_{rss}) \ten H_{s+r-1}(D_{rss}) ) / \pi & \to H_{2s+2r-2} (\e(H_*(D_{rss})))
\end{align*}
each given by $\zeta \ten \zeta' \mapsto e_0 \ten \zeta \ten \zeta'$. Applying $C$ to the modules on the right gives the nontrivial rows of $E^1$.

We now identify the left hand side in the above isomorphisms. The inclusion $D_{rss} \to D_{\infty ss}$ induces an isomorphism $H_s(D_{rss}) \overset{\cong}{\to} H_s(D_{\infty ss})$. 
Since $H_{s+r} \Delta^p = 0 = H_{s+r-1} \Delta^p$ and $\sk_{s-1} \Delta^p$ is zero in degree $s+r-1$ and degree $s+r-2$, the long exact sequences associated to the short exact sequences
\begin{gather*} 0 \to \sk_{s+r-1} \Delta^p \to \Delta^p \to D_{\infty,s+r,s+r}^p \to 0 \\
0 \to \sk_{s-1} \Delta^p \to \sk_{s+r-1} \Delta^p \to D_{rss}^p \to 0 \end{gather*}
give isomorphisms
\[ \xymatrix@R=.8pc{
H_{s+r} (D_{\infty,s+r,s+r}) \ar@{->}[r]_-\partial^-\cong &  H_{s+r-1} (\sk_{s+r-1} \Delta) \ar@{->}[r]^-\cong & H_{s+r-1} (D_{rss}).
}\]
Combined, these give isomorphisms
\begin{align*}
\vom_s &\to C( (H_s(D_{rss}) \ten H_s(D_{rss}) ) / \pi ) \\
\voc_{s,s+r} & \to C(H_s(D_{rss}) \ten H_{s+r-1} (D_{rss})) \\
\vom_{s+r,s+r} & \to C((H_{s+r-1}(D_{rss}) \ten H_{s+r-1}(D_{rss}) ) / \pi).
\end{align*}
\end{proof}


\begin{thm}\label{T:asymmetric} The cohomology of $\voc_{s,s'}$ is
\[ H^n \voc_{s,s'} = \begin{cases} \K & \text{if } n = s+s', \\ 0 & \text{otherwise.}\end{cases} \]
\end{thm}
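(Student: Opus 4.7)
The plan is to invoke the Künneth–style isomorphism of Proposition~\ref{P:easye2} to factor $\voc_{s,s'}$ and reduce the problem to computing $H^*C(H_s(D_{\infty ss}))$ individually, which turns out to be concentrated in a single degree.

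First, I would view the cosimplicial graded module $H_s(D_{\infty ss})$ as a cosimplicial chain complex concentrated in internal degree $s$ (with zero internal differential), and similarly for $H_{s'}(D_{\infty s's'})$. Because the internal differentials vanish, the spectral sequence associated to either of these cosimplicial chain complexes collapses so that $E^2 = E^\infty$ is simply the cohomology of the conormalized cochain complex. Proposition~\ref{P:easye2} then yields
\[ H^*C\bigl(H_s(D_{\infty ss})\bigr) \otimes H^*C\bigl(H_{s'}(D_{\infty s's'})\bigr) \xrightarrow{\cong} H^*\voc_{s,s'}. \]

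Second, I would compute each factor directly. The cosimplicial module $H_s(D_{\infty ss})$ has basis $\h_s^p$ in degree $p$, and for $k>0$ the coface map $d^k$ sends $\h_s^p$ into $\h_s^{p+1}$ (this was already observed in the proof of the Lemma bounding $\vo^p_{s,s'}$). The same argument used there shows that a basis element $\zeta$ is not in $\operatorname{im} d^1 + \cdots + \operatorname{im} d^p$ precisely when $[1,p] \subseteq \operatorname{im}\zeta$. Combined with $\zeta(0)=0$ and injectivity of $\zeta : [s] \hookrightarrow [p]$, this forces $p=s$ and $\zeta = \id_{[s]}$. Hence $C\bigl(H_s(D_{\infty ss})\bigr)^p$ is $\K$ if $p=s$ and $0$ otherwise, so (there being no differential to consider) its cohomology is $\K$ concentrated in degree $s$. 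Likewise for $H_{s'}(D_{\infty s's'})$.

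Combining the two steps gives $H^n\voc_{s,s'} = \K$ when $n = s+s'$ and $0$ otherwise. The only subtlety I anticipate is the bookkeeping around Proposition~\ref{P:easye2}: that proposition is stated for cosimplicial chain complexes, while $H_s(D_{\infty ss})$ is a cosimplicial module, so I need to observe that placing it in a single internal degree and using Proposition~\ref{P:easye2} does give the Künneth formula for the cohomology of $\voc_{s,s'}$ as a cochain complex. This is essentially formal, so the proof should be very short.
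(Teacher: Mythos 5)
Your proposal is correct and takes essentially the same approach as the paper's proof: both reduce $H^*\voc_{s,s'}$ to $E^2\bigl(H_* D_{\infty ss}\bigr)\otimes E^2\bigl(H_* D_{\infty s's'}\bigr)$ via the K\"unneth isomorphism of Proposition~\ref{P:easye2}, and then identify each factor as $\K$ concentrated in a single degree. The only cosmetic difference is that the paper cites Proposition~\ref{P:univexamplecalc} for the individual factors while you rederive the concentration of $C(H_s(D_{\infty ss}))$ in cosimplicial degree $s$ directly from the surjectivity forced by $[1,p]\subseteq\im\zeta$ together with $\zeta(0)=0$ and injectivity.
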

\begin{proof}
Notice that $H_* D_{\infty s s}$ is concentrated in degree $s$ by Proposition~\ref{P:homologyofD}. So 
\begin{align*}
H^* \voc_{s,s'} &= H^* C(H_* D_{\infty ss} \ten H_* D_{\infty s's'}) \\ &= E^2(H_* D_{\infty ss} \ten H_* D_{\infty s's'} ) \\ &\cong E^2 (H_* D_{\infty ss}) \ten E^2( H_* D_{\infty s's'})\end{align*} where the last isomorphism is by Proposition~\ref{P:easye2}. The result follows from the computation of $E^1(D_{\infty ss})$ in Proposition~\ref{P:univexamplecalc}.
\end{proof}

\section{Cohomology of \texorpdfstring{$\vom$}{\unichar{937}-bar}}\label{S:cohomvom}
Fix $s\geq 0$. In this section we employ a short exact sequence in order to compute the cohomology of $\vom_s$. Let $A=\ker ( \voc_{s,s} \to \vom_s)$ and consider the exact sequence 
\[ 0 \to A \to \voc_{s,s} \to \vom_s \to 0. \]  We begin by identifying the complex $A$ and studying its cohomology.

Observe that for $p>s$, if $(\zeta, \zeta')\in \vo_{s,s}^p$ then $\sigma(\zeta,\zeta') \neq (\zeta, \zeta')$. Thus $\voc_{s,s}^p$ is a free $\K\pi$-module for $p>s$, so \[ \ker (\voc_{s,s}^p \to \vom_s^p) = A^p = (1+\sigma) \voc_{s,s}^p. \] Furthermore, $\vo_{s,s}^s = \set{(\id_{[s]}, \id_{[s]}) }$, so $A^s=0$.

We have now identified $A$ as the image of the map
\[ \voc_{s,s} \overset{1+\sigma}{\longrightarrow} \voc_{s,s}. \] 
The kernel of this map,
\[ \tvom =  \ker(1+\sigma: \voc_{s,s} \to A) \]
will be of independent interest 
(see \cite[Proposition 2.2]{me2b}). For now, notice that $\tvom^p = (1+\sigma)\voc^p_{s,s} = A^p$ for $p>s$ since $\voc^p_{s,s}$ is $\K\pi$-free. This implies that
\[ H^p \tvom = H^p A \qquad p \geq s+2. \]

Using this, we have the following lemma.

\begin{lem}\label{L:upsilonA}
Consider the inclusion $\tvom \to \voc_{s,s}$. If $s\geq 2$, then the induced map
\[ H^{2s} (\tvom) \to H^{2s} (\voc_{s,s}) \]
is zero. As a consequence, the inclusion $A \to \voc_{s,s}$ induces the zero map $H^{2s} (A) \to H^{2s} (\voc_{s,s})$.
\end{lem}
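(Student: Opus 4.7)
The plan is to exploit the shuffle map to manufacture a $\pi$-invariant linear functional on $\voc_{s,s}^{2s}$ that simultaneously detects the top cohomology and annihilates $(1+\sigma)\voc_{s,s}^{2s}$.

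First I would observe that $C := C(H_s D_{\infty ss})$ is tiny: an element $\zeta \in \h_s^p$ survives the conormalization quotient only if $[1,p] \subseteq \im \zeta$, which combined with $\zeta(0) = 0$ and $|\im \zeta| = s+1$ forces $p = s$ and $\zeta = \id_{[s]}$. So $C$ is one-dimensional in cosimplicial degree $s$, and $C \otimes C$ is one-dimensional in cosimplicial degree $2s$ with trivial $\pi$-action and trivial differential.

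Then I would invoke the shuffle map $\nabla : \voc_{s,s} \to C \otimes C$, which is $\pi$-equivariant at the bicomplex level by Remark~\ref{R:commutative} and induces an $E^2$-isomorphism by Proposition~\ref{P:easye2}; since both bicomplexes are concentrated in a single chain degree, this $E^2$-iso is itself a cohomological isomorphism. Writing $\epsilon := \nabla^{2s} : \voc_{s,s}^{2s} \to \K$, the trivial $\pi$-action on the target gives $\epsilon \circ \sigma = \epsilon$, so $\epsilon$ kills $(1+\sigma)\voc_{s,s}^{2s}$; at the same time $\epsilon$ represents the isomorphism $H^{2s}(\voc_{s,s}) \xrightarrow{\cong} \K$, so any cocycle $z$ with $\epsilon(z) = 0$ satisfies $[z] = 0$.

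To finish I would note that for $p > s$ no pair $(\zeta, \zeta') \in \vo^p_{s,s}$ is diagonal, since otherwise $\im \zeta \cup \im \zeta'$ would have only $s + 1 < p + 1$ elements; thus $\voc_{s,s}^p$ is $\K\pi$-free and $\tvom^p = (1+\sigma)\voc_{s,s}^p = A^p$. Every cocycle $z \in \tvom^{2s} = A^{2s}$ is then of the form $(1+\sigma) y$, so $\epsilon(z) = 0$ and $[z] = 0$ in $H^{2s}(\voc_{s,s})$, establishing that the inclusion-induced map is zero. The consequence for $A$ is immediate from $A \subseteq \tvom$, which holds since $(1+\sigma)^2 = 0$ in characteristic two. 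The main step requiring care is confirming that the $E^2$-isomorphism of Proposition~\ref{P:easye2} genuinely upgrades to a cohomological isomorphism in this degenerate single-row setting --- routine, since both spectral sequences collapse immediately --- and that the cochain-level $\pi$-equivariance of $\nabla$ is what we need (rather than mere equivariance up to homotopy).
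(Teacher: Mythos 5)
Your proof is correct, and it takes a genuinely different route from the paper's. The paper's argument is indirect: it applies the long exact cohomology sequence of $0 \to \tvom \to \voc_{s,s} \to A \to 0$, uses $\tvom^{2s+1}=0$ and $H^{2s}\voc_{s,s}=\K$ (Theorem~\ref{T:asymmetric}) to trim the sequence to four terms, and then derives a contradiction from the identification $H^{2s}\tvom = H^{2s}A$ (valid precisely when $s\geq 2$, which is why the paper carries that hypothesis). Your argument is direct: the computation that $C(H_s D_{\infty ss})$ is one-dimensional and concentrated in cosimplicial degree $s$ is sound (a based injection $[s]\hookrightarrow[p]$ with $[1,p]\subseteq\im$ forces $p=s$), the shuffle map $\nabla$ is indeed a $\pi$-equivariant cochain quasi-isomorphism onto $C\otimes C$ (by the dual Eilenberg--Zilber theorem and Remark~\ref{R:commutative}), and since both sides are top-nonzero in degree $2s$, evaluating $\nabla^{2s}$ on cocycles literally computes the induced isomorphism on $H^{2s}$. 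Your identification $\tvom^{2s}=A^{2s}=(1+\sigma)\voc_{s,s}^{2s}$ and the $\pi$-invariance of $\epsilon=\nabla^{2s}$ then kill the class. Note that your argument only needs $2s>s$, i.e.\ $s\geq 1$, which is slightly stronger than the stated lemma (for $s=1$ the conclusion is anyway vacuous since $H^2\tvom=0$, per Proposition~\ref{P:cohomatvom}); the hypothesis $s\geq 2$ in the lemma reflects the needs of the paper's proof, not yours. The trade-off: the paper's argument is shorter given the surrounding bookkeeping, while yours is more conceptual, exhibiting an explicit $\pi$-invariant functional detecting top cohomology and making transparent why elements of $\im(1+\sigma)$ must die.
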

\begin{proof}
By assumption, $s\geq 2$, so  $H^{2s} \tvom = H^{2s} A$. 
The short exact sequence
\[ 0 \to \tvom \to \voc_{s,s} \to A \to 0 \]
induces the exact sequence
\[ \xymatrix@R=.8pc{
0 \ar@{->}[r] & H^{2s-1} A  \ar@{->}[r] & H^{2s} \tvom  \ar@{->}[r] & H^{2s} \voc_{s,s} \ar@{->}[r] \ar@{=}[d] & H^{2s} A \ar@{->}[r] & 0\\
&&& \K
}\] 
since $H^{2s} \voc_{s,s} = \K$ by Theorem~\ref{T:asymmetric} and $\tvom^{2s+1}=0$.
Seeking a contradiction, suppose that $H^{2s} \tvom \to H^{2s} \voc_{s,s}$ is nonzero. Then $0= H^{2s} A$, but since $s\geq 2$ we have $H^{2s} A = H^{2s} \tvom$. This contradicts our assumption.
\end{proof}

\begin{prop}\label{P:cohomatvom}
Fix $s\geq 0$. We have
\begin{align*}
  H^p(A) &= \begin{cases} \K & \text{for } p\in [s+1,2s] ,  s>0, \\ 0 & \text{else} \end{cases} &  H^p (\tvom) &= \begin{cases} \K & \text{for } p\in [s+2,2s] , s > 1, \\ \K & \text{for } p=0=s, \\ 0 & \text{else.}\end{cases}    
\end{align*}
\end{prop}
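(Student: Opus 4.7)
The plan is to derive $H^*(A)$ and $H^*(\tvom)$ simultaneously by playing two short exact sequences against one another. First, observe that in characteristic $2$ the identity $(1+\sigma)^2 = 0$ gives $A \subseteq \tvom$ as subcomplexes of $\voc_{s,s}$. Moreover, for $p > s$ the module $\voc_{s,s}^p$ is $\K\pi$-free (as noted just before Lemma~\ref{L:upsilonA}), so $A^p = \tvom^p$ there, while $A^s = 0$ and $\tvom^s = \K\{(\id_{[s]},\id_{[s]})\}$. This yields a short exact sequence
\[ 0 \to A \to \tvom \to \K[s] \to 0, \]
where $\K[s]$ denotes the complex that is $\K$ in cochain degree $s$ and zero elsewhere.

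Assuming $s \geq 2$, the main calculation will come from feeding the defining sequence $0 \to \tvom \to \voc_{s,s} \to A \to 0$ through its long exact sequence. By Theorem~\ref{T:asymmetric}, $H^p(\voc_{s,s})$ is $\K$ for $p=2s$ and vanishes otherwise, and by Lemma~\ref{L:upsilonA} the map $H^{2s}(\tvom) \to H^{2s}(\voc_{s,s})$ is zero. The long exact sequence then collapses to give $H^{2s}(A) \cong \K$ together with shift isomorphisms $H^{p-1}(A) \cong H^p(\tvom)$ for all $1 \leq p \leq 2s$.

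Next, the long exact sequence of the first short exact sequence gives $H^p(A) \cong H^p(\tvom)$ for $p \notin \{s,s+1\}$. Combining this with the shift isomorphisms produces $H^p(A) \cong H^{p-1}(A)$ for $s+2 \leq p \leq 2s$, so telescoping down from the anchor $H^{2s}(A) = \K$ yields $H^p(A) = \K$ for every $p \in [s+1,2s]$. The vanishing outside this range is immediate since $A^p = 0$ there, and the cohomology of $\tvom$ is then read off from $H^p(\tvom) \cong H^{p-1}(A)$ (for $1 \leq p \leq 2s$), giving the stated ranges.

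The cases $s=0$ and $s=1$ should be handled by inspection. For $s=0$, $\voc_{0,0}$ is concentrated in degree zero, so $A=0$ and $\tvom = \K$ in degree $0$. For $s=1$, a direct calculation shows that the generator of $\tvom^1$ maps under $d$ to the symmetric element of $\tvom^2 = (1+\sigma)\voc_{1,1}^2$, which is a basis element, so $d$ is an isomorphism there; this yields $H^*(\tvom) = 0$ and $H^*(A) = \K$ concentrated in degree $2$. The main obstacle has really been isolated into Lemma~\ref{L:upsilonA}; modulo that, the argument is a clean telescoping between two complementary short exact sequences.
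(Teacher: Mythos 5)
Your proposal is correct and follows essentially the same strategy as the paper: both hinge on the long exact sequence of $0 \to \tvom \to \voc_{s,s} \to A \to 0$, Theorem~\ref{T:asymmetric}, and Lemma~\ref{L:upsilonA} to anchor $H^{2s}(A) = \K$ and obtain the shift $H^{p-1}(A) \cong H^p(\tvom)$, then telescope. The only cosmetic difference is that you package the comparison of $A$ and $\tvom$ as a second short exact sequence $0 \to A \to \tvom \to \K[s] \to 0$ (using $(1+\sigma)^2 = 0$), whereas the paper just notes directly that $A^p = \tvom^p$ for $p > s$ forces $H^p(A) = H^p(\tvom)$ for $p \geq s+2$; both routes yield the same ladder of isomorphisms, and your explicit $d\colon \tvom^1 \to \tvom^2$ computation for $s=1$ is a fine alternative to the paper's short exact sequence chase there.
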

\begin{proof}
We use the long exact cohomology sequence associated to the short exact sequence
\[ 0 \to \tvom \to \voc_{s,s} \to A \to 0 \] as well as Theorem~\ref{T:asymmetric}, which says that $H^{2s} \voc_{s,s} = \K$ and $H^p \voc_{s,s} = 0$ for $p\neq 2s$.

If $s=0$ then $A=0$, so $\tvom = \voc_{0,0}$ and the result is obvious.

If $s=1$, then $A^p=0$ for $p\neq 2$ and $A^2\neq 0$, so exactness of 
\[ \xymatrix@R=.8pc{
0 \ar@{->}[r] &  H^{2} \tvom  \ar@{->}[r] & H^{2} \voc_{1,1} \ar@{->}[r] \ar@{=}[d] & H^{2} A \ar@{->}[r] \ar@{=}[d] & 0\\
&& \K & A^2
}\] 
implies $H^2 \tvom =0$ and $H^2A=\K$.

Suppose that $s\geq 2$. Examine the exact sequence
\[ 
0 \to H^{2s-1} A  \to H^{2s} \tvom  \to H^{2s} \voc_{s,s} \to  H^{2s} A \to   0.\]
By Lemma~\ref{L:upsilonA}, $H^{2s} \tvom \to H^{2s} \voc_{s,s}$ is zero. 
Thus we have $\K = H^{2s} \voc_{s,s} = H^{2s} A$, and $H^{2s-1} A \to H^{2s} \tvom = H^{2s} A = \K$ is an isomorphism.  For $p < 2s$, we have that 
\[ 0 \to H^{p-1} A \to H^p \tvom \to 0 \] is exact, so for $s+2 \leq p < 2s$ we have
\[ H^{p-1} A = H^p \tvom = H^p A = \K. \] To finish this case, notice that $A^{p-1} =0$ for $p-1 \leq s$, so 
$0 = H^{p-1} A = H^p \tvom$ for $p\leq s+1$.
\end{proof}

\begin{thm}\label{T:symmetric} The cohomology of $\vom_s$ is
\[ H^n \vom_s = \begin{cases} \K & n \in [s,2s] \\ 0 &\text{otherwise.} \end{cases} \]
\end{thm}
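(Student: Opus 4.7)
The plan is to feed the short exact sequence
\[ 0 \to A \to \voc_{s,s} \to \vom_s \to 0 \]
set up just before the theorem into the long exact sequence in cohomology. The two outer complexes are fully understood: Theorem~\ref{T:asymmetric} gives $H^n(\voc_{s,s}) = \K$ for $n = 2s$ and zero otherwise, and Proposition~\ref{P:cohomatvom} gives $H^n(A) = \K$ for $n \in [s+1, 2s]$ (assuming $s \geq 1$) and zero elsewhere. The vanishing of $H^n(\vom_s)$ for $n \notin [s, 2s]$ is automatic, since $\vo^p_{s,s}$ is empty outside that range of $p$ and hence $\vom_s^n$ itself is zero there.

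Assume $s \geq 2$. For $n$ in the interior range $[s, 2s-2]$, both $H^n(\voc_{s,s})$ and $H^{n+1}(\voc_{s,s})$ vanish, so the long exact sequence collapses to an isomorphism $H^n(\vom_s) \cong H^{n+1}(A) = \K$. The delicate piece is the top, where the six-term segment
\[ 0 \to H^{2s-1}(\vom_s) \to H^{2s}(A) \to H^{2s}(\voc_{s,s}) \to H^{2s}(\vom_s) \to 0 \]
is sandwiched between two copies of $\K$. The \textbf{main obstacle} is to control the middle arrow $H^{2s}(A) \to H^{2s}(\voc_{s,s})$: \emph{a priori} it could be an isomorphism, which would force $H^{2s-1}(\vom_s) = 0$ and contradict the desired count. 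Fortunately this is precisely the content of Lemma~\ref{L:upsilonA}, which was the reason for introducing the auxiliary complex $\tvom$ just above. With that middle arrow zero, exactness gives $H^{2s-1}(\vom_s) \cong \K$ and $H^{2s}(\vom_s) \cong \K$, completing the computation throughout $[s, 2s]$.

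Two edge cases remain. When $s = 0$ we have $A = 0$, so $\vom_0 = \voc_{0,0}$ and the theorem is immediate from Theorem~\ref{T:asymmetric}. When $s = 1$ the interior range $[s,2s-2]=[1,0]$ is empty and Lemma~\ref{L:upsilonA} does not apply, but $\vom_1$ is small enough to handle by hand: it is concentrated in degrees $1$ and $2$, each of rank one, and a direct calculation shows that $d^0$ applied to the generator of $\vom_1^1$ lands in the symmetric part $(1+\sigma)\voc_{1,1}^2 = A^2$, hence maps to zero in $\vom_1^2$ modulo $2$. The sole differential is therefore zero and $H^1(\vom_1) = H^2(\vom_1) = \K$, as required.
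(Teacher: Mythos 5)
Your argument is correct and follows the paper's own approach: the same short exact sequence, the same long exact cohomology sequence, and the same invocation of Lemma~\ref{L:upsilonA} to kill the arrow $H^{2s}(A) \to H^{2s}(\voc_{s,s})$ when $s \geq 2$. The only deviation is the $s=1$ case: the paper argues that a cycle $\alpha \in A^2 = \tvom^2$ is already a boundary in $\tvom$ (since Proposition~\ref{P:cohomatvom} gives $H^2\tvom = 0$), hence a boundary in $\voc_{1,1}$, so the map $H^2 A \to H^2\voc_{1,1}$ is zero and the LES applies uniformly; you instead observe that $\vom_1$ is rank one in degrees $1$ and $2$ and compute directly that $d^0(\id_{[1]} \ten \id_{[1]})$ has image $\ep_1\ten\ep_2 + \ep_2\ten\ep_1 \in (1+\sigma)\voc_{1,1}^2$ after conormalization, so the single differential vanishes. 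Both are valid; yours is slightly more self-contained (no appeal to $\tvom$ in this case), while the paper's keeps the LES argument uniform across $s \geq 1$.
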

\begin{proof}
We use the exact sequence \[ 0 \to A \to \voc_{s,s} \to \vom_s \to 0,\] Theorem~\ref{T:asymmetric}, and Proposition~\ref{P:cohomatvom}. 
Notice immediately that $H^{i-1} \vom_s \cong H^i A$ for $i<2s$, so we are reduced to analyzing the exact sequence
\[ 0 \to H^{2s-1} \vom_s \to H^{2s} A \to H^{2s} \voc_{s,s} \to H^{2s} \vom_s \to 0. \]
We merely need to show that $H^{2s} A \to H^{2s} \voc_{s,s}$ is always zero.
When $s\geq 2$, 
this is just Lemma~\ref{L:upsilonA}.
If $s=1$, and $\alpha \in A^2 = \tvom^2$ is a cycle, then $\alpha$ is a boundary in $\tvom$ since $H^2 \tvom =0$, hence $\alpha$ is a boundary in $H^2(\voc_{1,1})$. Finally, $H^0 A \to H^0 \voc_{0,0}$ is trivially zero since $A=0$ when $s=0$.
\end{proof}

\section{The \texorpdfstring{$E^2$}{E-2} page}\label{S:e2}

We now record the $E^2$ page of the spectral sequence. See Figure~\ref{F:e2orig}. 

\begin{figure}[ht]
\centering
 {
\begin{tikzpicture}
	\draw[->] (-.65,0) -- (-8,0);		
	\draw (0.2,0) -- (-.35,0);		
	\draw (-.75,-.1) -- (-.55,.1);	
	\draw (-.45,-.1) -- (-.25,.1);	
    \draw[->] (0,-0.2) -- (0,5);
    \draw (-1,2pt) -- (-1,-2pt) node[below=.01] {$-s$};
    \draw (-2.5,2pt) -- (-2.5,-2pt) node[below=.01] {$-s-r$};
    \draw (-4,2pt) -- (-4,-2pt) node[below=.01] {$-2s$};
    \draw (-5.5,2pt) -- (-5.5,-2pt) node[below=.01] {$-2s-r$};
    \draw (-7,2pt) -- (-7,-2pt) node[below left=.01 and -.6] {$-2s-2r$};
    \draw (2pt,1) -- (-2pt,1) node[right] {$2t$};
    \draw (2pt,2) -- (-2pt,2) node[right] {$2t+r-1$};
    \draw (2pt,3) -- (-2pt,3) node[right] {$2t+2r-2$};
    \fill (-5.5,2) circle (2pt);
    \draw [->,very thick] (-4,1) -- (-1,1) -- (-1,5);
    \draw [->,very thick] (-7,3) -- (-2.5,3) -- (-2.5,5);
\end{tikzpicture} 
}
\caption{$E^2(  \e(D_{rst}))$}\label{F:e2orig}
\end{figure}

\begin{thm}\label{T:e2page} Let $2\leq r \leq \infty$. Each bidegree in the $E^2$ page of the spectral sequence for $\e(D_{rst})$  is either $0$ or $\K$. The nontrivial terms are in
\begin{gather*}
\set{-s}\times [2t, \infty) \\
[-2s,-s-1] \times \set{2t} \\
\end{gather*}
and, if $r < \infty$, in
\begin{gather*}
\set{-s-r} \times [2t+2r-2, \infty) \\
\set{-2s-r} \times \set{2t+r-1} \\
[-2s-2r,-s-r-1] \times \set{2t+2r-2}.
\end{gather*}
\end{thm}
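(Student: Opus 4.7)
The plan is to compute $E^2$ row by row, exploiting the fact that $d^1 : E^1_{-p,q} \to E^1_{-p-1,q}$ preserves $q$, so each fixed-$q$ subcomplex of $E^1$ contributes to $E^2$ independently. By Theorem~\ref{T:e1basis}, the basis of $E^1$ splits cleanly into three ``horizontal strip'' families at heights $q = 2t$, $q = 2t+r-1$, $q = 2t+2r-2$, together with two ``vertical ray'' families $e_m \otimes \id_{[s]} \otimes \id_{[s]}$ at column $-s$ and (when $r < \infty$) $e_m \otimes \id_{[s+r]} \otimes \id_{[s+r]}$ at column $-s-r$. I would organize the calculation according to which row each of these basis elements lies in.

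For the three special rows $q \in \{2t,\, 2t+r-1,\, 2t+2r-2\}$, Proposition~\ref{P:comparison} identifies each row (including any vertical-ray generator sharing its height, namely $e_0 \otimes \id_{[s]} \otimes \id_{[s]}$ in the bottom row and, when $r<\infty$, $e_0 \otimes \id_{[s+r]} \otimes \id_{[s+r]}$ in the top row) with the cochain complex $\vom_s$, $\voc_{s,s+r}$, and $\vom_{s+r}$ respectively. Then Theorem~\ref{T:symmetric} gives $H^n \vom_s = \K$ for $n \in [s, 2s]$, producing the strip $[-2s, -s] \times \{2t\}$; Theorem~\ref{T:asymmetric} gives $H^n \voc_{s,s+r} = \K$ exactly at $n = 2s+r$, producing the single class at $(-2s-r,\, 2t+r-1)$; and Theorem~\ref{T:symmetric} applied to $\vom_{s+r}$ produces the top strip $[-2s-2r, -s-r] \times \{2t+2r-2\}$.

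For the remaining rows $q = 2t + m$ with $m \geq 1$ and $q \notin \{2t+r-1,\, 2t+2r-2\}$, only isolated vertical-ray generators appear. Using $r \geq 2$, I would verify by inspection of the $E^1$ basis that no basis element exists in the neighboring columns at these heights: for column $-s$ this follows because $\h_s^{s-1} = \emptyset$ (so $E^1_{-s+1,*}=0$) and because $\h_{s+r}^{s+1} = \emptyset$ kills middle- and top-strip contributions at $(-s-1,\, 2t+r-1)$ and $(-s-1,\, 2t+2r-2)$; the analogous check at column $-s-r$ is similar. Hence $d^1$ vanishes on each isolated ray generator and $E^2 = E^1 = \K$ at those bidegrees, which together with the $m=0$ and $m=2r-2$ intersections gives the rays $\{-s\} \times [2t, \infty)$ and $\{-s-r\} \times [2t+2r-2, \infty)$. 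The case $r=\infty$ is handled identically, only that the middle and top strips as well as the second ray are absent.

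The main obstacle is not analytic but combinatorial: one must take care, at the two points $(-s,\, 2t)$ and $(-s-r,\, 2t+2r-2)$ where a vertical ray meets a horizontal strip, that the ``shared'' generator is accounted for exactly once, and that the $\psi$-isomorphisms of Proposition~\ref{P:comparison} genuinely include these diagonal elements in their source. Once that compatibility is confirmed, the two analyses agree on the overlap and assemble into precisely the shape described in the theorem statement and drawn in Figure~\ref{F:e2orig}.
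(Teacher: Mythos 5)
Your proposal is correct and follows essentially the same route as the paper's proof: use Theorem~\ref{T:e1basis} to identify the isolated vertical-ray generators (which survive to $E^2$ because the adjacent columns vanish at those heights, since $\h_s^{s-1}=\emptyset$ and, for $r\geq 2$, $\h_{s+r}^{s+1}=\emptyset$), and apply Proposition~\ref{P:comparison} together with Theorems~\ref{T:asymmetric} and~\ref{T:symmetric} to compute the three horizontal rows. The only difference is that you spell out the bookkeeping at the two corner points $(-s,2t)$ and $(-s-r,2t+2r-2)$, which the paper leaves implicit.
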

\begin{proof}
Theorem~\ref{T:e1basis} gives the $E^1$ page. The structure of that page gives the ranges $\set{-s} \times (2t,\infty)$ and $\set{-s-r}\times (2t+2r-2,\infty)$. Theorems~\ref{T:asymmetric} and \ref{T:symmetric} combine with Proposition~\ref{P:comparison} to give the rest.
\end{proof}

Notice that when $r=\infty$, the structure of the $E^2$ page implies that all further differentials are zero, so
\[ E_{-p,q}^\infty = \begin{cases} \K & \text{if } q=2t \text{ and } -p\in [-2s,-s], \\ \K & \text{if } p=s \text{ and } q\geq 2t, \\ 0 & \text{else.}\end{cases} \]
We will compute the differentials for finite $r$ in the next two sections.

\section{\texorpdfstring{$E^\infty=0$  when $r< \infty$}{E-\Uinfty=0 when r<\Uinfty}}\label{S:einf}

Let $D=D_{rss}$ for $r<\infty$. The goal of this section is contained in its title: we wish to show that $E^\infty (\e(D)) = 0$.

\begin{lem}\label{L:ddfinite}
If $r$ is finite, then the bicomplex $C(D \ten D)$ is finite.
\end{lem}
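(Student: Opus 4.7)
My plan is to show directly that the bicomplex $C(D\ten D)$ has only finitely many nonzero bidegrees and that each is finite-dimensional over $\K$. I will bound both the chain degree $q$ and the cosimplicial degree $p$ of any nonzero $C(D\ten D)^p_q$, using the explicit basis descriptions already developed.

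For the chain direction, by construction $D^p = D^p_{rss}$ is a chain complex concentrated in degrees $[s,s+r-1]$, with basis in dimension $i$ given by the injections $[i]\hookrightarrow [p]$. Hence $(D\ten D)^p = D^p\ten D^p$ is concentrated in chain degrees $[2s, 2s+2r-2]$, which is a finite range precisely because $r<\infty$. Since conormalization is a levelwise quotient, the same range bound persists: $C(D\ten D)^p_q = 0$ whenever $q\notin[2s,2s+2r-2]$.

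For the cosimplicial direction, I will run a basis analysis parallel to the one in Subsection~\ref{SS:conorm} that produced Theorem~\ref{T:e1basis}. The diagonal coface $d^k$ sends $\alpha\ten\beta$ to $(d^k\alpha)\ten(d^k\beta)$, a basis element in which both factors miss $k$. Consequently, a basis element $\alpha\ten\beta$ of $(D^p\ten D^p)$ represents a nonzero class in $C(D\ten D)^p$ if and only if $[1,p]\ci \im(\alpha)\cup\im(\beta)$. Because $\alpha:[i]\hookrightarrow[p]$ and $\beta:[j]\hookrightarrow[p]$ with $i,j\leq s+r-1$, we have $|\im(\alpha)\cup\im(\beta)|\leq 2(s+r)$, forcing $p\leq 2s+2r$.

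Combining the two bounds, $C(D\ten D)^p_q$ is nonzero only for $(p,q)$ in the finite region $[0,2s+2r]\times[2s, 2s+2r-2]$, and every such entry is a quotient of a finite-dimensional space and hence finite. This makes $C(D\ten D)$ finite as a bicomplex. I do not expect any real obstacle; the only delicate point is the identification of a basis for the conormalization, but this is a direct translation of the image-of-coface argument already carried out in Subsection~\ref{SS:conorm}.
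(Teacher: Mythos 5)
Your proof is correct and follows the same route as the paper: identify a basis for $C(D\ten D)^p_q$ consisting of tensors $\alpha\ten\beta$ of injections landing in $[p]$ with $[1,p]\ci\im\alpha\cup\im\beta$, then read off the bounds $q\in[2s,2s+2r-2]$ and $p\leq 2(s+r)$ from the degree constraints on $D_{rss}$.
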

\begin{proof}
The vector space $C(D\tensor D)^p_m$ has a basis consisting of elements $\ep \tensor \ep'$ where $\ep: [q] \hookrightarrow [p]$, $\ep': [q']\hookrightarrow [p]$, $q+q'=m$, and $[1,p] \ci \im \ep \cup \im \ep'$. Furthermore, since we are working in $D_{rss}$ we require that $q,q' \in [s,s+r-1]$. Thus we see that $C(D\ten D)^p_m$ is zero unless $m\in [2s,2(s+r-1)]$  and $p\in[s,2(s+r)]$, so $C(D\ten D)$ is bounded. Furthermore, each $C(D\ten D)^p_m$ is finite.
\end{proof}

\begin{prop}\label{P:justD}
For $r$ finite we have $H\Tot C(D\tensor D)=0$.
\end{prop}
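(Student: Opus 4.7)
The plan is to show that some finite page of the column-filtration spectral sequence associated to $C(D\ten D)$ already vanishes, and then invoke strong convergence. Since $C(D\ten D)$ is a bounded bicomplex with finite-dimensional pieces by Lemma~\ref{L:ddfinite}, the associated spectral sequence strongly converges to $H\Tot C(D\ten D)$, so it is enough to exhibit any finite page on which everything is zero.

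To identify that page I would string together two Künneth-type results already established. By Lemma~\ref{L:tensbicplx} applied with $B=B'=C(D)$, there is an isomorphism
\[ E^k(C(D)) \ten E^k(C(D)) \overset{\cong}{\to} E^k\bigl(C(D)\ten C(D)\bigr) \]
for every $k\geq 0$; and by Proposition~\ref{P:easye2}, the Alexander-Whitney map induces an isomorphism
\[ E^k\bigl(C(D)\ten C(D)\bigr) \overset{\cong}{\to} E^k\bigl(C(D\ten D)\bigr) \]
for every $k\geq 2$. Composing these and feeding in Proposition~\ref{P:univexamplecalc}, which gives $E^{r+1}(D)=0$ for finite $r$, yields
\[ E^{r+1}\bigl(C(D\ten D)\bigr) \cong E^{r+1}(D)\ten E^{r+1}(D) = 0 \]
at the finite page $k=r+1$.

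All later pages must vanish as well, so $E^\infty\bigl(C(D\ten D)\bigr)=0$ and the desired conclusion $H\Tot C(D\ten D)=0$ follows by strong convergence. There is no real obstacle here: once the two Künneth-type isomorphisms are in hand and one notices that the universal example $D$ is already annihilated by page $r+1$, the only thing worth checking is that Proposition~\ref{P:easye2} applies at the page under consideration, which is immediate since the standing assumption $r\geq 2$ gives $r+1\geq 3\geq 2$.
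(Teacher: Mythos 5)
Your proof is correct and takes essentially the same route as the paper: bounded bicomplex gives convergence, the Künneth/Alexander-Whitney isomorphisms identify the spectral sequence for $C(D\ten D)$ with the tensor square of the spectral sequence for $D$, and vanishing of the latter (Proposition~\ref{P:univexamplecalc}) finishes the job. The only cosmetic difference is that you pin down the explicit finite page $r+1$ where the whole $E$-page vanishes, while the paper passes directly to $E^\infty$; also note that citing both Lemma~\ref{L:tensbicplx} and Proposition~\ref{P:easye2} is slightly redundant since the latter already subsumes the former.
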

\begin{proof}
Lemma~\ref{L:ddfinite} implies convergence, so we have
\[ H\Tot C(D\ten D) \cong \Tot E^\infty (D\ten D) \cong \Tot [E^\infty (D) \ten E^\infty (D)]\] by Proposition~\ref{P:easye2}. We saw in section~\ref{S:examplehomology} that $E^\infty(D_{rss}) =0$ for $r<\infty$. 
\end{proof}

We will need similar algebraic convergence results for $\e(D_{rst})$. 

\begin{thm}\label{T:algconv}
Let $2\leq r \leq \infty$. Then
\[ H_* \Tot C(\e(D_{rst})) \cong \Tot E^\infty (\e(D_{rst})). \]
\end{thm}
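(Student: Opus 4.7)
The approach is to establish strong convergence of the spectral sequence via a standard criterion, and then upgrade this to the direct-sum identification using finiteness of $E^\infty$ in each total degree.

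The column filtration on $\Tot C(\e(D_{rst}))$ is automatically complete (product totalization guarantees $\Tot \cong \varprojlim_s \Tot/F^{-s}$) and Hausdorff ($\bigcap_s F^{-s} = 0$, since any nonzero tuple has a lowest nonzero component). A standard strong-convergence criterion for such filtrations (along the lines of Boardman's conditional-convergence theorem) then reduces the theorem to showing $R^1 \varprojlim_r E^r_{p,q} = 0$ at each bidegree, which in turn follows as soon as $(E^r_{p,q})_r$ stabilizes at some finite stage.

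Stabilization I would verify case by case. When $r < \infty$, Theorem~\ref{T:e1basis} confines the support of $E^1$, and hence of every $E^{r'}$, to a bounded range of cosimplicial degrees contained in $[-2s-2r,-s]$; since $\ssd^{r'}$ shifts the cosimplicial coordinate by $-r'$, once $r'$ exceeds the width of this range the target of $\ssd^{r'}$ falls outside the support and the differential vanishes. When $r=\infty$, Theorem~\ref{T:e2page} shows $E^2$ is concentrated on a vertical ray $\{-s\}\times[2t,\infty)$ together with a horizontal segment $[-2s,-s-1]\times\{2t\}$, and no differential $\ssd^{r'}$ with $r'\geq 2$ can connect these pieces for dimensional reasons, so $E^2=E^\infty$ automatically.

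Strong convergence then provides a filtration of $H_n\Tot C(\e(D_{rst}))$ with successive quotients $E^\infty_{-s,n+s}$. Inspection of Theorem~\ref{T:e2page} shows each diagonal $p+q=n$ meets the nontrivial support of $E^\infty$ in only finitely many bidegrees, so the filtration on $H_n$ is of finite length; since $\K$ is a field, it splits and yields
\[ H_n \Tot C(\e(D_{rst})) \cong \bigoplus_s E^\infty_{-s,n+s} = (\Tot E^\infty)_n. \]
The main obstacle will be the stabilization step, especially for $r<\infty$ where one must track how the higher differentials $\ssd^{r'}$ propagate within the bounded support of $E^1$ before vanishing; once stabilization is secured the rest of the argument is formal.
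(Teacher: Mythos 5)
Your proposal is correct in substance and follows the same overall strategy as the paper (complete Hausdorff column filtration, Boardman conditional convergence, a degeneration condition to upgrade to strong convergence), but the two arguments differ in how they secure stabilization. The paper gives a single uniform argument: since each $E^1_{-p,q}$ is a \emph{finite} $\K$-module, only finitely many of the differentials $\ssd^j: E^j_{-p,q} \to E^j_{-p-j,q+j-1}$ can be nonzero for any fixed $(p,q)$, and then strong convergence follows from \cite[Theorem~7.1]{boardman} and the remark after it. You instead split into cases: for $r<\infty$ you use the horizontal boundedness of the $E^1$-support from Theorem~\ref{T:e1basis} so that $\ssd^{r'}$ must vanish once $r'$ exceeds the width, and for $r=\infty$ you use Theorem~\ref{T:e2page} to see directly that $E^2 = E^\infty$. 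Both routes establish degeneration; the paper's finiteness argument avoids the case split and doesn't rely on horizontal boundedness (which fails in the $r=\infty$ case). One small imprecision: Boardman's obstruction $RE^\infty$ is not literally $\varprojlim^1_r E^r_{p,q}$ (it is built from the towers of cycles), so you shouldn't present vanishing of $\varprojlim^1_r E^r_{p,q}$ as the exact criterion; nevertheless the eventual stabilization you verify does kill Boardman's $RE^\infty$ by Mittag--Leffler, so the conclusion is unaffected. Your final step---noting that each antidiagonal meets the support of $E^\infty$ in finitely many spots, so the filtration on $H_n$ has finite length and splits over the field $\K$, identifying $H_n$ with $(\Tot E^\infty)_n$---is a correct elaboration that the paper leaves implicit. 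Your closing worry that tracking higher differentials is still an obstacle is unfounded: the width bound already forces all $\ssd^{r'}$ to vanish for $r'$ past a fixed threshold, which is all that is required.
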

\begin{proof}
Write $B=C(\e(D_{rst}))$. 
Note that $\Tot B$ is the completion (as a filtered complex) of the sum totalization $(\Tot^\oplus B)_m = \oplus_{p+q=m} B_{p,q}$. 
Reindexing so that we have a fourth quadrant, cohomological bicomplex with vertical differential $\hd$ and horizontal differential $\cd$, we have conditional convergence of our spectral sequence by \cite[Theorem 10.1]{boardman}.

Since each group $E^1_{-p,q}$ is a finite $\K$-module, only finitely many differentials $\ssd^j: E^j_{-p,q} \to E^j_{-p-j,q+j-1}$ may be nonzero. Thus we have strong convergence of our spectral sequence by \cite[Theorem 7.1]{boardman} and the Remark immediately following it. 
%
%
%
\end{proof}

The following proposition works over any ground ring and, in particular, gives
\[ C(W\tp (D\ten D)) \cong W^v \tp C(D\ten D),\] where $W^v$ is the bicomplex concentrated on the $y$-axis with $W^v_{0,*} = W_*$.

\begin{prop}\label{P:bringoutvert}
Let $X_*$ be a chain complex and let $X^v_*$ be the bicomplex which has $X$ as its zeroth column. If $Y_*^\bullet$ is a cosimplicial chain complex then \[ C(X_* \tensor Y_*^\bullet) \cong X^v_* \tensor C(Y_*^\bullet). \]  \qed
\end{prop}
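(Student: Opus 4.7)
The content of the statement is essentially bookkeeping: tensoring with the chain complex $X_*$ (viewed as a constant cosimplicial chain complex) commutes with the cokernel defining conormalization. So the plan is to unfold both sides and match the pieces.

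First I would make explicit the cosimplicial structure on $X_* \tensor Y_*^\bullet$. Regarding $X_*$ as a constant cosimplicial chain complex, the tensor product has $(X \tensor Y)^p = X \tensor Y^p$ with coface $d^k_{X\otimes Y} = \id_X \tensor d^k_Y$. Next I would exploit that $X \tensor -\colon \ch \to \ch$ is a left adjoint (explicitly exact, since the paper works over the field $\K$, though exactness is not even needed), so it preserves cokernels. Applying it to the defining presentation
\[
\bigoplus_{k=1}^{p} Y^{p-1} \xrightarrow{\;\bigoplus d^k\;} Y^p \twoheadrightarrow CY^p
\]
yields a natural isomorphism $C(X \tensor Y)^p \cong X \tensor CY^p$, which in bidegree $(-p,q)$ unpacks to $\bigoplus_{m+n=q} X_m \tensor CY^p_n$. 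The same bidegree in $X^v \tensor C(Y)$ equals $\bigoplus_{i+j=-p,\, m+n=q} X^v_{i,m} \tensor C(Y)_{j,n}$, and since $X^v$ is concentrated in column $0$ only the $i=0$, $j=-p$ summand survives, giving the same expression. This produces the desired bidegreewise isomorphism $\Phi\colon C(X\tensor Y) \to X^v \tensor C(Y)$.

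It remains to verify that $\Phi$ intertwines the two differentials. The vertical differential on $C(X \tensor Y)^p$ is the internal differential of the chain complex $X \tensor Y^p$, which is $d_X \tensor 1 + 1 \tensor d_{Y^p}$; on $X^v \tensor C(Y)$ the vertical differential is $d_{X^v}^v \tensor 1 + 1 \tensor \hd$, and these agree under $\Phi$ since $d_{X^v}^v = d_X$ and $\hd$ on $CY^p$ is $d_{Y^p}$. The horizontal (conormalized) differential on $C(X\tensor Y)$ is induced from $d^0_{X\tensor Y} = \id \tensor d^0_Y$, while on $X^v \tensor C(Y)$ the horizontal differential is $d_{X^v}^h \tensor 1 + 1 \tensor \cd = 1 \tensor \cd$ since $X^v$ has no horizontal differential; under $\Phi$ both reduce to $1 \tensor \cd$. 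Naturality of $\Phi$ in $X$ and $Y$ is immediate from naturality of the cokernel.

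There is no real obstacle here beyond keeping the bigrading conventions straight (the paper indexes $CY$ in bidegree $(-p,q)$ and places $X^v$ on the $y$-axis). In characteristic two the sign issues that might otherwise arise in comparing the two differentials are absent, which matches the paper's convention; but the argument via preservation of cokernels by $X \tensor -$ works over any ground ring with the standard Koszul signs, justifying the parenthetical remark preceding the proposition.
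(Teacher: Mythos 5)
The paper gives no proof here — the proposition is marked with an immediate \qed, signalling that the author regarded it as routine bookkeeping — so there is nothing to compare against. Your argument is correct and is the natural one: $X\tensor -$ preserves cokernels, so it commutes with the conormalization \eqref{E:conorm} in each cosimplicial degree, and the degreewise identification $C(X\tensor Y)^p_q \cong \bigoplus_{m+n=q} X_m \tensor CY^p_n \cong (X^v\tensor CY)_{-p,q}$ together with the differential check closes the proof. Your final remark about signs also correctly accounts for the paper's parenthetical claim that the statement holds over an arbitrary ground ring.
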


Finiteness of $C(D\tensor D)$ allows us to conclude that
\[ \Tot C(\e(D)) = \Tot C(W\tp (D\tensor D)) \cong W\tp \Tot C(D\tensor D). \]
Furthermore, the functor $W\tp -$ preserves quasi-isomorphism.

\begin{prop}\label{P:quasi}
Suppose that $L\to L'$ is a map of nonnegatively-graded $\K\pi$-complexes which induces an isomorphism in homology. Then \[ H(W\tp L) \to H(W\tp L') \] is an isomorphism as well.
\end{prop}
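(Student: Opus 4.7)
The plan is to form a bicomplex whose total complex is $W\tp L$ and then to run a spectral sequence comparison argument. Put $B_{p,q} = W_p \tp L_q$, so that the total complex of $B$ is $W\tp L$, and form $B'$ with $L'$ in place of $L$. The map $L\to L'$ induces a morphism of bicomplexes $B\to B'$. Filter by columns (i.e.\ by $W$-degree) to obtain a filtration $F_p = (W_{\leq p})\tp L$ on $W\tp L$, and likewise on $W\tp L'$. In each total degree $n$, nonnegative grading of $W$ and $L$ gives $F_{-1}(W\tp L)_n = 0$ and $F_n(W\tp L)_n = (W\tp L)_n$, so the filtration is bounded and the associated spectral sequence converges strongly.

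The $E^0$-differential is induced by $d_L$ alone, since $d_W$ strictly decreases filtration degree. Because each $W_p = \K\pi \cdot e_p$ is free of rank one over $\K\pi$, there is a natural isomorphism $W_p \tp L \cong L$ of $\K$-complexes, and hence
\[ E^1_{p,q}(B) \cong H_q(L) \qquad \text{and} \qquad E^1_{p,q}(B') \cong H_q(L'). \]
Under these identifications the induced map is the quasi-isomorphism $H_q(L) \to H_q(L')$ in every bidegree, hence an isomorphism on $E^1$.

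The comparison theorem for strongly convergent spectral sequences then yields that $B \to B'$ induces an isomorphism on $E^\infty$, and by bounded convergence in each total degree we conclude that $H(W\tp L) \to H(W\tp L')$ is an isomorphism. The only subtle points are the rank-one freeness $W_p \cong \K\pi \cdot e_p$ that collapses the $\K\pi$-structure at $E^1$ to ordinary $\K$-linear algebra, and the observation that nonnegative grading of both $W$ and $L$ is precisely what forces the column filtration to be bounded in each total degree, so that no further finiteness hypothesis on $L$ is needed; I do not expect a genuine obstacle beyond these bookkeeping points.
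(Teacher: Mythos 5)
Your proof is correct and takes essentially the same approach as the paper: the paper cites the K\"unneth spectral sequence $E_2^{p,q}=\bigoplus \Tor^p_{\K\pi}(H^s(W),H^t(L))\Rightarrow H(W\tp L)$, which is precisely the column-filtration spectral sequence of the bicomplex $W_p\tp L_q$ that you construct by hand, and both proofs conclude via strong convergence (first-quadrant in the paper, bounded-in-each-total-degree in yours) together with the comparison theorem after an isomorphism on an early page. The only stylistic difference is that you unpack the K\"unneth spectral sequence into its defining filtration and stop at $E^1$, using the rank-one freeness $W_p\cong\K\pi$ to identify $E^1_{p,q}\cong H_q(L)$, whereas the paper invokes the packaged theorem and observes the isomorphism at $E_2$.
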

\begin{proof}
The K\"unneth spectral sequence (see \cite[Theorem 2.20]{mccleary})
\[ E_2^{p,q} = \bigoplus_{s+t=q} \Tor_{\K\pi}^p (H^s(W), H^t(L)) \Rightarrow H(W\tp L) \] is a first-quadrant spectral sequence, so it converges. The map $L\to L'$ induces an isomorphism on $E_2$.
\end{proof}

\begin{prop} For $r$ finite we have
$\Tot E^\infty (\e(D_{rst})) = H\Tot C(\e(D_{rst})) = 0$.
\end{prop}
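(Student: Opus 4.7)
The first equality $\Tot E^\infty(\e(D_{rst})) = H\Tot C(\e(D_{rst}))$ is immediate from Theorem~\ref{T:algconv}, so the real work is to show that $H\Tot C(\e(D_{rst})) = 0$. Since $\e(D_{rst}) \cong \Sigma^{2t-2s}\e(D_{rss})$ levelwise, and suspension merely shifts the total complex, it suffices to handle the case $t=s$; write $D = D_{rss}$.

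The plan is to reduce the computation to the already-established acyclicity of $\Tot C(D\ten D)$ from Proposition~\ref{P:justD}. First I would invoke Proposition~\ref{P:bringoutvert} with $X_* = W$ and $Y_*^\bullet = D\ten D$ to obtain an isomorphism of bicomplexes
\[ C(\e(D)) = C(W\tp (D\ten D)) \cong W^v \tp C(D\ten D). \]
Because $C(D\ten D)$ is a bounded bicomplex with finite-dimensional bidegrees (Lemma~\ref{L:ddfinite}), the product totalization commutes with the tensor by the fixed complex $W$, yielding
\[ \Tot C(\e(D)) \cong W \tp \Tot C(D\ten D), \]
as remarked in the paragraph immediately preceding the statement.

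Now Proposition~\ref{P:justD} says that $\Tot C(D\ten D)$ is acyclic, i.e.\ the zero map $\Tot C(D\ten D) \to 0$ is a quasi-isomorphism of nonnegatively-graded $\K\pi$-complexes. Applying Proposition~\ref{P:quasi} to this quasi-isomorphism gives that $W\tp \Tot C(D\ten D)$ is acyclic as well, so
\[ H\Tot C(\e(D)) \cong H(W\tp \Tot C(D\ten D)) = 0, \]
which, combined with Theorem~\ref{T:algconv}, finishes the proof.

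There is essentially no obstacle here: all the substantive content is packaged into the earlier lemmas and propositions. The one point worth double-checking is that $\Tot C(D\ten D)$ is indeed a nonnegatively-graded $\K\pi$-complex suitable for feeding into Proposition~\ref{P:quasi}; this is guaranteed by the boundedness established in Lemma~\ref{L:ddfinite} together with the obvious $\pi$-action from swapping tensor factors.
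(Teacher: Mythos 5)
Your proof is correct and takes essentially the same route as the paper: identify $\Tot C(\e(D)) \cong W\tp \Tot C(D\ten D)$ via Proposition~\ref{P:bringoutvert} and the finiteness of Lemma~\ref{L:ddfinite}, then combine Proposition~\ref{P:justD} with the quasi-isomorphism invariance of $W\tp(-)$ from Proposition~\ref{P:quasi}, and finish with the convergence statement of Theorem~\ref{T:algconv}. The only difference is that you make the suspension reduction to $t=s$ and the application of Proposition~\ref{P:quasi} to the map $\Tot C(D\ten D)\to 0$ explicit, whereas the paper leaves these implicit.
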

\begin{proof} We already saw that $\Tot C(\e(D_{rst})) \cong W\tp \Tot C(D_{rst} \ten D_{rst})$, so we have $H\Tot C(\e(D_{rst})) =0$ by Propositions~\ref{P:justD} and \ref{P:quasi}.
The spectral sequence converges by Theorem~\ref{T:algconv}.
\end{proof}

\section{All Other Differentials Are Automatic}\label{S:autodiff}

A spectral sequence with $E^2$ page of the form of Theorem~\ref{T:e2page} with $E^\infty=0$ can only have one pattern of differential, which we give a rough picture of in Figure~\ref{F:differentials}. We need only consider differentials $\ssd^j: E^j_{p,q} \to E^j_{p-j,q+j-1}$ for $j\geq 2$.

\begin{prop}\label{P:nontrivialdiff}
The following differentials in the spectral sequence associated to $\e(D_{rst})$ are nontrivial:
\begin{align*}
\ssd^r: E^r_{-2s-r,2t+r-1} &\to E^r_{-2s-2r,2t+2r-2} \\
\ssd^{2r-1}: E^{2r-1}_{p,2t} &\to E^{2r-1}_{p-2r+1,2t+2r-2} & p&\in[-2s,-s-1] \\
\ssd^{2r-1-b}: E^{2r-1-b}_{-s,2t+b} &\to E^{2r-1-b}_{b+1-2r-s,2t+2r-2} & b&\in[0,r-2] \\
\ssd^r: E^r_{-s,2t+b} &\to E^{r}_{-s-r,2t+b+r-1} & b&\in [r-1,\infty)
\end{align*}
\end{prop}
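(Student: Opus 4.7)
Combining the previous section with Theorem~\ref{T:e2page} we have $E^\infty(\e(D_{rst}))=0$ together with a highly restricted $E^2$ page. Since every $E^j$ is a subquotient of $E^2$, a nonzero differential on page $j$ must connect two bidegrees in the support of $E^2$, and every nonzero class on $E^2$ must eventually die. The plan is to enumerate, for each nontrivial bidegree, the $\ssd^j$ permitted by the shift $(p,q)\mapsto(p-j,q+j-1)$ together with the support of $E^2$, and then use $E^\infty=0$ to force the listed differentials to be nonzero.

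Direct inspection shows that the right vertical strip, the bottom strip and the single point admit no incoming differentials, while the top horizontal strip and the upper right vertical strip admit no outgoing ones. Away from a small boundary region, each class on the first list has a \emph{unique} permitted outgoing differential: for bottom-strip classes $(p,2t)$ with $p\in[-2s+1,-s-1]$ this is $\ssd^{2r-1}$ to the top strip; for right-vertical classes $(-s,2t+b)$ with $b\in[0,r-2]$ it is $\ssd^{2r-1-b}$ to the top strip; and for $b\geq r-1$ it is $\ssd^r$ to the upper right vertical strip. Since none of these classes has any incoming differential, it must die via its unique outgoing, which is therefore nonzero; this yields the last two families and all but the $p=-2s$ case of the second.

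The main obstacle is the interaction among $(-2s,2t)$, the single point $(-2s-r,2t+r-1)$, and the leftmost top-strip position $(-2s-2r,2t+2r-2)$, which are connected through two $\ssd^r$ maps $\alpha\colon (-2s,2t)\to(-2s-r,2t+r-1)$ and $\beta\colon (-2s-r,2t+r-1)\to(-2s-2r,2t+2r-2)$. Both are $\K$-linear endomorphisms of $\K$, and $\beta\circ\alpha=0$ since $\ssd^r\circ\ssd^r=0$. A further enumeration shows that $\beta$ is the only differential on any page that can ever land on $(-2s-2r,2t+2r-2)$, so $E^\infty=0$ forces $\beta\neq 0$, giving the first claim, and then $\alpha=0$. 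Hence $(-2s,2t)$ survives to $E^{2r-1}$, where its only remaining permitted exit is $\ssd^{2r-1}$ onto $(-2s-2r+1,2t+2r-2)$, which is then nonzero; this closes the $p=-2s$ case of the second claim. The surviving target classes on the top strip and on the upper right vertical strip are accounted for by the incoming differentials already established, so no further verification is needed.
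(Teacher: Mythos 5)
Your argument is correct and reaches the conclusion, but it is organized dually to the paper's and carries a small internal inconsistency worth pointing out. The paper starts from the top strip $[-2s-2r,-s-r]\times\{2t+2r-2\}$, observes that \emph{no} differential can leave it, lists all potential differentials landing in it, and shows that the targets partition the whole strip; since $E^\infty=0$, each of those arrows must have rank one, with the remaining left- and right-column $\ssd^r$'s then forced. You instead track the \emph{source} classes (bottom strip and right vertical column), argue that away from the corner each has a unique permitted exit and no entry, and then handle the corner $(-2s,2t)\to(-2s-r,2t+r-1)\to(-2s-2r,2t+2r-2)$ with the $\alpha,\beta$ argument; that step ($\beta\neq0$ is the unique killer of the corner of the top strip, hence $\alpha=0$, hence $(-2s,2t)$ lives to $E^{2r-1}$) is exactly what the paper's ``partition of the top strip'' accomplishes implicitly. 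The content is the same; the paper's version is a little slicker because the top strip has \emph{no} outgoing differentials, whereas the sources have the $\alpha$ complication. Do note, though, that your preliminary assertion that ``the single point admits no incoming differentials'' is contradicted by your own $\alpha$ a few lines later: $\ssd^r\colon(-2s,2t)\to(-2s-r,2t+r-1)$ is a perfectly well-shaped potential differential, so the single point does admit a potential incoming. You never actually use the false claim (it isn't invoked in the ``unique exit'' paragraph, which treats only the bottom strip and the right vertical column), and your ``main obstacle'' paragraph accounts for $\alpha$ correctly, so the proof stands; but the statement should be corrected. Similarly, when $s=0$ the case $b=0$ of the right-vertical column coincides with $(-2s,2t)$, so the ``unique outgoing'' claim for $b\in[0,r-2]$ silently has an exception there that is again picked up by the corner analysis.
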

\begin{proof}
First we look at the `top row' $[-2s-2r,-s-r] \times \set{2t+2r-2}$. All differentials $\ssd^j$ out of $E_{p,q}^j$ for $(p,q)\in [-2s-2r,-s-r] \times \set{2t+2r-2}$ must be zero. We list all possibilities for differentials mapping to this row which have the potential to be nontrivial:
\begin{align*} \ssd^{2r-1} &: E_{p,2t}^{2r-1} \to E^{2r-1}_{p-2r+1,2t+2r-2} \\ 
\ssd^r&: E_{-2s-r,2t+r-1}^r \to E^r_{-2s-2r,2t+2r-2} \\
\ssd^j&: E^j_{-s,2t+2r-j-1} \to E^j_{-s-j,2t+2r-2} \\
\end{align*}
where \begin{align*} p\in [-2s,-s]  &\leftrightsquigarrow p-2r+1\in [-2s-2r+1,-2r-s+1] \\ j\in [r,2r-2] &\leftrightsquigarrow -s-j \in [2-s-2r,-s-r]. \end{align*}
But we have that \[ [-2s-2r,-s-r] = [-2s-2r+1,-2r-s+1] \sqcup \set{-2s-2r} \sqcup [2-2r-s,-s-r], \] so each map listed above must have rank $1$.

This leaves us only with the leftmost column \[ \set{-s-r} \times [2t+2r-1,\infty)\] and part of the rightmost column $\set{-s} \times [ 2t+r,\infty)$ still unaccounted for. Then it is obvious that there is only one possibility:
\begin{align*} \ssd^r: E^r_{-s,q} &\to E^r_{-s-r,q+r-1}  & q\in [2t+r,\infty). \end{align*}
\end{proof}

\begin{figure}[ht]
\centering
{
\begin{tikzpicture}
	\draw[->] (-.65,0) -- (-8,0);		
	\draw (0.2,0) -- (-.35,0);		
	\draw (-.75,-.1) -- (-.55,.1);	
	\draw (-.45,-.1) -- (-.25,.1);	
    \draw[->] (0,-0.2) -- (0,5);
    \draw (-1,2pt) -- (-1,-2pt) node[below=.01] {$-s$};
    \draw (-2.5,2pt) -- (-2.5,-2pt) node[below=.01] {$-s-r$};
    \draw (-4,2pt) -- (-4,-2pt) node[below=.01] {$-2s$};
    \draw (-5.5,2pt) -- (-5.5,-2pt) node[below=.01] {$-2s-r$};
    \draw (-7,2pt) -- (-7,-2pt) node[below left=.01 and -.6] {$-2s-2r$};
    \draw (2pt,1) -- (-2pt,1) node[right] {$2t$};
    \draw (2pt,2) -- (-2pt,2) node[right] {$2t+r-1$};
    \draw (2pt,3) -- (-2pt,3) node[right] {$2t+2r-2$};
    \fill (-5.5,2) circle (2pt);
    \draw [->,very thick] (-4,1) -- (-1,1) -- (-1,5);
    \draw [->,very thick] (-7,3) -- (-2.5,3) -- (-2.5,5);
    \draw [->, thick] (-5.6,2.0667) -- (-6.9,2.9333) node[pos=.5, below left=.01 and .01] {$\ssd^r$}; 
    \draw [->, thick] (-1.1,2.0667) -- (-2.4,2.9333) node[pos=.5, above=.5] {$\ssd^r$}; 
    \draw [->, thick] (-1.1,3.5667) -- (-2.4,4.4333) node[pos=.5, above=.2] {$\vdots$};    
    \draw [->, thick] (-4.1,1.06) -- (-6.5,2.9333) node[pos=.5, right=.3] {$\cdots \ssd^{2r-1} \cdots$}; 
    \draw [->, thick] (-1.1,1.06) -- (-3.5,2.9333); 
\end{tikzpicture} 
}
\caption{Differentials in the spectral sequence associated to $  \e(D_{rst})$}\label{F:differentials}
\end{figure}

\begin{cor}\label{C:vanishingofbidegrees}
Let $E_{*,*}^*$ be the spectral sequence associated with $\e(D_{rst})$. We record when various bidegrees become zero; they each contain a copy of $\K$ on the previous page. 
First for the lower right portion \begin{align*} 
E^{2r}_{p,2t} &=0 & p&\in[-2s,-s] \\ 
E^{2t+2r-v}_{-s,v}&=0 & v&\in [2t+1,2t+r-1] \\
E^{r+1}_{-s,q}&=0 & q&\in [2t+r,\infty)
\end{align*}
then for the upper left portion
\begin{align*}
E^{r+1}_{-2s-2r,2t+2r-2} &=0\\
E^{2r}_{p,2t+2r-2} &=0 & p&\in[-2s-2r+1,-2r-s+1] \\
E^{-s-p+1}_{p,2t+2r-2} &=0 & p&\in[-2r-s+2,-s-r] \\
E^{r+1}_{-s-r,q}&=0  & q&\in [2t+2r-1,\infty)
\end{align*}
and finally
\[ E^{r+1}_{-2s-r,2t+r-1} = 0. \] 
\end{cor}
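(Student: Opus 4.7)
The plan is to combine Proposition~\ref{P:nontrivialdiff} with Theorem~\ref{T:e2page}: each nontrivial differential listed in Proposition~\ref{P:nontrivialdiff} runs between two copies of $\K$, hence is forced to be an isomorphism, and thereby kills both its source and its target on the next page. Each item in the corollary then corresponds to either the source or the target of such a differential, with the page index shifted up by one.

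First I would observe that, by Theorem~\ref{T:e2page}, every bidegree of $E^2(\e(D_{rst}))$ is either $0$ or $\K$. Since subsequent pages are subquotients, the same is true of every $E^j$ for $j \geq 2$. Consequently, for any nontrivial differential $\ssd^j: E^j_{p,q} \to E^j_{p-j,q+j-1}$ from Proposition~\ref{P:nontrivialdiff}, both endpoints must still equal $\K$ on page $j$, and the nonzero map $\K \to \K$ is an isomorphism. It follows that
\[
E^{j+1}_{p,q} \;=\; \ker(\ssd^j)\big/(\text{image from page } j) \;=\; 0,
\]
and similarly $E^{j+1}_{p-j,q+j-1} = 0$, because the image of our isomorphism already fills the target.

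The remaining work is bookkeeping: match each source/target bidegree of each item in Proposition~\ref{P:nontrivialdiff} to a clause of the corollary. The differential in the first clause of Proposition~\ref{P:nontrivialdiff} immediately gives the two clauses $E^{r+1}_{-2s-r,2t+r-1} = 0$ and $E^{r+1}_{-2s-2r,2t+2r-2} = 0$. The second clause (together with the $b=0$ case of the third clause) kills the sources $E^{2r}_{p,2t}$ for $p \in [-2s,-s]$ and the targets $E^{2r}_{p,2t+2r-2}$ for $p \in [-2s-2r+1,-2r-s+1]$. The $b \in [1,r-2]$ cases of the third clause, together with the $b = r-1$ case of the fourth, produce the middle vanishing $E^{-s-p+1}_{p,2t+2r-2} = 0$ for $p \in [-2r-s+2,-s-r]$ via the reparametrization $p = b+1-2r-s$ and $j+1 = 2r-b = -s-p+1$, and on the source side give $E^{2t+2r-v}_{-s,v} = 0$ for $v \in [2t+1,2t+r-1]$ via $v = 2t+b$. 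The fourth clause with $b \geq r$ accounts for the tails $E^{r+1}_{-s,q} = 0$ for $q \geq 2t+r$ and $E^{r+1}_{-s-r,q} = 0$ for $q \geq 2t+2r-1$.

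There is no serious obstacle here; the only mild hazard is an off-by-one slip in the boundary cases where two ranges meet (especially at $p = -2r-s+1$ versus $p = -2r-s+2$ on the top row, and at $q = 2t+r-1$ versus $q = 2t+r$ on the right column). I would handle this by making an explicit table pairing each differential in Proposition~\ref{P:nontrivialdiff} with the two vanishing clauses it produces in the corollary and verifying that every clause is hit.
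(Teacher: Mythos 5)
Your proposal is correct and is exactly the argument the paper intends (the paper gives no separate proof for the corollary, treating it as a direct consequence of Proposition~\ref{P:nontrivialdiff} combined with the fact from Theorem~\ref{T:e2page} that every bidegree on $E^2$ is $0$ or $\K$). The bookkeeping you outline matches up: each clause of the corollary corresponds to the source or target of a nontrivial differential from Proposition~\ref{P:nontrivialdiff}, with the boundary cases ($b=0$ in the third family for $p=-s$ and $p=-2r-s+1$; $b=r-1$ in the fourth family for $v=2t+r-1$ and $p=-s-r$) correctly folded into the adjacent ranges.
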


\begin{rem} In the spectral sequence associated to $\e(D_{rst})$ we have
$E^{2r}=0$.
\end{rem}

\section{Products and Operations on Cycles}\label{S:products}

This section is a bit of a warm-up for what will come. The first goal is to define the (external) product in the spectral sequence of a cosimplicial chain complex $Y$ and show that it is commutative. We define external operations for \emph{$r$-cycles} and show that the bottom operation agrees with the external square.

In general, if $Y$ is a cosimplicial chain complex equipped with a multiplication
$ Y\ten Y \to Y$,
then there is a product
$ E^r(Y) \ten E^r(Y) \to E^r(Y) $
which is a derivation for $\ssd^r$, coming from 
\[ C(Y) \ten C(Y) \overset{AW}{\to} C(Y\ten Y) \to C(Y) \] 
where $AW$ is the Alexander-Whitney map from \eqref{E:AW}.
In our setting, we start with a cosimplicial map
\[ \theta: \e(Y) \to Y \]
and obtain a product by precomposition with the composite
\[ \inc: Y\ten Y = \K \ten Y \ten Y \overset{1\mapsto e_0}{\longrightarrow} W\ten Y \ten Y \to W\tp (Y\ten Y) = \e(Y). \]
The following proposition shows that products $E^r(Y)\ten E^r(Y) \to E^r(Y)$ obtained from
\[ Y\ten Y \overset{\inc}{\to} \e(Y) \overset{\theta}{\to} Y\]
are commutative for $r\geq 2$. 

\begin{prop}\label{P:commutative}
Let $Y$ be a cosimplicial chain complex and $r\geq 2$. The external product
\[ \mu_r: E^r(Y) \ten E^r(Y) \overset{AW}{\to} E^r(Y \ten Y) \to E^r(\e(Y)) \] is commutative.
\end{prop}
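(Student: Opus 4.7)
The plan is to combine Remark~\ref{R:commutative} --- which says that $AW$ becomes $\pi$-equivariant on $E^2$ --- with a chain-level homotopy showing that, on $E^r$ for $r \geq 2$, the two cosimplicial maps $\inc$ and $\inc \circ \sigma: C(Y \ten Y) \to C(\e(Y))$ induce the same map. Write $\tau$ for the swap on $C(Y) \ten C(Y)$; under the K\"unneth isomorphism of Lemma~\ref{L:tensbicplx}, $E^r(\tau)$ corresponds to the swap of tensor factors on $E^r(Y) \ten E^r(Y)$, so commutativity of $\mu_r$ is equivalent to the identity $E^r(\inc \circ AW) = E^r(\inc \circ AW) \circ E^r(\tau)$. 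The relation $AW \circ \tau \sim_2 \sigma \circ AW$ from Remark~\ref{R:commutative} implies (since the $E^2$-equality of chain maps persists to $E^r$ for all $r \geq 2$) that $E^r(AW) \circ E^r(\tau) = E^r(\sigma) \circ E^r(AW)$, so everything will follow once we show $E^r(\inc) = E^r(\inc) \circ E^r(\sigma)$ on $E^r(Y \ten Y)$.

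For the second piece, define $h : C(Y \ten Y) \to C(\e(Y))$ by $h(v) = e_1 \ten v$. Using $\hd e_1 = (1+\sigma) e_0$ and the fact that the cosimplicial operators act only on the $Y \ten Y$ factor of $\e(Y) = W \tp (Y \ten Y)$, the total differential is
\[
\partial(e_1 \ten v) \;=\; e_0 \ten v + e_0 \ten \sigma v + e_1 \ten \partial v \;=\; \inc(v) + \inc(\sigma v) + e_1 \ten \partial v,
\]
which we rearrange to
\[
\inc(v) + \inc(\sigma v) \;=\; \partial(e_1 \ten v) + e_1 \ten \partial v.
\]
Now suppose $v \in F^{-s} C(Y \ten Y)$ is an $r$-cycle, i.e.\ $\partial v \in F^{-s-r}$. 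Then $e_1 \ten v \in F^{-s} \subseteq F^{-s+r-1}$, and $\partial(e_1 \ten v)$ is the sum of two terms in $F^{-s}$ (the $\inc$-terms) and one term in $F^{-s-r} \subseteq F^{-s}$, so $e_1 \ten v \in Z^{r-1}_{-s+r-1,\,*}$. Similarly $e_1 \ten \partial v \in F^{-s-r} \subseteq F^{-s-1}$, and its differential $\inc(\partial v) + \inc(\sigma \partial v)$ (with $e_1 \ten \partial^2 v = 0$) lies in $F^{-s-r}$, placing $e_1 \ten \partial v \in Z^{r-1}_{-s-1,\,*}$. Consequently $\inc(v) + \inc(\sigma v) \in \partial Z^{r-1}_{-s+r-1,\,*} + Z^{r-1}_{-s-1,\,*} = B^r_{-s,\,*}$, which is exactly $\inc(v) \sim_r \inc(\sigma v)$.

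Assembling the two steps, for any choice of $r$-cycle representatives $a, b \in C(Y)$ we get
\[
\mu_r([b] \ten [a]) = [\inc AW \tau(a \ten b)] = [\inc \sigma AW(a \ten b)] = [\inc AW(a \ten b)] = \mu_r([a] \ten [b])
\]
in $E^r(\e(Y))$. The one piece requiring care is the filtration bookkeeping in the second paragraph --- verifying that the ``correction'' chains $e_1 \ten v$ and $e_1 \ten \partial v$ really are $(r-1)$-cycles in the right bidegrees --- but once the formula for $\partial(e_1 \ten v)$ is on the table, this is a direct consequence of the hypothesis $\partial v \in F^{-s-r}$. Everything else is formal manipulation with the definitions.
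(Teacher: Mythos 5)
Your proposal is correct and follows essentially the same route as the paper: reduce via Remark~\ref{R:commutative} to showing $\inc \sim \inc\sigma$, then produce the homotopy $h(v)=e_1\ten v$ and read off the filtration degrees from $\partial(e_1\ten v)=(1+\sigma)e_0\ten v+e_1\ten\partial v$. The only stylistic difference is that you carry out the bookkeeping for general $r$, whereas the paper does the $r=1$ case and invokes the fact that equality of induced maps on $E^1$ propagates to all higher pages.
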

\begin{proof}
Since $AW$ becomes $\pi$-equivariant starting at $E^2$ (see the remark on page~\pageref{R:commutative}), we can reduce the problem to showing that the following holds on $E^2$:
\[ \mu_2 \sigma = \inc AW \sigma = \inc \sigma AW \overset{?}{=} \inc AW = \mu_2.\] The equality $\mu_r \sigma = \mu_r$ then follows for all $r\geq 2$.

Thus we merely need to show that $\inc \sigma = \inc$ on $E^2$. This is actually true on $E^1$. Consider $v\in Z_{-s}^1(Y\ten Y)\ci \Tot C(Y\ten Y)$, then we have the formula
\[ \partial (e_1 \ten v) = (1+\sigma) e_0 \ten v + e_1 \ten \partial v \] in $\Tot [W^v\tp C(Y\ten Y)] = \Tot C(\e(Y))$. Notice that
\[ \partial (e_1 \ten v) \in \partial F^{-s} = \partial Z^0_{-s+1-1} \ci B_{-s}^1 \]
and
\[ e_1 \ten \partial v \in F^{-s-1} = Z^0_{-s-1} \ci B_{-s}^1. \] Furthermore, 
\[ (1+\sigma) e_0 \ten v = (\inc + \inc \sigma) v,\] so $\inc = \inc \sigma$ on $E^1$.
\end{proof}

We now define external operations on $r$-cycles using the universal property of $D_{rst}$. The idea is that the lower `\reflectbox{L}' in the spectral sequence for $\e(D_{rst})$ should map to the external operations.
We saw in \S\ref{S:autodiff} that the $2^{\text{nd}}$ page is the same as the $r^{\text{th}}$ page in this spectral sequence. 

Theorem~\ref{T:e2page} says that $E^2_{p,q}=E^r_{p,q}$ is either $\K$ or $0$; in the former case we write \begin{equation} 0\neq \exte_{p,q} \label{SYM:exte} \in E^2_{p,q}=E^r_{p,q} = \K\end{equation} for the generator.

\begin{defn}\label{D:preext}We define functions
\[ \preext^m: Z^r(Y) \to E^r(\e(Y)) \] as follows. For $y\in Z_{-s,t}^r (Y)$, we let 
\begin{equation}
\begin{aligned} \preext_v^{m}(y) &= E^r(\e(\dmap_y)) (\exte_{-s,m+t}) & m&\geq t\\
\preext_h^{m}(y) &= E^r(\e(\dmap_y)) (\exte_{m-s-t,2t}) &  m&\in[t-s,t]
 \end{aligned}\label{E:opdef}
\end{equation}
which are all classes of $E^r(\e(Y))$. Here $\dmap_y$ is the map from Proposition~\ref{P:universalprop} and $\exte_{p,q}$ is the universal class from \eqref{SYM:exte}.
\end{defn}

The idea is that the lower `\reflectbox{L}' in the spectral sequence for $\e(D_{rst})$ should map to the external operations of $y\in Z^r_{-s,t}$.

\begin{rem}
Recall that on $E^r$, $\dmap_y$ only depends on the class of $y$ in $E^r$, rather than on $y$ itself. The situation is much more subtle for $\e(\dmap_y)$, and, at $E^r$, this map \emph{does} depend on the specific choice of $r$-cycle.
\end{rem}

Notice that an $r$-cycle is, in particular, an $(r-1)$-cycle. Let us now compare the answers we get by considering an $r$-cycle in these two ways.
\begin{prop}\label{P:rtorminus1} Let $r>2$, and suppose $y \in Z^r_{-s,t}(Y)$. Write $y_r$ for $y$ considered as an element of $Z^r$ and $y_{r-1}\in Z^{r-1}$ for $y$ considered as an $r-1$ cycle. Then
\[ \preext^m(y_r) = [\preext^m(y_{r-1})]_r.\]
\end{prop}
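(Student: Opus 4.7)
The strategy is to relate $\preext^m(y_r)$ and $\preext^m(y_{r-1})$ via a natural comparison map $\phi\colon D_{r-1,s,t}\to D_{r,s,t}$ between universal examples. Concretely, $\phi$ is induced by the inclusion of skeleta $\sk_{s+r-2}\Delta^p\hookrightarrow\sk_{s+r-1}\Delta^p$, both taken modulo $\sk_{s-1}\Delta^p$. The key compatibility, immediate from the explicit formula for representing maps in Proposition~\ref{P:universalprop}, is that $\dmap_{y_r}\circ\phi=\dmap_{y_{r-1}}$, since both send $\Sigma^{t-s}\id_{[s+k]}\mapsto y^{s+k}_{t+k}$ for $k=0,\ldots,r-2$.

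The main step is to show that $E^{r-1}(\e(\phi))$ carries a generator of $E^{r-1}_{p,q}(\e(D_{r-1,s,t}))$ to a generator of $E^{r-1}_{p,q}(\e(D_{r,s,t}))=E^r_{p,q}(\e(D_{r,s,t}))$ for every bidegree $(p,q)$ of the lower-right ``L'' where the operations land. These are the bidegrees $(-s,2t+m)$ for $m\geq 0$ and $(m-s-t,2t)$ for $m\in[t-s,t]$, and the equality $E^{r-1}=E^r$ for $\e(D_{r,s,t})$ holds by Theorem~\ref{T:e2page}. I would verify this via Theorem~\ref{T:e1basis}: at these bidegrees, the $E^1$-basis elements are all of the form $e_m\ten\id_{[s]}\ten\id_{[s]}$ or $e_0\ten\ep\ten\ep'$ with $\ep,\ep'\in\h_s^p$, all coming from the $H_s$ summand of the levelwise homology. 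Naturality of the connecting homomorphism used in Proposition~\ref{P:homologyofD} shows that $H_s(\phi^p)$ is the identity under the identifications with $\K\h_s^p$ (and $H_{s+r-2}(\phi^p)=0$, but this component does not contribute to the bidegrees of interest). Thus $E^1(\e(\phi))$ is the identity on these basis elements, and because $\ssd^1$ commutes with $E^1(\e(\phi))$, passing to $E^{r-1}$ identifies generator with generator at the listed bidegrees.

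Denoting the common generator by $\exte_{p,q}$ in each spectral sequence, by functoriality of $E^{r-1}(\e(-))$ we obtain
\begin{align*}
[\preext^m(y_{r-1})]_r &= \bigl[E^{r-1}(\e(\dmap_{y_{r-1}}))(\exte_{p,q})\bigr]_r \\
&= \bigl[E^{r-1}(\e(\dmap_{y_r}))\bigl(E^{r-1}(\e(\phi))(\exte_{p,q})\bigr)\bigr]_r \\
&= \bigl[E^{r-1}(\e(\dmap_{y_r}))(\exte_{p,q})\bigr]_r \\
&= E^r(\e(\dmap_{y_r}))(\exte_{p,q}) = \preext^m(y_r).
\end{align*}
The passage from $E^{r-1}$ to $E^r$ in the last line is valid because $\exte_{p,q}\in E^{r-1}(\e(D_{r,s,t}))$ is $\ssd^{r-1}$-closed (trivially, since $E^{r-1}=E^r$), so by naturality its image in $E^{r-1}(\e(Y))$ is a permanent cycle on that page.

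The main obstacle is the comparison $E^{r-1}(\e(\phi))(\exte_{p,q})=\exte_{p,q}$ between the two spectral sequences; they disagree at most bidegrees (for instance at those involving the $H_{s+r-1}$ or $H_{s+r-2}$ summands), but at the lower-right ``L'' both spectral sequences have matching generators because these involve only $H_s$, which is insensitive to changing $r$.
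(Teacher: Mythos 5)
Your proposal is correct and follows the same overall strategy as the paper's proof: exploit the commutative triangle $\dmap_{y_r}\circ\phi=\dmap_{y_{r-1}}$ coming from the representing-map construction, verify that the comparison map $\e(D_{r-1,s,t})\to\e(D_{r,s,t})$ carries the lower-right ``L'' generator to the corresponding generator, and then conclude by functoriality of $E^\bullet(\e(-))$ together with the survival of the generator to page $r$. The one place where you diverge from the paper is in how you establish the generator-to-generator claim. You argue directly at $E^1$, observing via Theorem~\ref{T:e1basis} that the relevant basis elements live in the $\K\h_s^p$ summand of the levelwise homology and that $H_s(\phi^p)$ is the identity; the paper instead proves Lemma~\ref{L:bottomL} for the inclusion $D_{r,s,t}\to D_{\infty s t}$ and deduces the claim for $D_{r-1,s,t}\to D_{r,s,t}$ by a two-out-of-three argument in the commuting triangle with apex $D_{\infty s t}$. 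Both routes reduce to the same $E^1$-level computation, so this is a presentational rather than substantive difference. One small point you should tighten: the phrase ``because $\ssd^1$ commutes with $E^1(\e(\phi))$, passing to $E^{r-1}$ identifies generator with generator'' should also invoke the vanishing of the intervening differentials $\ssd^2,\ldots,\ssd^{r-2}$ on the ``L'' for \emph{both} $\e(D_{r-1,s,t})$ and $\e(D_{r,s,t})$ (Theorem~\ref{T:e2page} applied to each), so that the isomorphism established at $E^2$ persists unchanged through page $r-1$; as written you only cite $\ssd^1$-compatibility, which gets you to $E^2$ but not beyond.
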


Before beginning the proof, notice that we can easily compare the two constructions because
\begin{equation}\label{E:inclusion} \xymatrix{
D_{r-1,st} \ar@{->}[dr]_{\dmap_y} \ar@{->}[rr]^-{\dmap_\imath} && D_{rst} \ar@{->}[dl]^{\dmap_y} \\
& Y
}\end{equation}
commutes.

We will need the following lemma.  It says that if we consider the inclusion $\dmap_\imath: D_{rst} \to D_{\infty st}$, where $\imath$ is the class defined in \eqref{E:iota}, then $E^2(\e(\dmap_\imath))$ is an injection when restricted to the bottom `\reflectbox{L}'.
\begin{lem}\label{L:bottomL}
Consider the inclusion $\dmap_{\imath}: D_{rst} \to D_{\infty st}$. The map $E^2(\e(\dmap_\imath))$ 
is an isomorphism in the bidegrees 
$[-2s, -s]\times \set{2t}$ and $\set{-s} \times [2t,\infty)$.
\end{lem}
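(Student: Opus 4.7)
The plan is to descend from an identification on $E^1$. By Proposition~\ref{P:homologyofD}, the levelwise inclusion $\dmap_\imath: D_{rss}^p \to D_{\infty ss}^p$ induces the identity on $H_s \cong \K\h_s^p$ and, forced by $H_{s+r-1}(D_{\infty ss}^p) = 0$, the zero map on $H_{s+r-1} \cong \K\h_{s+r}^p$. Combining the natural isomorphism $H_*(\e(H_*(-))) \cong H_*(\e(-))$ of Section~\ref{S:maincalchomology} with the basis of Lemma~\ref{L:mayonethree}, the induced map on the levelwise homology of $\e$ acts as the identity on every basis element built from $\h_s$-classes alone (namely $e_m \ten \id_{[s]} \ten \id_{[s]}$ and $e_0 \ten \ep \ten \ep'$ with $\ep, \ep' \in \h_s$) and annihilates every basis element involving an $\h_{s+r}$-factor.

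After conormalization, Theorem~\ref{T:e1basis} identifies the preserved classes as exactly the ``bottom-L'' basis of $E^1(\e(D_{rst}))$, while the $r = \infty$ case of the same theorem shows that they exhaust all of $E^1(\e(D_{\infty st}))$. Consequently $E^1(\e(\dmap_\imath))$ restricts to an $\ssd^1$-equivariant bijection from the bottom-L of $E^1(\e(D_{rst}))$ onto the entire $E^1$ of the target.

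The final step is to check that the bottom-L is closed under $\ssd^1$ inside $E^1(\e(D_{rst}))$, so that $E^2$ at each bottom-L bidegree depends only on bottom-L data. Since $\ssd^1$ shifts the column index by one while fixing the row, every other nontrivial part of $E^1(\e(D_{rst}))$---the middle strip at row $q = 2t+r-1$, the top strip at $q = 2t+2r-2$, and the vertical column at $-s-r$---is, for $r \geq 2$, too far from the bottom-L to be hit in a single horizontal step; hence $\ssd^1$ from or into a bottom-L class lies in the bottom-L or is zero. The $E^2$-computation at a bottom-L bidegree is therefore the cohomology of a subchain complex on which $E^1(\e(\dmap_\imath))$ is already an isomorphism, which proves the lemma. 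The main obstacle is this final bidegree bookkeeping; everything else is naturality applied to the $H_s$-versus-$H_{s+r-1}$ dichotomy for $\dmap_\imath$.
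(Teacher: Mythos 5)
Your proof is correct and follows the same strategy as the paper's: the paper simply observes that $\e(\dmap_\imath)$ is an isomorphism on $E^1$ in the given bidegrees, leaving the passage to $E^2$ implicit. You've filled in exactly what the paper leaves unstated — the explicit $E^1$-level identification via Theorem~\ref{T:e1basis} and the check that $\ssd^1$ into and out of the bottom-L bidegrees involves only bidegrees where the map is already an isomorphism (or zero on both sides).
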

\begin{proof}
On $E^1$, the map $\e(\dmap_\imath)$ is an isomorphism in this range. 
\end{proof}

\begin{proof}[Proof of Proposition~\ref{P:rtorminus1}]
A special case of diagram (\ref{E:inclusion}) is when $Y=D_{\infty st}$ and $y=\imath$. Combined with Lemma~\ref{L:bottomL}, this tells us that  in the spectral sequence,
$\e(D_{r-1,st}) \to \e(D_{rst})$ takes the lower `\reflectbox{L}' to the lower `\reflectbox{L}'.  Furthermore, the following commutes,
\[ \xymatrix{
E^2(\e (D_{(r-1)st})) \ar@{->}[dr] \ar@{->}[rr]^-{E^2\e\dmap_\imath} && E^2(\e(D_{rst})) \ar@{->}[dl] \\
& E^2(\e(Y))
}\]  which implies that the representatives on the second page of $\preext^m(y_r)$ and $\preext^m(y_{r-1})$ are the same. The result follows.
\end{proof}

\subsection{Bottom Operation is the Square} We now show that the bottom operation coincides with the squaring operation. In particular, since the external product is commutative, this shows that the bottom operation is additive. 

\begin{lem} Let $r\geq 2$. In the case of the universal example $D_{rst}$,
\[ \mu_r(\imath \ten \imath) \neq 0 \] where
\[ \mu_r: E^r (D_{rst}) \ten E^r (D_{rst}) \to E^r(\e(D_{rst}))\]
is the external multiplication.
\end{lem}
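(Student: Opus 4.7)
The plan is to reduce nonvanishing of $\mu_r(\imath \ten \imath)$ to an isomorphism claim about $E^2(\inc)$, which can be verified through the identifications already established in Sections~\ref{S:maincalchomology} and \ref{S:isolationofrows}. Since $AW$ induces an isomorphism $E^r(D_{rst}) \ten E^r(D_{rst}) \xrightarrow{\cong} E^r(D_{rst} \ten D_{rst})$ for $r \geq 2$ by Proposition~\ref{P:easye2}, and $\imath \ten \imath$ generates the copy of $\K$ at bidegree $(-2s, 2t)$ on the left, it suffices to show that $E^r(\inc)$ is nonzero in this bidegree, where $\inc: D_{rst} \ten D_{rst} \to \e(D_{rst})$ is the inclusion $y \mapsto e_0 \ten y$. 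Theorem~\ref{T:e2page} and Proposition~\ref{P:easye2} both identify the relevant groups as $\K$, and the structure of differentials in Section~\ref{S:autodiff} (no differential hits or leaves bidegree $(-2s,2t)$ on pages $2$ through $2r-2$) shows that these copies of $\K$ survive unchanged from $E^2$ to $E^r$. Hence the task reduces to showing $E^2(\inc)$ is an isomorphism at $(-2s, 2t)$.

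The key identification is that on the row $q = 2t$, the induced map $E^1(\inc)$ is the natural quotient $\voc_{s,s} \to \vom_s$. By the K\"unneth theorem, $H_{2t}(D^p \ten D^p)$ receives contributions only from the factor $H_s(D^p) \ten H_s(D^p)$ (since $H_*(D^p)$ is concentrated in degrees $s$ and $s+r-1$), so after conormalization $E^1_{-p,2t}(D \ten D) \cong \voc_{s,s}^p$ using the natural isomorphism $H_s(D_{rss}) \cong H_s(D_{\infty ss})$ from the proof of Proposition~\ref{P:comparison}. The target is $E^1_{-p, 2t}(\e(D)) \cong \vom_s^p$ via $\psi_{bot}$ of Proposition~\ref{P:comparison}. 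Combining Lemma~\ref{L:mayonethree} with the natural transformation $H_*(\e(H_*(-))) \to H_*(\e(-))$ of Section~\ref{S:maincalchomology}, the chain-level inclusion $y \mapsto e_0 \ten y$ induces exactly the quotient map by the $\pi$-action on the level of $H_s \ten H_s$.

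To conclude, the proof of Theorem~\ref{T:symmetric} already demonstrated that the quotient $\voc_{s,s} \to \vom_s$ induces an isomorphism $H^{2s}(\voc_{s,s}) \xrightarrow{\cong} H^{2s}(\vom_s) = \K$ (via the fact that $H^{2s}(A) \to H^{2s}(\voc_{s,s})$ vanishes, making the quotient map surjective on cohomology, and dimension counting forces it to be injective). Transporting across the identifications above gives that $E^2(\inc)_{-2s,2t}$ is an isomorphism; combined with the $AW$ isomorphism, this yields $\mu_r(\imath \ten \imath) \neq 0$.

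The main obstacle is the middle step: verifying that, after the K\"unneth isomorphism, Lemma~\ref{L:mayonethree}, and the natural transformation of Section~\ref{S:maincalchomology}, the chain-level map $\inc$ corresponds precisely to the quotient $\voc_{s,s} \to \vom_s$. This is a naturality/bookkeeping argument rather than a computation, but requires careful tracking through the zig-zag of quasi-isomorphisms defining the identification $H_*(\e(D^p)) \cong H_*(\e(H_*(D^p)))$.
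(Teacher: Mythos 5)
Your proof is correct and takes essentially the same route as the paper's: both factor $\mu_r$ through $E^r(D_{rst}\ten D_{rst})$ via Proposition~\ref{P:easye2}, identify the induced map on $E^2_{-2s,2t}$ with $H^{2s}\voc_{s,s}\to H^{2s}\vom_s$ via Proposition~\ref{P:comparison}, and conclude from Lemma~\ref{L:upsilonA}. The only cosmetic difference is that the paper interposes the inclusion $D_{rst}\to D_{\infty st}$ as an explicit intermediate commutative square, whereas you absorb that step into the identification $H_s(D_{rss})\cong H_s(D_{\infty ss})$; the naturality bookkeeping you flag as the ``main obstacle'' is exactly what those squares encode, and it does go through since $\inc$ is a natural transformation of cosimplicial chain complexes.
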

\begin{proof} First notice that we can factor the external multiplication as
\[ E^j (D_{rst}) \ten E^j (D_{rst})\overset{\cong}{\to} E^j(D_{rst} \ten D_{rst}) \to E^j(\e(D_{rst})), \] with the first arrow an isomorphism when $j\geq 2$ by Lemma~\ref{L:tensbicplx}.
We will prove that \[ E^2(D_{rst} \ten D_{rst}) \to E^2(\e( D_{rst})) \] is nontrivial; the result then follows by examining the spectral sequence for $\e(D_{rst})$ since nothing can hit the element in bidegree $(-2s,2t)$.

The vertical maps in the following commutative diagram are nontrivial, where $D_{rst} \to D_{\infty st}$ is the inclusion.
\[ \xymatrix{
E^2_{-2s,2t} (D_{rst} \ten D_{rst}) \ar@{->}[d] \ar@{->}[r] & E^2_{-2s,2t}(\e(D_{rst})) \ar@{->}[d] \\
E^2_{-2s,2t} (D_{\infty st} \ten D_{\infty st}) \ar@{->}[r] & E^2_{-2s,2t}(\e(D_{\infty st})) \\
}\] 
We must see that the bottom map is nontrivial. 
To do so, note that the diagram 
\[ \xymatrix{
E^2_{-2s,2t} (D_{\infty st} \ten D_{\infty st}) \ar@{->}[r] \ar@{=}[d] & E^2_{-2s,2t} (\e(D_{\infty st}))  \\
H^{2s} \voc_{s,s} \ar@{->}[r] & H^{2s} \vom_{s} \ar@{->}[u]^\cong_{\text{Prop.~\ref{P:comparison}}}
}\]
also commutes (where  $\voc_{s,s}$ and $\vom_s$ are defined in (\ref{E:voc}, \ref{E:vom})),
and the first map in the exact sequence 
\[ H^{2s} A \to H^{2s} \voc_{s,s} \to H^{2s} \vom_s \to 0 \] is zero by Lemma~\ref{L:upsilonA}, so $H^{2s} \voc_{s,s} \to H^{2s} \vom_s$ is an isomorphism.
\end{proof}

\begin{prop}\label{P:squarebottom}
Let $y\in Z^r_{-s,t}(Y)$, where $r\geq 2$. Then
\[ \mu_r([y],[y]) = \preext^{t-s}(y). \] 
\end{prop}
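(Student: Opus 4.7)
The plan is to reduce to the universal case $Y = D_{rst}$, $y = \imath$, where both sides live in the one-dimensional group $E^r_{-2s,2t}(\e(D_{rst})) = \K\cdot \exte_{-2s,2t}$ from Theorem~\ref{T:e2page}.

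First I would establish naturality of both sides with respect to the representing map $\dmap_y: D_{rst} \to Y$. For the right side, this is built into Definition~\ref{D:preext}: taking $m=t-s$ gives
\[ \preext^{t-s}(y) = E^r(\e(\dmap_y))(\exte_{-2s,2t}). \]
For the left side, the external product is induced from the composite $Y\ten Y \overset{AW}{\to} C(Y\ten Y) \to C(\e(Y))$, both arrows of which are natural in $Y$. Since $E^r(\dmap_y)(\imath) = [y]$ by Proposition~\ref{P:universalprop}, naturality gives
\[ \mu_r([y],[y]) = E^r(\e(\dmap_y))\bigl( \mu_r(\imath, \imath)\bigr). \]
Therefore it suffices to prove the universal statement $\mu_r(\imath, \imath) = \exte_{-2s,2t}$ in $E^r(\e(D_{rst}))$.

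This universal statement is almost immediate: the bidegree $(-2s,2t)$ part of $E^r(\e(D_{rst}))$ is a single copy of $\K$, generated by definition by $\exte_{-2s,2t}$, and the preceding lemma asserts that $\mu_r(\imath, \imath)$ is nonzero in precisely this bidegree. Equality of the two nonzero elements of a one-dimensional $\K$-vector space then forces the desired identification.

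I do not anticipate a serious obstacle here; the work is front-loaded into Theorem~\ref{T:e2page} and the preceding lemma (which used Lemma~\ref{L:upsilonA} to see that $H^{2s}\voc_{s,s} \to H^{2s}\vom_s$ is an isomorphism). The only point requiring mild care is the assertion that $\mu_r$ is natural in $Y$ at the level of $E^r$; this is routine since $\mu_r$ is induced from a cosimplicial chain map natural in $Y$, and passage to $E^r$ is functorial. Once naturality is in hand, the proof is a two-line reduction to the one-dimensional calculation above.
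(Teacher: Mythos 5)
Your proof is correct and takes essentially the same approach as the paper: both reduce to the universal case via naturality of $\mu_r$ and the representing map $\dmap_y$, then use the preceding lemma together with one-dimensionality of $E^r_{-2s,2t}(\e(D_{rst}))$ to conclude $\mu_r(\imath\ten\imath)=\exte_{-2s,2t}$.
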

\begin{proof}
Let $\dmap_y: D_{rst} \to Y$ be the representing map from Proposition~\ref{P:universalprop}. Then \[ \preext^{t-s}(y) = E^r(\e(\dmap_y)) (\exte_{-2s,2t}) = E^r(\e(\dmap_y)) (\mu_r(\imath \ten \imath)) \] by the preceding lemma,
where $\exte_{p,q}$ is the universal class from \eqref{SYM:exte}.
Since $\mu_r: E^r(-)\ten E^r(-) \Rightarrow E^r(\e(-))$ is a natural transformation, we have a commutative diagram
\[ \xymatrix{
E^r(D_{rst})\ten E^r(D_{rst}) \ar@{->}[r]^-{\mu_r} \ar@{->}[d]_{E^r(\dmap_y) \ten E^r(\dmap_y)}& E^r(\e(D_{rst})) \ar@{->}[d]^{E^r(\e(\dmap_y)) }\\
E^r(Y)\ten E^r(Y) \ar@{->}[r]^-{\mu_r}& E^r(\e(Y))
}\]
so
\[ \preext^{t-s}(y) = \mu_r (E^r(\dmap_y) \ten E^r(\dmap_y) (\imath \ten \imath)) = \mu_r([y], [y]). \] 
\end{proof}

There are two consequences to this proposition. The first is that \[
\preext^{t-s}: Z_{-s,t}^r(Y) \to E^r(\e(Y)) \] is additive. This follows from commutativity of $\mu_r$. Second, $\preext^{t-s}$ induces a homomorphism
\[ \ext^{t-s}: E_{-s,t}^r(Y) \to E^r(\e(Y)) \]
since $\mu_r$ only depends on the $E^r$-class of a given $r$-cycle.

\section{Additivity and Sums of Bousfield-Kan Examples}\label{S:sumsofex}
The goal of this section is to prove the following proposition for $m>t-s$.
\begin{prop}[Additivity]\label{P:additivity} Let $r\geq 2$.
The functions
\begin{align*}
\preext_v^m:& Z_{-s,t}^r(Y) \to E^r_{-s,m+t}(\e(Y)) &  m&\geq t \\
\preext_h^m:& Z_{-s,t}^r(Y) \to E^r_{m-s-t,2t}(\e(Y)) & m&\in[t-s,t]
\end{align*} are homomorphisms.
\end{prop}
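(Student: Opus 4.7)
The plan is to exploit the ``sum'' Bousfield--Kan example $D_{rst} \oplus D_{rst}$ with inclusions $\iota_1, \iota_2$ and diagonal $\Delta = \iota_1 + \iota_2$. Inspecting the formulas in Proposition~\ref{P:universalprop}, the assignment $y \mapsto \dmap_y$ is manifestly $\K$-linear, so $\dmap_{y+y'} = \dmap_y + \dmap_{y'}$ as maps $D_{rst} \to Y$. Since $\e$ is quadratic rather than linear, one obtains
\[
\e(\dmap_{y+y'}) = \e(\dmap_y) + \e(\dmap_{y'}) + \kappa_{y,y'}, \quad \text{where } \kappa_{y,y'} := W \tp (\dmap_y \ten \dmap_{y'} + \dmap_{y'} \ten \dmap_y).
\]
Thus additivity of $\preext_v^m$ and $\preext_h^m$ for $m > t-s$ reduces to showing $E^r(\kappa_{y,y'})(\exte_{p,q}) = 0$ for every generator $\exte_{p,q}$ other than the bottom corner $\exte_{-2s,2t}$.

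Next I would factor $\kappa_{y,y'}$ through an auxiliary complex. As a $\K\pi$-module,
\[
(D_{rst} \oplus D_{rst})^{\ten 2} \cong D_{rst}^{\ten 2} \oplus D_{rst}^{\ten 2} \oplus \bigl(\K\pi \otimes_\K D_{rst}^{\ten 2}\bigr),
\]
where the first two summands (from $\iota_1 \ten \iota_1$ and $\iota_2 \ten \iota_2$) carry the swap action and the third (from the mixed tensors $\iota_1 \ten \iota_2$ and $\iota_2 \ten \iota_1$) is $\K\pi$-free. Applying $W \tp_{\K\pi}$ and using $W \tp_{\K\pi}(\K\pi \otimes_\K M) \cong W \otimes_\K M$ gives
\[
\e(D_{rst} \oplus D_{rst}) \cong \e(D_{rst}) \oplus \e(D_{rst}) \oplus \bigl(W \otimes_\K D_{rst}^{\ten 2}\bigr).
\]
Since $\dmap_{y+y'} = (\dmap_y, \dmap_{y'}) \circ \Delta$, the cross term $\kappa_{y,y'}$ is identified with the composition of the third-summand projection of $\e(\Delta)$ followed by the map $W \otimes_\K D_{rst}^{\ten 2} \to \e(Y)$ induced from $\dmap_y \ten \dmap_{y'}$.

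The main step is then a bidegree count. Because $W$ is $\K$-chain-homotopy equivalent to $\K$ concentrated in degree zero, levelwise K\"unneth yields $E^r(W \otimes_\K D_{rst}^{\ten 2}) \cong E^r(D_{rst}^{\ten 2}) \cong E^r(D_{rst}) \otimes E^r(D_{rst})$ by Proposition~\ref{P:easye2}. Proposition~\ref{P:univexamplecalc} shows this tensor product is supported only in the three bidegrees $(-2s, 2t)$, $(-2s-r, 2t+r-1)$, and $(-2s-2r, 2t+2r-2)$. The generators $\exte_{p,q}$ relevant for $m > t-s$ live in the ``lower $\reflectbox{L}$'' $\{-s\} \times [2t, \infty) \cup [-2s, -s] \times \{2t\}$ of $E^r(\e(D_{rst}))$, whose intersection with the above support is exactly the single point $(-2s, 2t)$ --- corresponding to the excluded case $m = t-s$ already handled by Proposition~\ref{P:squarebottom}. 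So for $m > t-s$ the third-summand image lives in a vanishing bidegree of $E^r(W \otimes_\K D_{rst}^{\ten 2})$, and additivity follows.

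The hard part will be carefully pinning down the $\K\pi$-decomposition of $(D_{rst} \oplus D_{rst})^{\ten 2}$ and verifying that $\kappa_{y,y'}$ is faithfully captured as the third-summand projection of $\e(\Delta)$ composed with the induced map to $\e(Y)$; this requires some equivariance bookkeeping around the isomorphism $W \tp_{\K\pi}(\K\pi \otimes_\K M) \cong W \otimes_\K M$. Once the factorization is established, the conclusion is a clean dimensional argument invoking only Propositions~\ref{P:easye2} and \ref{P:univexamplecalc}.
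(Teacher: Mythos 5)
Your proof is correct and takes essentially the same approach as the paper: both rely on the direct-sum decomposition $\e(X\oplus Y)\cong \e(X)\oplus\e(Y)\oplus(W\ten X\ten Y)$ (the paper's Lemma~\ref{L:sumsplitting}), factor $\dmap_{y+y'}$ through the diagonal $D_{rst}\to D_{rst}\oplus D_{rst}$, and then do the same bidegree count using $E^r(W\ten D_{rst}^{\ten 2})\cong E^r(D_{rst})\ten E^r(D_{rst})$ via Propositions~\ref{P:easye2} and \ref{P:univexamplecalc}. Your explicit isolation of the cross-term $\kappa_{y,y'}$ is a notational variant of the paper's commutative-diagram argument; the ``equivariance bookkeeping'' you flag as remaining is exactly what the paper supplies in the proof of Lemma~\ref{L:sumsplitting}.
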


Let $x,y\in Z_{-s,t}^r(Y)$. The following diagram commutes:
\begin{equation} \xymatrix{
D_{rst} \ar@{->}[dr]_{\dmap_{x+y}} \ar@{->}[rr] && D_{rst} \oplus D_{rst} \ar@{->}[dl]^{\dmap_x \oplus \dmap_y} \\
& X 
} \label{E:additivediagram}\end{equation} where the top map is the diagonal. This suggests that analyzing the spectral sequence for $\e(D_{rst} \oplus D_{rst})$ may be helpful in understanding additivity. We will need greater generality later, so we now investigate the spectral sequence associated to $\e(D_{rst}\oplus D_{r's't'})$.

\begin{lem}\label{L:sumsplitting}
Let $X$ and $Y$ be cosimplicial chain complexes. Then
\[ \e( X\oplus Y)\cong \e(X) \oplus \e(Y) \oplus (W\ten X \ten Y) \]
via
\[ e_n \ten (x+y) \ten (x'+y') \mapsto \begin{gathered} e_n \ten x \ten x' + e_n\ten y\ten y' \\ + e_n \ten x \ten y' + \sigma e_n \ten x' \ten y \end{gathered} \]
and the obvious inclusions of the first two summands along with the inclusion 
\begin{align*}
W\ten X \ten Y &\to W\tp \Big( (X\oplus Y) \ten (X\oplus Y) \Big) \\
e_n\ten x \ten y & \mapsto e_n \ten x \ten y \\
\sigma e_n \ten x \ten y & \mapsto \sigma e_n \ten x\ten y = e_n \ten y \ten x.
\end{align*}
\end{lem}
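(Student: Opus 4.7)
The proof rests on the observation that, as a $\K\pi$-module, $(X\oplus Y) \tensor (X\oplus Y)$ splits as a direct sum of three pieces with qualitatively different $\pi$-structures, and applying $W\tp -$ to this splitting produces the stated decomposition. The three pieces are $X\tensor X$, $Y\tensor Y$ (each $\pi$-invariant with $\pi$ acting by the usual swap), together with the cross-term piece $(X\tensor Y) \oplus (Y\tensor X)$, on which $\pi$ acts \emph{freely} by swapping the two summands.

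The plan is as follows. First I would expand
\[ (X\oplus Y)\tensor(X\oplus Y) = (X\tensor X) \oplus (Y\tensor Y) \oplus \bigl[(X\tensor Y) \oplus (Y\tensor X)\bigr] \]
as $\K\pi$-modules. The first two summands are $\pi$-subcomplexes, so $W\tp (X\tensor X) = \e(X)$ and $W\tp (Y\tensor Y) = \e(Y)$ appear directly. For the third summand, I would identify $(X\tensor Y)\oplus(Y\tensor X)$ with $\K\pi \ten_\K (X\tensor Y)$ via $x\tensor y \mapsto 1\tensor x\tensor y$ and $y\tensor x \mapsto \sigma \tensor x\tensor y$, whence the change-of-rings isomorphism yields
\[ W\tp \bigl[(X\tensor Y)\oplus(Y\tensor X)\bigr] \;\cong\; W \ten_\K (X\tensor Y). \]
Combining these three gives the required additive decomposition of $\e(X\oplus Y)$.

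Next I would verify that the two explicit maps in the statement implement this decomposition. For the first map, any element of $\e(X\oplus Y)$ can be written using bilinearity as a sum of four terms
\[ e_n \tensor x \tensor x' + e_n \tensor y \tensor y' + e_n \tensor x \tensor y' + e_n \tensor y \tensor x', \]
and the last term equals $\sigma e_n \tensor x' \tensor y$ in the coinvariants, matching the given formula. The first two components are evidently in $\e(X)$ and $\e(Y)$, while the third lies in the image of $W \tensor X \tensor Y$. For the second map, one checks that the prescription $e_n \tensor x\tensor y \mapsto e_n \tensor x \tensor y$ and $\sigma e_n \tensor x \tensor y \mapsto e_n \tensor y \tensor x$ is well-defined (i.e., respects the $\pi$-equivariance built into $W\tensor X\tensor Y$) and lands in the ``cross-term'' summand identified above.

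The only step requiring care is the bookkeeping around the free $\K\pi$-action on the cross terms, since the lemma presents the isomorphism in the non-equivariant direction $W\tensor X\tensor Y$ rather than as coinvariants of a free module. Once this identification is made, commutativity of both maps with the differentials (which are just the coface maps and the internal differentials on $W$, $X$, $Y$) is immediate from naturality of the constructions, and the two maps together exhibit the claimed direct sum decomposition.
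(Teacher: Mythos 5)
Your proof is correct and takes essentially the same approach as the paper: decompose $(X\oplus Y)\tensor(X\oplus Y)$ into the two diagonal $\pi$-subcomplexes plus the $\K\pi$-free cross-term $(X\tensor Y)\oplus(Y\tensor X)$, and observe that $W\tp(-)$ on a free $\K\pi$-module just forgets the $\pi$-action. You spell out the change-of-rings isomorphism and verify the explicit formulas a bit more carefully than the paper, but the argument is the same.
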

\begin{proof} We may work with a fixed cosimplicial degree, so we prove the corresponding statement for chain complexes.
If $A$ and $B$ are chain complexes, then 
\[ \Big((A\oplus B) \ten (A\oplus B)\Big) = ( A\ten A)\oplus ( B \ten B) \oplus \big(A\ten B \oplus B\ten A\big) \] as $\K\pi$-modules. Since $A\ten B \oplus B\ten A$ is a free $\pi$-module, we see
\begin{multline*}  W\tp \Big( (A\oplus B) \ten (A\oplus B)\Big)  \\ = (W\tp (A \ten A)) \oplus (W\tp (B\ten B)) \oplus (W\ten A\ten B). \end{multline*}
\end{proof}

As mentioned above, we now consider the special case
\[  \e(D_{rst}\oplus D_{r's't'})
\cong \e(D_{rst}) \oplus \e(D_{r's't'}) \oplus (W\ten D_{rst}\ten D_{r's't'}).
\] 
Conormalization is an additive functor, as is the functor which takes a bicomplex to its associated spectral sequence, so we see that we need only compute the spectral sequence for 
\[ W\ten D_{rst} \ten D_{r's't'}. \] But this is easy -- the inclusion
\[ D_{rst} \ten D_{r's't'} \to W\ten D_{rst} \ten D_{r's't'}\] induces an isomorphism on $E^1$ by the K\"unneth theorem, so by Proposition~\ref{P:easye2},
\[ E^j(W\ten D_{rst} \ten D_{r's't'}) \cong E^j(D_{rst})\ten E^j(D_{r's't'})\] for $j\geq 2$.
In particular, $E^2(W\ten D_{rst} \ten D_{r's't'})$ is zero outside of the following set of bidegrees:
\begin{equation}\label{E:bidegrees} \left\{ \begin{gathered}
 (-s-s',t+t'),\\  (-s-s'-r,t+t'+r-1), (-s-s'-r',t+t'+r'-1), \\ (-s-s'-r-r',t+t'+r+r'-2)
\end{gathered} \right\}. \end{equation}

\begin{proof}[Proof of Proposition~\ref{P:additivity}]
Applying $\e$ to the diagram \eqref{E:additivediagram}, we have the following commutative diagram:
\[ \xymatrix{
\e( D_{rst}) \ar@{->}[dr]_{\e(\dmap_{x+y})} \ar@{->}[rr] && \e(D_{rst} \oplus D_{rst}) \ar@{->}[dl] \ar@{=}[r]^-{\ref{L:sumsplitting}} & \e(D_{rst}) \oplus \e(D_{rst}) \oplus (W\ten D_{rst} \ten D_{rst}) \ar@/^1.5pc/[dll]|{\e(\dmap_x) + \e(\dmap_y) + ???} \\ & \e(Y) }\] 
Using the formula from Lemma~\ref{L:sumsplitting} we see that the composite
\[ \e(D_{rst}) \to \e(D_{rst} \oplus D_{rst}) \twoheadrightarrow \e(D_{rst}) \oplus \e(D_{rst}) \] is just the diagonal.

We examine the map
\[ E^2(\e(D_{rst})) \to E^2 (W\ten D_{rst} \ten D_{rst}) \] in bidegrees
$\set{-s}\times [2t, \infty) $ and $[-2s,-s-1] \times \set{2t}$. Of course $E^2( W\ten D_{rst} \ten D_{rst}) = E^2(D_{rst}) \ten E^2(D_{rst})$ is zero at all of these bidegrees except for \[ E^2_{-2s,2t}(W\ten D_{rst} \ten D_{rst}) = \K. \] In particular, we know that  for $m>t-s$
\begin{align*} \preext^m_v(x+y) &= \preext^m_v(x) + \preext^m_v(y) & m\geq t \\
\preext^m_h(x+y) &= \preext^m_h(x) + \preext^m_h(y) & m\in (t-s,t]
\end{align*}

We saw at the end of the last section that $\preext^{t-s}$ is additive. \end{proof}

We will need the following lemma at the beginning of the next section.

\begin{lem}\label{L:combineddegen} We have
\[ E^2 (\e(D_{1st})) = 0, \] which implies \[ E^2\left( \e(D_{rst}\oplus D_{1s't'}) \right) \cong E^2 (\e(D_{rst})). \]
\end{lem}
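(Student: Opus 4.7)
The plan is to prove $E^2(\e(D_{1st}))=0$ directly and then deduce the displayed isomorphism from Lemma~\ref{L:sumsplitting}. Using the natural isomorphism $\e(\Sigma^k X)\cong\Sigma^{2k}\e(X)$ reduces us to the case $s=t$. Although Proposition~\ref{P:homologyofD} explicitly excludes $r=1$, a direct inspection of the definitions shows that $D_{1ss}^p$ is concentrated in chain degree $s$, where it equals $\K\{[s]\hookrightarrow[p]\}$. Consequently $C(D_{1ss}\ten D_{1ss})$ is supported in the single row $q=2s$, and Proposition~\ref{P:bringoutvert} identifies $C(\e(D_{1ss}))$ with $W^v\tp B$, where $B:=C(D_{1ss}\ten D_{1ss})^{\bullet}_{2s}$ is a bounded cochain complex of $\K\pi$-modules under the swap action.

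I will decompose $B=B^{\mathrm{diag}}\oplus B^{\mathrm{free}}$ into the spans of diagonal and off-diagonal basis elements; this is a splitting of $\K\pi$-complexes since the coface $d^0$ preserves each piece (using that $d^0$ is injective on injections $[s]\hookrightarrow[p]$). A direct enumeration in the spirit of Theorem~\ref{T:e1basis} shows $B^{\mathrm{diag}}$ is nonzero only in cosimplicial degrees $s$ and $s+1$, generated by $\id_{[s]}\ten\id_{[s]}$ and $d^0\ten d^0$, with $\cd$ an isomorphism between them. The $\pi$-action on $B^{\mathrm{diag}}$ is trivial, so the vertical differential $(1+\sigma)$ on $W\tp B^{\mathrm{diag}}$ vanishes modulo $2$; the columnwise $E^1$ is then a copy of $B^{\mathrm{diag}}$ in every row $q\geq 2s$, and the induced $\ssd^1=\cd$ is an isomorphism in each row, giving $E^2(W\tp B^{\mathrm{diag}})=0$. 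Because $B^{\mathrm{free}}$ is degree-wise $\K\pi$-free, the columnwise $E^1$ of $W\tp B^{\mathrm{free}}$ is concentrated in row $q=2s$ and equals $B^{\mathrm{free}}/\pi$, so $E^2(W\tp B^{\mathrm{free}})=H^*(B^{\mathrm{free}}/\pi,\cd)$.

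To see that $B^{\mathrm{free}}/\pi$ is acyclic, I first note that $B^{\mathrm{free}}$ itself is $\K$-acyclic: Proposition~\ref{P:justD} gives $H\Tot C(D_{1ss}\ten D_{1ss})=0$, which, for a bicomplex concentrated in a single row, forces $B$ to be acyclic, and then the visible acyclicity of the two-term $B^{\mathrm{diag}}$ yields the same for $B^{\mathrm{free}}$. Next, the short exact sequence $0\to(1+\sigma)B^{\mathrm{free}}\to B^{\mathrm{free}}\to B^{\mathrm{free}}/(1+\sigma)B^{\mathrm{free}}\to 0$, in which both outer terms are canonically isomorphic to $B^{\mathrm{free}}/\pi$ by degree-wise $\K\pi$-freeness, produces in its long exact sequence a periodicity $H^{n+1}(B^{\mathrm{free}}/\pi)\cong H^n(B^{\mathrm{free}}/\pi)$; combined with boundedness of $B^{\mathrm{free}}$ this forces $H^*(B^{\mathrm{free}}/\pi)=0$. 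This periodicity argument is the main obstacle, and everything else is bookkeeping.

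For the displayed consequence, Lemma~\ref{L:sumsplitting} gives $\e(D_{rst}\oplus D_{1s't'})\cong\e(D_{rst})\oplus\e(D_{1s't'})\oplus(W\ten D_{rst}\ten D_{1s't'})$. The middle summand contributes nothing to $E^2$ by what we have just proved, and the third summand has $E^2\cong E^2(D_{rst})\ten E^2(D_{1s't'})$ by Proposition~\ref{P:easye2} and Lemma~\ref{L:tensbicplx}; this vanishes because $C(D_{1s's'})$ is the two-term complex with $\cd$ an isomorphism between two copies of $\K$, forcing $E^2(D_{1s't'})=0$.
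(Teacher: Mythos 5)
Your argument is correct, but it takes a genuinely different route from the paper's. The paper simply lists the $E^1$ basis, observes $\ssd^1(e_n\ten\id_{[s]}\ten\id_{[s]})=e_n\ten d^0\ten d^0$ on the rows above $2t$, and writes down an explicit contracting homotopy $S(e_0\ten\ep\ten\ep')=e_0\ten s^0\ep\ten s^0\ep'$ (when $0\notin\im\ep\cup\im\ep'$) on the bottom row, verifying $\ssd^1 S+S\ssd^1=\id$ directly. You instead split $B=C(D_{1ss}\ten D_{1ss})^\bullet_{2s}$ into a trivially handled diagonal piece and a $\K\pi$-free piece $B^{\mathrm{free}}$, reduce $E^2$ of the free part to $H^*(B^{\mathrm{free}}/\pi,\cd)$, and kill that via the long exact sequence of $0\to(1+\sigma)B^{\mathrm{free}}\to B^{\mathrm{free}}\to B^{\mathrm{free}}/\pi\to 0$ together with acyclicity and boundedness. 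This is essentially the same technique the paper uses in Sections~\ref{S:isolationofrows}--\ref{S:cohomvom} to compute $H^*\vom_s$ from $H^*\voc_{s,s}$ (the short exact sequence with $A=(1+\sigma)\voc_{s,s}$), so it has the virtue of reusing machinery already in play; the paper's proof is quicker and wholly self-contained but requires one to guess the contracting homotopy. One small caveat: you invoke Proposition~\ref{P:justD} for $r=1$ to get acyclicity of $B$, and that proposition's proof cites $E^\infty(D_{rss})=0$, which Proposition~\ref{P:univexamplecalc} states only for $r\geq 2$; the claim is trivially true for $r=1$ (as you effectively verify yourself when you note $C(D_{1s's'})$ is the two-term complex $\K\xrightarrow{\cong}\K$), so this is a gap in citation rather than in mathematics, but it deserves a sentence. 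Everything else, including the reduction of the displayed implication to $E^2(D_{1s't'})=0$, matches the paper's reasoning.
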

\begin{proof}
The implication comes from \[ \e(D_{rst}\oplus D_{1s't'}) \cong \e(D_{rst}) \oplus \e(D_{1s't'}) \oplus (W\ten D_{rst} \ten D_{1s't'}) \]
and the fact that $E^2(W\ten D_{rst} \ten D_{1s't'}) \cong E^2(D_{rst}) \ten E^2(D_{1s't'}) =0$.

For the main statement, notice that $D_{1ss}$ is concentrated in homological degree $s$ and $E^0=E^1$ for this spectral sequence since
\[ H_k (D_{1ss}^p) = (D_{1ss}^p)_k = \begin{cases} \Delta^p_s & k=s \\ 0 & k \neq s. \end{cases} \]

The basis for the conormalization of $\e(D_{1ss})$ consists of 
\begin{gather*} e_n \ten \id_{[s]} \ten \id_{[s]} \\ e_n \ten d^0 \ten d^0
\end{gather*}
and also 
\[
e_0 \ten \ep \ten \ep'
\] where $\ep < \ep'$ in $\Delta^p_s$ and $[1,p] \ci \im \ep \cup \im \ep'$.
The differential $\ssd^1$ on a basis element in homological degree $2s$ is
\[ \ssd^1 (e_0 \ten \ep \ten \ep') = \begin{cases} e_0 \ten d^0\ep \ten d^0\ep' & \text{when } 0\in \im \ep \cup \im \ep', \\ 0 & \text{when } 0\notin \im \ep \cup \im \ep'. \end{cases}  \]
We define a contracting homotopy $S$ with
\[ S(e_0 \ten \ep \ten \ep') = \begin{cases} 0  & \text{when } 0\in \im \ep \cup \im \ep', \\ e_0 \ten s^0\ep \ten s^0\ep', & \text{when } 0\notin \im \ep \cup \im \ep' \end{cases}  \]
which then satisfies $\ssd^1 S + S \ssd^1 = \id$.
%
For the other degrees, we have
\[ \ssd^1 (e_n \ten \id_{[s]} \ten \id_{[s]}) = e_n \ten d^0 \ten d^0 \] so we see that $E^2=0$.
\end{proof}


\section{Definition of the Operations}\label{S:opdefinf}
In section~\ref{S:products} we defined (additive) operations $\preext^m$ on $r$-cycles. The goal of this section is to show that these induce operations which are defined on classes in the spectral sequence. 
The simplest thing would be to show that $\preext^m$ vanishes on
\[ B_{-s,t}^r = \partial Z_{-s+r-1,t-r+2}^{r-1} + Z^{r-1}_{-s-1,t+1} \]
for all $m$, but this does not happen. It turns out that the horizontal operations $\preext^m_h$ may be nonzero on $\partial Z_{-s+r-1,t-r+2}^{r-1}$, which leads to the indeterminacy in Theorem~\ref{T:inftymain}.

We begin with the easy part of $B_{-s,t}^r$: elements in lower filtration.
\begin{lem}\label{L:lowerfiltr}
The homomorphisms $\preext^m$ vanish on $Z^{r-1}_{-s-1,t+1}$ for $r\geq 2$.
\end{lem}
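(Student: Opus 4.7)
The plan is to exploit the fact that $z \in Z^{r-1}_{-s-1,t+1}$ lies in the filtration $F^{-s-1}$, one step deeper than required for an element of $Z^r_{-s,t}$. Writing $z$ in the notation of the definition preceding Proposition~\ref{P:universalprop}, this means that the component $y^s_t$ of $z$ in bidegree $(-s,t)$ vanishes. Tracing through the construction of the representing map, $C(\dmap_z)$ therefore annihilates both $\Sigma^{t-s}\id_{[s]}$ and $\Sigma^{t-s}\cd\id_{[s]}$. The skeletal filtration $\sk_{s-1}\Delta \subset \sk_s\Delta \subset \sk_{s+r-1}\Delta$ gives a short exact sequence
\[ 0 \to D_{1st} \to D_{rst} \to D_{r-1,s+1,t+1} \to 0, \]
and the quotient map $D_{rst} \to D_{r-1,s+1,t+1}$ kills exactly the two generators just mentioned (they lie in homological degree $s$, which is precisely what the quotient forgets). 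Hence $\dmap_z$ factors as
\[ D_{rst} \to D_{r-1,s+1,t+1} \overset{\dmap_z^{(r-1)}}{\to} Y, \]
where $\dmap_z^{(r-1)}$ is the representing map for $z$ regarded as an $(r-1)$-cycle in $Z^{r-1}_{-s-1,t+1}$; one verifies they agree by comparing the two explicit formulas on the generators $\Sigma^{t-s}\id_{[s+k]}$ and $\Sigma^{t-s}\cd\id_{[s+k]}$ for $k\geq 1$.

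Applying $\e$, the external operations $\preext^m(z) = E^r(\e(\dmap_z))(\exte_{p,q})$ factor through $E^r(\e(D_{r-1,s+1,t+1}))$. It therefore suffices to show that, under the induced map $E^r(\e(D_{rst})) \to E^r(\e(D_{r-1,s+1,t+1}))$, each universal class $\exte_{-s,m+t}$ ($m\geq t$) and $\exte_{m-s-t,2t}$ ($m\in[t-s,t]$) lands in the zero group. For $r\geq 3$, Theorem~\ref{T:e2page} applied to $D_{r-1,s+1,t+1}$ shows that the entire support of its $E^r$ page lies in columns $\leq -s-1$ and rows $\geq 2t+2$. In particular the column $-s$ and the row $2t$ are vacant, so the universal classes, which live respectively in that column and that row, map to zero.

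The edge case $r=2$ is handled separately: here $D_{r-1,s+1,t+1}=D_{1,s+1,t+1}$, and Lemma~\ref{L:combineddegen} provides $E^2(\e(D_{1,s+1,t+1}))=0$, so the factorization passes through the zero object and the conclusion is immediate. The main obstacle is the initial factorization of $\dmap_z$; once that bookkeeping is in place, the proof reduces to an elementary comparison of bidegree support, with no genuine differential computations needed.
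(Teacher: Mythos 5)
Your proof is correct and follows essentially the same route as the paper: factor the representing map $\dmap_z$ through $D_{r-1,s+1,t+1}$, then observe via Theorem~\ref{T:e2page} (or Lemma~\ref{L:combineddegen} when $r=2$) that the target spectral sequence vanishes in the column $\{-s\}$ and the row $\{2t\}$ where the universal classes live. The only cosmetic difference is that you realize the factorizing map $D_{rst}\to D_{r-1,s+1,t+1}$ explicitly as the quotient in the short exact sequence $0\to D_{1st}\to D_{rst}\to D_{r-1,s+1,t+1}\to 0$, whereas the paper constructs it directly as $\dmap_\imath$; these are the same map, and your identification makes the commutativity verification a little more transparent.
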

\begin{proof}
Write $r'=r-1, s'=s+1, t'=t+1$ and let $y\in Z^{r'}_{-s',t'}(Y) \ci Z^r_{s,t}(Y)$. 
Then the following commutes
\[ \xymatrix{
D_{rst} \ar@{->}[dr]_{\dmap_{y}^r} \ar@{->}[rr]^{\dmap_\imath} && D_{r's't'} \ar@{->}[dl]^{\dmap_y^{r'}} \\
& Y 
}\]
where, of course, we regard $\imath \in Z^{r'}_{-s',t'}(D_{r's't'})$ as an element of $Z^r_{-s,t}$.
If $r'\geq 2$, then by Theorem~\ref{T:e2page} the $E^2$ page of the spectral sequence of $\e(D_{r's't'})$ is zero except for the following ranges of bidegrees:
\begin{gather*}
\set{-s-1} \times [2t+2,\infty) \\
\set{-s-r} \times [2t+2r-2,\infty) \\
[-2s-2,-s-2] \times \set{2t+2} \\
\set{ -2s-r-1} \times \set{2t+r} \\
[-2s-2r,-s-r-1] \times \set{2t+2r-2}
\end{gather*}
which means that $E^r(\e(D_{r's't'}))$ is zero on the ranges we are interested in: \[ \set{-s}\times [2t,\infty) \text{ and } [-2s,-s-1]\times \set{2t}\]
Returning to the definition \eqref{E:opdef} of $\preext$, we note that the diagram
\[ \xymatrix{
E^r(\e(D_{rst})) \ar@{->}[dr] \ar@{->}[rr] && E^r(\e(D_{r's't'})) \ar@{->}[dl] \\
& E^r(\e(Y)) 
}\]
commutes and the rightmost composition takes $\exte_{p,q}$ to zero for \[ (p,q)\in \set{-s}\times [2t,\infty) \cup [-2s,-s-1]\times \set{2t},\] so all of the $\preext$ must vanish on $y$.

The case $r=2$ is even easier, since we know $E^2(\e(D_{1s't'})) = 0$ by Lemma~\ref{L:combineddegen}.

\end{proof}

We now shift our attention to $\partial Z_{-s+r-1,t-r+2}^{r-1}$. We run into a problem immediately, for we would like to use the diagram 
\[ \xymatrix{
D_{rst} \ar@{->}[dr]_-{\dmap_{\partial y}} \ar@{->}[rr]^-{\dmap_{\partial \imath}} && D_{r-1,s+1-r, r-t-2} \ar@{->}[dl]^-{\dmap_y} \\
& Y
}\]
where $y\in Z_{-s+r-1,t-r+2}^{r-1}$, but this diagram \emph{does not commute}. 
To see this, write 
\[ y= \sum_{j=s-r+1}^\infty y^j \qquad\text{ where } \qquad y^j \in C(Y)^j. \] We have 
\[ \partial y = \sum_{k=s}^\infty (\cd y^{k-1} + \hd y^{k}) \in F^{-s} \]
since $y$ is an $(r-1)$-cycle. Then
\[ C(\dmap_{\partial y}) \left(\sum_{j=s}^{s+r-1} \id_{[j]} \right) = \sum_{j=s}^{s+r-1} (\cd y^{j-1} + \hd y^j) \]
and
\begin{align*} C(\dmap_y) C(\dmap_{\partial \imath}) \left(\sum_{j=s}^{s+r-1} \id_{[j]} \right) &= C(\dmap_y) (\cd \id_{[s-1]}) \\
&= \cd y^{s-1}. \end{align*}
We can guarantee these expressions are equal when $y^s, y^{s+1}, \dots, y^{s+r-1}$ are all zero. We will need the following special case in the proof of Lemma~\ref{L:verticalpartial}.
\begin{lem}\label{L:partialdiagram} If \[ y= \sum_{j=s-r+1}^{s-1} y^j \in Z_{-s+r-1,t-r+2}^{r-1}\] then the diagram 
\[ \xymatrix{
D_{rst} \ar@{->}[dr]_-{\dmap_{\partial y}} \ar@{->}[rr]^-{\dmap_{\partial \imath}} && D_{r-1,s+1-r, r-t-2} \ar@{->}[dl]^-{\dmap_y} \\
& Y
}\]
commutes. 
\end{lem}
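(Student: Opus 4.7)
My plan is to verify commutativity at the level of bicomplexes $C(D_{rst}) \to C(Y)$, since both composites are induced by bicomplex maps and Dold–Kan (or equivalently, the fact that cosimplicial maps between suitable cosimplicial objects are determined by their restrictions to the conormalization) then upgrades this to the cosimplicial setting. The conormalization $C(D_{rst})$ is free with basis
$\Sigma^{t-s}\id_{[s+k]}$ in bidegree $(-s-k, t+k)$ and
$\Sigma^{t-s}\cd\id_{[s+k]}$ in bidegree $(-s-k-1, t+k)$,
for $k = 0, \ldots, r-1$, so it suffices to check that the two composites send each of these basis elements to the same thing in $C(Y)$.

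Next I would compute $\partial y$ explicitly under the standing hypothesis $y^j = 0$ for $j \geq s$. The cosimplicial-degree-$p$ component of $\partial y$ is $\hd y^p + \cd y^{p-1}$; when $p \geq s+1$ both $y^p$ and $y^{p-1}$ vanish, and when $p < s$ the $(r-1)$-cycle condition $\partial y \in F^{-s}$ forces it to be zero. The only surviving term is $(\partial y)^s = \hd y^s + \cd y^{s-1} = \cd y^{s-1}$. In other words, under the hypothesis, $\partial y$ is the element $\cd y^{s-1}$, concentrated in bidegree $(-s,t)$. Similarly, $\partial \imath = \cd \id_{[s-1]}$ in $D_{r-1, s-r+1, t-r+2}$ is concentrated in bidegree $(-s,t)$.

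Feeding this into the formulas from the definition of the representing map, the left-hand map $\dmap_{\partial y}$ sends $\Sigma^{t-s}\id_{[s]} \mapsto \cd y^{s-1}$, and sends all other basis elements to $0$ (the $\Sigma^{t-s}\id_{[s+k]}$ for $k \geq 1$ hit zero because $(\partial y)^{s+k}_{t+k} = 0$, while the $\Sigma^{t-s}\cd\id_{[s+k]}$ hit $\cd$ of these, which is again zero, using $\cd \cd = 0$ on the $k=0$ case). On the other side, $\dmap_{\partial \imath}$ sends $\Sigma^{t-s}\id_{[s]} \mapsto \Sigma^{t-s+1} \cd\id_{[s-1]}$ and kills every other generator for the same reasons; then applying $\dmap_y$ gives $\Sigma^{t-s}\id_{[s]} \mapsto \cd y^{s-1}$ and the rest to zero. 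The two composites therefore agree on every basis element, proving the lemma.

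The main step is really just the bookkeeping identifying $\partial y$ under the hypothesis; there is no essential obstacle, only the need to keep track of suspensions and to use the $(r-1)$-cycle condition to conclude that $\partial y$ is concentrated at cosimplicial degree $s$ with value $\cd y^{s-1}$. Without the hypothesis $y^j = 0$ for $j \geq s$, the element $\partial y$ would have further nonzero components in cosimplicial degrees $\geq s+1$ which the right-hand composite (routed through $\partial \imath$) could not see, which is precisely why the unhypothesized diagram fails to commute.
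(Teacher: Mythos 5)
Your argument is correct and is essentially the paper's own reasoning: the paper verifies commutativity at the level of the conormalized bicomplex $C(D_{rst})$, noting that under the hypothesis $y^s = \cdots = y^{s+r-1} = 0$ the two composites agree on the generator $\imath = \sum_j \id_{[j]}$ (whose summands live in distinct bidegrees, so this is equivalent to your termwise check on each $\id_{[s+k]}$), and then invokes Dold--Kan. Your additional check of the generators $\cd\id_{[s+k]}$ is harmless but redundant, since both composites are already maps of bicomplexes and so are determined by their values on the $\id_{[s+k]}$.
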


Suppose that
\[ y=\sum_{j=s-r+1}^\infty y^j \in Z_{-s+r-1,t-r+2}^{r-1}. \]
Then the tail $\sum_{j=s}^\infty y^j$ is in \[  F^{-s} \ci Z_{-s+r-1,t-r+2}^{r-1}, \]
so we can split $y$ up into two pieces
\[ y = \sum_{j=s-r+1}^{s-1} y^j + \sum_{j=s}^\infty y^j \]
both of which are in $Z_{-s+r-1,t-r+2}$. 
We first treat the tail of $y$.
\begin{prop}
The homomorphisms $\preext^m$ vanish on $\partial F^{-s}$.
\end{prop}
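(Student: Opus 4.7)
The plan is to show that $\preext^m(\partial z)$ already vanishes on the $E^1$ page of $\e(Y)$, by checking that every basis element of the ``lower L'' of $E^1(\e(D_{rst}))$ (Theorem~\ref{T:e1basis}) is sent by $\e(\dmap_{\partial z})$ to a $\hd$-boundary. Once this is done, passing to $E^r$ is automatic since $E^r$ is a subquotient of $E^1$. The crucial chain-level observation is that, writing $z = \sum_{j \ge s} z^j$ with $z^{s-1}=0$, the bottom component satisfies $(\partial z)^s = \hd z^s + \cd z^{s-1} = \hd z^s$, an internal boundary in $Y^s$. Since $\dmap_{\partial z}$ is cosimplicial by construction (Proposition~\ref{P:universalprop}) and the cofaces $d^k$, $k\ge 1$, commute with the internal differential $\hd$, it follows that every $\ep \in \h^p_s$---written $\ep = d^{j_1}\cdots d^{j_{p-s}}\id_{[s]}$ with all $j_i \ge 1$---is sent to $\hd u_\ep$, where $u_\ep = d^{j_1}\cdots d^{j_{p-s}} z^s \in Y^p_{t+1}$.

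For the vertical operations, I will use the $E^1$-representative $e_{m-t} \otimes \id_{[s]} \otimes \id_{[s]}$ of $\exte_{-s,\,m+t}$, whose image under $\e(\dmap_{\partial z})$ is $e_{m-t} \otimes \hd z^s \otimes \hd z^s$. I recognize this as $q^m(\hd z^s)$, and the identity $q^m d = d q^m$ noted immediately after the definition of $q^m$ gives $q^m(\hd z^s) = q^m(d z^s) = d\, q^m(z^s)$---a boundary in $\e(Y^s) = C(\e(Y))^s$, with the convention that terms involving $e_{-n}$ are zero. Thus $\preext_v^m(\partial z) = 0$ already in $E^1_{-s,\,m+t}(\e(Y))$, hence in $E^r$.

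For the horizontal operations I will choose any $E^1$-lift of $\exte_{m-s-t,\,2t}$; by Theorem~\ref{T:e1basis} such a lift is a $\K$-linear combination of basis elements $e_0 \otimes \ep \otimes \ep'$ with $\ep, \ep' \in \h^{s+t-m}_s$. Each summand is sent to $e_0 \otimes \hd u_\ep \otimes \hd u_{\ep'}$, and using $d e_0 = (1+\sigma)e_{-1} = 0$ together with $\hd^2 = 0$ a direct computation in $\e(Y^{s+t-m})$ gives $e_0 \otimes \hd u_\ep \otimes \hd u_{\ep'} = d(e_0 \otimes u_\ep \otimes \hd u_{\ep'})$. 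So each summand, and hence the entire lift, is a $\hd$-boundary, showing $\preext_h^m(\partial z) = 0$ in $E^1$ and therefore in $E^r$.

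The only point requiring care is the step $\dmap_{\partial z}(\ep) = \hd u_\ep$, which amounts to checking that the chain-level prescription in Proposition~\ref{P:universalprop} transports via positive cofaces in the prescribed way; this is short but needs to be stated explicitly. Everything else is forced: vanishing on $E^1$ is much stronger than vanishing on $E^r$, and no higher-page comparison is needed.
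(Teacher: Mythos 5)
Your proof is correct but takes a genuinely different route from the paper. The shared key idea is to show vanishing already on $E^1$ (which, since the image is also an $r$-cycle, forces vanishing on $E^r$). Where you diverge is the mechanism. The paper first reduces, via additivity and Lemma~\ref{L:lowerfiltr}, to the case $z = z^s_{t+1}$ concentrated in a single cosimplicial degree, and then constructs a factorization $C(D_{rst}) \to C(\vsquare) \to C(Y)$ through the cosimplicial chain complex $\vsquare$ whose levelwise vertical homology vanishes; $E^1(\e(\vsquare)) = 0$ then does all the work, with no explicit chain-level computation in $\e(Y)$. You instead compute the $E^1$-images of the basis elements directly and exhibit them as $\hd$-boundaries via the identities $q^m \hd = \hd q^m$ and $e_0 \otimes \hd u \otimes \hd u' = \hd(e_0 \otimes u \otimes \hd u')$, and as a bonus you do not need the initial reduction to $z = z^s$ because your argument only sees the bottom component $(\partial z)^s = \hd z^s$.

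The one point you flag as "short but needs to be stated explicitly" is genuinely the crux, and I would caution against underestimating it. Your $u_\ep = d^{j_1}\cdots d^{j_{p-s}} z^s$ has $z^s$ sitting in the conormalization $C(Y)^s_{t+1}$, which is a quotient of $Y^s_{t+1}$, while the cofaces $d^j$ are defined on $Y^\bullet$, not on $C(Y)^\bullet$. What you actually need is that $\dmap_{\partial z}^s(\id_{[s]}) \in Y^s_t$ (the image of $\id_{[s]}$ under the Dold--Kan lift of the bicomplex map $C(\dmap_{\partial z})$) is a $\hd$-boundary in $Y^s$. This does hold: since $D_{rss}^{s-1} = 0$, $\dmap_{\partial z}^s(\id_{[s]})$ is killed by all codegeneracies, i.e.\ lies in $N^* Y^s := \bigcap_i \ker(s^i)$; the composite $N^* Y^s \hookrightarrow Y^s \twoheadrightarrow C(Y)^s$ is an isomorphism of chain complexes, so the unique $N^*$-lift of the $\hd$-boundary $(\partial z)^s = \hd z^s$ is itself a $\hd$-boundary, namely $\hd w$ where $w \in N^* Y^s_{t+1}$ is the lift of $z^s$. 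Once this is in place the rest of your computation is fine (and indeed you only need a boundary in $W\tp(Y^p\otimes Y^p)$, whose image in $C(\e(Y))^p$ is again a boundary since the projection is a chain map). The elegance of the paper's $\vsquare$ factorization is precisely that it never has to unwind the Dold--Kan reconstruction explicitly; your route is more computational and requires making this reconstruction step honest.
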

\begin{proof}
Lemma~\ref{L:lowerfiltr} guarantees that the operations vanish on elements of $\partial F^{-s-1} \ci Z^{r-1}_{-s-1,t+1}$. By additivity we may consider the boundary of a single element in cosimplicial degree $s$:
\[ y= y_{t+1}^s. \]

Define a cosimplicial chain complex $\vsquare$ (depending on $s$ and $t$) 
schematically by
\[ \xymatrix{
\Delta_s^\bullet \ar@{->}[d]^{\id} & \text{in homological degree } t+1\\
\Delta_s^\bullet & \text{in homological degree } t
}\] with zero outside of these two homological degrees. The conormalization of this is pictured in the right hand side of Figure~\ref{F:dtov}, as well as a map of bicomplexes from $C(D_{rst})$ to $C(\vsquare)$ (open circles map to open circles).


\begin{figure}[bh]
\centering
\begin{tikzpicture}
    \draw[->] (0.2,0) -- (-4,0);
    \draw[->] (0,-0.2) -- (0,4);
    \draw (-.5,2pt) -- (-.5,-2pt) node[below] {$-s$};
    \draw (-1,2pt) -- (-1,-2pt); 
    \draw (-2.5,2pt) -- (-2.5,-2pt); 
    \draw (-3,2pt) -- (-3,-2pt); 
    \draw (2pt,.5) -- (-2pt,.5) node[right] {$t$};
    \draw (2pt,1) -- (-2pt,1) node[right] {$t+1$};
    \draw (2pt,2.5) -- (-2pt,2.5) node[right] {$t+r-2$};
    \draw (2pt,3) -- (-2pt,3) node[right] {$t+r-1$};
    \draw (-.5,.5) circle (2pt);
    \draw (-1,1) circle (2pt);
    \fill (-2.5,2.5) circle (2pt);
    \fill (-3,3) circle (2pt);
    \draw (-1,.5) circle (2pt);
    \fill (-1.5, 1) circle (2pt);
    \fill (-2.5,2) circle (2pt);
    \fill (-3,2.5) circle (2pt);
    \fill (-3.5,3) circle (2pt);
    \draw [->] (-.6,.5) -- (-.9,.5);
    \draw [->] (-1.1,1) -- (-1.4,1);
    \draw [->] (-2.1,2) -- (-2.4,2);
    \draw [->] (-2.6,2.5) -- (-2.9,2.5);
    \draw [->] (-3.1,3) -- (-3.4,3);    
    \draw [->] (-1,.9) -- (-1,.6);
    \draw [->] (-1.5,1.4) -- (-1.5,1.1);
    \draw [->] (-2.5,2.4) -- (-2.5,2.1);
    \draw [->] (-3,2.9) -- (-3,2.6);
    \draw [dotted] (-1.5,1.5)--(-2,2);
    \draw [dotted] (-1.8,1.3)--(-2.2,1.7);
    \draw[->] (5.2,0) -- (3,0);
    \draw[->] (5,-0.2) -- (5,4);
    \draw (4.5,2pt) -- (4.5,-2pt) node[below] {$-s$};
    \draw (4,2pt) -- (4,-2pt); 
    \draw (5cm + 2pt,.5) -- (5cm -2pt,.5) node[right] {$t$};
    \draw (5cm + 2pt,1) -- (5cm-2pt,1) node[right] {$t+1$};
    \draw (4.5,.5) circle (2pt);
    \draw (4,1) circle (2pt);
    \fill (4.5,1) circle (2pt);
    \draw (4,.5) circle (2pt);
    \draw [->] (4.4,.5) -- (4.1,.5);   
    \draw [->] (4.4,1) -- (4.1,1); 
    \draw [->] (4,.9) -- (4,.6);
    \draw [->] (4.5,.9) -- (4.5,.6);
    \draw [->, very thick] (1,1.5) arc (110:70:2cm);
\end{tikzpicture}
\caption{$C(D_{rst}) \to C(\vsquare)$}\label{F:dtov}
\end{figure}
This figure tells us that the diagram
\[ \xymatrix{ 
C(D_{rst}) \ar@{->}[rr]\ar@{->}[dr]_-{C(\dmap_{\partial z})} && C(\vsquare) \ar@{->}[dl] \\
& C(Y) 
}\] commutes, where
$C(\vsquare) \to C(Y)$ is the map taking the square in the former to the square
\[ \xymatrix{
 \cd y^s_{t+1} \ar@{->}[d] & y^s_{t+1} \ar@{->}[d] \ar@{->}[l] \\
 \cd\hd y^s_{t+1} & \hd y^s_{t+1} \ar@{->}[l]
}\]
in $C(Y)$. 
We apply $\e$ to get the commutative diagram
\[ \xymatrix{ 
\e(D_{rst}) \ar@{->}[rr]\ar@{->}[dr]_-{\e(\dmap_{\partial y})} && \e(\vsquare) \ar@{->}[dl] \\
& \e(Y) 
}\] The vanishing of the vertical homology of $\vsquare$ implies the vanishing of $E^1(\e(\vsquare))$, so $E^1(\e(\dmap_{\partial y}))=0$.
\end{proof}

\begin{lem}\label{L:verticalpartial}
The vertical maps $\preext_v$ vanish on $\partial Z_{-s+r-1,t-r+2}^{r-1}$ for $r>2$.
\end{lem}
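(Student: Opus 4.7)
The plan is to decompose $y$ into a ``head'' and a ``tail'' and treat each piece separately. Write $y = y_h + y_t$, with
\[ y_h = \sum_{j=s-r+1}^{s-1} y^j, \qquad y_t = \sum_{j \geq s} y^j. \]
Since $y_t \in F^{-s}$ we have $\partial y_t \in F^{-s}$, and since $\partial y \in F^{-s}$ as well, the difference $\partial y_h = \partial y - \partial y_t$ lies in $F^{-s}$. Thus $y_h$ is itself an element of $Z^{r-1}_{-s+r-1,t-r+2}$ of the specific form required by Lemma~\ref{L:partialdiagram}. By additivity (Proposition~\ref{P:additivity}), it suffices to show that $\preext_v^m$ vanishes separately on $\partial y_h$ and $\partial y_t$.

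For the tail, $\partial y_t \in \partial F^{-s}$, so $\preext_v^m(\partial y_t) = 0$ by the proposition immediately preceding this lemma. For the head, Lemma~\ref{L:partialdiagram} provides the commutative factorization $\dmap_{\partial y_h} = \dmap_{y_h} \circ \dmap_{\partial \imath}$ through $D_{r-1,s-r+1,t-r+2}$, so after applying $\e$ and passing to $E^r$,
\[ \preext_v^m(\partial y_h) = E^r(\e(\dmap_{y_h})) \circ E^r(\e(\dmap_{\partial \imath}))(\exte_{-s,m+t}). \]
It therefore suffices to show $E^r_{-s,m+t}(\e(D_{r-1,s-r+1,t-r+2})) = 0$ for every $m \geq t$.

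This vanishing comes directly from Corollary~\ref{C:vanishingofbidegrees}, applied to the intermediate example under the index translation $(r,s,t) \mapsto (r-1,s-r+1,t-r+2)$. The ``left column'' vanishing $E^{r+1}_{-s-r,q} = 0$ for $q \geq 2t+2r-1$ translates to $E^r_{-s,q} = 0$ for $q \geq 2t+1$, which covers $m > t$. The borderline case $m = t$ is handled by the corner entry of the corollary, namely $E^{r+1}_{-s-r,2t+2r-2} = 0$ (the $p = -s-r$ instance of $E^{-s-p+1}_{p,2t+2r-2} = 0$), which translates to $E^r_{-s,2t} = 0$. The hypothesis $r > 2$ is used precisely to guarantee $r-1 \geq 2$, so that Corollary~\ref{C:vanishingofbidegrees} is applicable to the intermediate spectral sequence. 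The main subtlety is that the $m = t$ case is not covered by the simpler left-column vanishing and requires the separate corner entry, reflecting the fact that this borderline operation coincides with the horizontal $\preext_h^t$ and might otherwise be expected to carry nontrivial indeterminacy.
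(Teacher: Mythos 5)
Your proof is correct and follows essentially the same argument as the paper: reduce to a head term via additivity and the preceding proposition, factor $\partial y_h$ through $D_{r-1,s+1-r,t-r+2}$ via Lemma~\ref{L:partialdiagram}, and read off the vanishing from the ``upper left portion'' of Corollary~\ref{C:vanishingofbidegrees}. One small slip: the entry you need for $m=t$ is $E^{r+1}_{-s-r,2t+2r-2}=0$ (the $p=-s-r$ instance of the third line, which your parenthetical correctly names), not the corner entry $E^{r+1}_{-2s-2r,2t+2r-2}=0$; the latter translates to $E^r_{-2s,2t}=0$ in $\e(D_{r-1,s+1-r,t-r+2})$, which is the wrong bidegree.
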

\begin{proof}
Let $r'=r-1$, $s'=s+1-r$, $t'=t-r+2$ and suppose that $y\in Z_{-s+r-1,t-r+2}^{r-1}$ has the form \[ y = \sum_{j=s-r+1}^{s-1} y^j, \] an assumption that  the previous proposition and additivity allow us to make. Since $y$ has this form, the following diagram commutes by Lemma~\ref{L:partialdiagram}.
\[ \xymatrix{
D_{rst} \ar@{->}[dr]_{\dmap_{\partial y}} \ar@{->}[rr]^{\dmap_{\partial\imath}} && D_{r's't'} \ar@{->}[dl]^{\dmap_y} \\
& Y 
}\]
Applying Theorem~\ref{T:e2page} to $\e(D_{r's't'})$ (when $r>2$) we find nonzero terms exactly in the following bidegrees:
\begin{gather*}
\set{-s-1-r}\times [2t-2r+4,\infty) \\
\set{-s}\times [2t,\infty) \\
[-2s+2r-2,-s+r-2] \times \set{2t-2r+4} \\
\set{-2s+2r-1} \times \set{2t-r+2} \\
[-2s,-s-1] \times \set{2t}.
\end{gather*}
The column $\set{-s} \times [2t,\infty)$ vanishes at $E^r$ by the `upper left portion' part of Corollary~\ref{C:vanishingofbidegrees}, so the vertical operations $\preext_v(\partial y)$ vanish at $E^r$.
\end{proof}

\begin{lem}\label{L:vanishingr2}
If $r=2$ then the homomorphisms $\preext$ vanish on $\partial Z_{-s+1,t}^{1}$.
\end{lem}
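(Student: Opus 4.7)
The plan is to mimic the proof of Lemma~\ref{L:verticalpartial}, but exploit the fact that when $r=2$ the intermediate universal example becomes $D_{1,s-1,t}$, whose homotopy orbit complex has $E^2=0$ by Lemma~\ref{L:combineddegen}. This stronger vanishing lets us kill both vertical and horizontal operations simultaneously, rather than only the vertical ones.

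First I would reduce to the head of a $1$-cycle. Let $y\in Z^1_{-s+1,t}(Y)$ and decompose
\[ y = y^{s-1} + \sum_{p\geq s} y^p, \]
so that the tail $\sum_{p\geq s} y^p$ lies in $F^{-s}$ and is therefore itself a $1$-cycle. Since $\partial\bigl(\sum_{p\geq s} y^p\bigr) \in \partial F^{-s}$, the preceding proposition guarantees that $\preext^m$ vanishes on this part. By additivity (Proposition~\ref{P:additivity}), it suffices to prove $\preext^m(\partial y^{s-1}) = 0$.

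Second, I would invoke the factorization diagram. Since $y^{s-1}$ is concentrated in cosimplicial degree $s-1$, Lemma~\ref{L:partialdiagram} (with $r=2$, $r'=1$, $s'=s-1$, $t'=t$) gives a commutative triangle
\[ \xymatrix{
D_{2,s,t} \ar@{->}[dr]_-{\dmap_{\partial y^{s-1}}} \ar@{->}[rr]^-{\dmap_{\partial \imath}} && D_{1,s-1,t} \ar@{->}[dl]^-{\dmap_{y^{s-1}}} \\
& Y.
}\]
Applying the functor $\e$ and passing to $E^2$, the map $E^2(\e(\dmap_{\partial y^{s-1}}))$ factors through $E^2(\e(D_{1,s-1,t}))$, which vanishes by Lemma~\ref{L:combineddegen}. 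Consequently $E^2(\e(\dmap_{\partial y^{s-1}}))$ is the zero map, and in particular sends every universal class $\exte_{p,q}$ to zero. By Definition~\ref{D:preext}, this means $\preext^m_v(\partial y^{s-1}) = 0$ for all $m\geq t$ and $\preext^m_h(\partial y^{s-1}) = 0$ for all $m\in[t-s,t]$.

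The only subtlety I anticipate is being slightly careful about the indexing: one must check that the hypotheses of Lemma~\ref{L:partialdiagram} are literally satisfied (i.e., that the head $y^{s-1}$ is supported in cosimplicial degrees $[s-r+1,s-1] = \{s-1\}$ when $r=2$), and that the values $r'=1$, $s'=s-1$, $t'=t$ of the triple are consistent with the bidegree $(-s+1,t)$ of the class $\imath \in Z^1_{-s+1,t}(D_{1,s-1,t})$. Once those bookkeeping checks are made, the argument reduces entirely to the vanishing statement of Lemma~\ref{L:combineddegen}, which is what makes the $r=2$ case cleaner than $r>2$.
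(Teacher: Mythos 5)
Your proposal is correct and follows essentially the same approach as the paper: reduce to the head of the $1$-cycle via the $\partial F^{-s}$ proposition and additivity, then apply $E^2\e$ to the triangle from Lemma~\ref{L:partialdiagram} and invoke Lemma~\ref{L:combineddegen}. The paper's own proof is extremely terse and leaves the reduction step (which it spells out in the proof of Lemma~\ref{L:verticalpartial}) implicit here; you have usefully made it explicit, and your bookkeeping (that $[s-r+1,s-1]=\{s-1\}$ when $r=2$, and that $r'=1$, $s'=s-1$, $t'=t$) checks out.
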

\begin{proof}
Apply $E^2 \e$ to the diagram from Lemma~\ref{L:partialdiagram}:
\[ \xymatrix{
D_{2st} \ar@{->}[dr]_{\dmap_{\partial y}} \ar@{->}[rr]^{\dmap_{\partial\imath}} && D_{1s't'} \ar@{->}[dl]^{\dmap_y} \\
& Y
}\]
Lemma~\ref{L:combineddegen} says that $E^2 ( \e (D_{1s't'}))=0$.
\end{proof}

\begin{thm}\label{T:inftymain} The homomorphisms of Proposition~\ref{P:additivity} induce homomorphisms
\begin{align*}
\ext_v^m:& E_{-s,t}^r(Y) \to E^r_{-s,m+t}(\e(Y)) &  m&\geq t \\
\ext_h^m:& E_{-s,t}^r(Y) \to E^w_{m-s-t,2t}(\e(Y)) & m&\in[t-s,t]
\end{align*}
where \[ w = \begin{cases} r & m=t-s \\ 2r-2 & m\in [t-s+1,t-r+2] \\ r+t-m & m\in [t-r+3, t]. \end{cases} \] Notice that if $r=2$, then $w=2$.
\end{thm}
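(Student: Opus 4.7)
Since each $\preext^m$ is additive (Proposition~\ref{P:additivity}, together with Proposition~\ref{P:squarebottom} for the bottom case), descent to $E^r_{-s,t}(Y)$ amounts to showing that $\preext^m(b) \in E^r(\e(Y))$ becomes zero in $E^w(\e(Y))$ for every $b \in B^r_{-s,t} = \partial Z^{r-1}_{-s+r-1,t-r+2} + Z^{r-1}_{-s-1,t+1}$. The summand $Z^{r-1}_{-s-1,t+1}$ is killed already on $E^r$ by Lemma~\ref{L:lowerfiltr}, and the case $r=2$ is finished by Lemma~\ref{L:vanishingr2}. The substantive content is therefore to handle $\preext^m(\partial y)$ for $y \in Z^{r-1}_{-s+r-1,t-r+2}$ with $r \geq 3$.

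First I would decompose $y = y_{\mathrm{head}} + y_{\mathrm{tail}}$, where $y_{\mathrm{head}} = \sum_{j=s-r+1}^{s-1} y^j$ and $y_{\mathrm{tail}} = \sum_{j \geq s} y^j \in F^{-s}$; both pieces are $(r-1)$-cycles, so additivity splits the task. The tail contributes nothing because $\partial y_{\mathrm{tail}} \in \partial F^{-s}$ and $\preext^m$ vanishes on that subspace (the Proposition immediately preceding Lemma~\ref{L:verticalpartial}). For the head, Lemma~\ref{L:partialdiagram} provides the factorization $\dmap_{\partial y_{\mathrm{head}}} = \dmap_{y_{\mathrm{head}}} \circ \dmap_{\partial \imath}$ through the universal example $D_{r's't'}$, where $(r',s',t') := (r-1,\, s+1-r,\, t-r+2)$. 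Unwinding Definition~\ref{D:preext}, this gives
\[ \preext^m(\partial y_{\mathrm{head}}) = E^r(\e(\dmap_{y_{\mathrm{head}}}))\bigl(E^r(\e(\dmap_{\partial\imath}))(\exte_{p,q})\bigr) \]
with $(p,q)$ the target bidegree, so it suffices to show that $\exte_{p,q}$, viewed inside $E^*(\e(D_{r's't'}))$, dies by the page $w$ prescribed in the theorem.

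The key numerical observation is that $-s'-r' = -s$ and $2t'+2r'-2 = 2t$, so the target bidegrees of our operations sit precisely inside the upper ``\reflectbox{L}'' of the $E^2$ page of $\e(D_{r's't'})$ classified by Theorem~\ref{T:e2page}. Corollary~\ref{C:vanishingofbidegrees}, transcribed from $(r,s,t)$ to $(r',s',t')$, then reads off the vanishing page for each relevant bidegree: the vertical column $\{-s\} \times [2t,\infty)$ collapses at page $r'+1 = r$, covering all vertical operations as well as the two endpoint horizontal cases $m = t$ (corner $(-s,2t)$) and $m = t-s$ (corner $(-2s,2t) = (-2s'-2r',\,2t'+2r'-2)$); the middle portion $[-2s+1,\,-s-r+2] \times \{2t\}$ of the upper row dies at page $2r' = 2r-2$, covering $m \in [t-s+1,\,t-r+2]$; and the right portion $[-s-r+3,\,-s-1] \times \{2t\}$ dies at page $-s'-p+1 = r+t-m$, covering $m \in [t-r+3,\,t-1]$. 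These pages are exactly the piecewise values of $w$ in the theorem.

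\textbf{Main obstacle.} The main subtlety is that $\dmap_{\partial y}$ does \emph{not} factor through $\dmap_{\partial\imath}$ in general; the head/tail split is forced precisely because Lemma~\ref{L:partialdiagram} requires $y$ to have no components in cosimplicial degree $\geq s$. Once the split is made, reading off $w$ is mechanical: one simply transcribes Corollary~\ref{C:vanishingofbidegrees} into the shifted parameters $(r',s',t')$, and the resulting piecewise $w$ is intrinsic to the geometry of the universal example's upper ``\reflectbox{L}''.
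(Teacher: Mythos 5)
Your proof is correct and follows essentially the same route as the paper's, with the only difference being that you inline the head/tail decomposition and the factorization through $\dmap_{\partial\imath}$ (which the paper packages as Lemma~\ref{L:verticalpartial}) rather than citing that lemma directly; the key numerical observation $(-s'-r',\,2t'+2r'-2)=(-s,2t)$ and the transcription of Corollary~\ref{C:vanishingofbidegrees} into the shifted parameters $(r',s',t')=(r-1,s+1-r,t-r+2)$ are exactly how the paper reads off $w$.
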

\begin{proof} We have already shown that the vertical operations pass to this quotient using Lemmas~\ref{L:lowerfiltr} and \ref{L:verticalpartial}. The well-definedness of the horizontal operations follows from the diagram in Lemma~\ref{L:verticalpartial} by applying the second part of Corollary~\ref{C:vanishingofbidegrees} to $D_{r-1,s+1-r, t-r+2}$.
\end{proof}

We give an example to show that the $w$ above is the best possible. Consider the diagram \[ \xymatrix{
\e(D_{rst}) \ar@{->}[dr]_{\e(\dmap_{\partial y})} \ar@{->}[rr]^-{\e(\dmap_{\partial \imath})} && \e(D_{r-1,s+1-r,t-r+2}) \ar@{->}[dl]^{\e(\dmap_y)} \\
& \e(Y) 
}\]
from Lemma~\ref{L:verticalpartial}.
\begin{ex}\label{exampleonlastpage}
We let $t=s\geq r-1$, and take as our example the cosimplicial chain complex $Y=D_{r-1,s+1-r,s-r+2}$ together with the class\[ y=\imath= \sum_{k=s+1-r}^k \id_{[k]}.\] 
The diagram
\[ \xymatrix{
H_s(D_{rss}) \ar@{->}[r]^-{\dmap_{\partial \imath}} \ar@{->}[d]^\cong& H_{s-1}(D_{r-1,s+1-r,s-r+2}) \\
H_s (D_{\infty ss}) \ar@{->}[r]^\cong_\partial & H_{s-1} (\sk_{s-1}  \Delta) \ar@{->}[u]^{\cong} 
}\]
commutes, and so the diagram
\[ \xymatrix{
& \vom_{s} \ar@{->}[dr] \ar@{->}[dl] &\\ 
E^1(\e(D_{rss})) \ar@{->}[rr] && E^1(\e( D_{r-1,s+1-r,s-r+2}))
}\]
also commutes, where $\vom_{s}$ is the complex defined in \eqref{E:vom}. Thus, at $E^2$, generators in the strip $[-2s,-s]\times \set{2t}$ map to nonzero elements. Vanishing of their images occurs at exactly the page described by `$w$' in the theorem.
\end{ex}

\section*{Acknowledgments} This work was part of the author's Ph.D. thesis. The author thanks his advisor, Jim McClure, for careful readings and numerous clarifying suggestions. The author would also like to thank Sean Tilson for his interest and for useful discussions about certain parts of this paper. Finally, the many suggestions from the anonymous referee helped the author improve the paper substantially, and for that the author thanks the referee as well.

\bibliographystyle{plain}
\bibliography{operations}

\begin{thebibliography}{10}

\bibitem{bahri}
Anthony~P. Bahri.
\newblock Operations in the second quadrant {E}ilenberg-{M}oore spectral
  sequence.
\newblock {\em J. Pure Appl. Algebra}, 27(3):207--222, 1983.

\bibitem{bauesmuro}
H.-J. Baues and F.~Muro.
\newblock Cohomologically triangulated categories. {II}.
\newblock {\em J. K-Theory}, 3(1):1--52, 2009.

\bibitem{boardman}
J.~Michael Boardman.
\newblock Conditionally convergent spectral sequences.
\newblock In {\em Homotopy invariant algebraic structures ({B}altimore, {MD},
  1998)}, volume 239 of {\em Contemp. Math.}, pages 49--84. Amer. Math. Soc.,
  Providence, RI, 1999.

\bibitem{bousfield}
A.~K. Bousfield.
\newblock On the homology spectral sequence of a cosimplicial space.
\newblock {\em Amer. J. Math.}, 109(2):361--394, 1987.

\bibitem{bkbook}
A.~K. Bousfield and D.~M. Kan.
\newblock {\em Homotopy limits, completions and localizations}.
\newblock Lecture Notes in Mathematics, Vol. 304. Springer-Verlag, Berlin,
  1972.

\bibitem{bk}
A.~K. Bousfield and D.~M. Kan.
\newblock A second quadrant homotopy spectral sequence.
\newblock {\em Trans. Amer. Math. Soc.}, 177:305--318, 1973.

\bibitem{dwyer}
W.~G. Dwyer.
\newblock Higher divided squares in second-quadrant spectral sequences.
\newblock {\em Trans. Amer. Math. Soc.}, 260(2):437--447, 1980.

\bibitem{gj}
Paul~G. Goerss and John~F. Jardine.
\newblock {\em Simplicial Homotopy Theory}, volume 174 of {\em Progress in
  Mathematics}.
\newblock Birkh\"auser Verlag, Basel, 1999.

\bibitem{me2b}
Philip Hackney.
\newblock Homology operations and cosimplicial iterated loop spaces.
\newblock Preprint, arXiv:1102.0020 [math.AT].

\bibitem{me2a}
Philip Hackney.
\newblock Spectral sequence operations converge to {A}raki-{K}udo operations.
\newblock Preprint, arXiv:1101.5395 [math.AT].

\bibitem{mythesis}
Philip Hackney.
\newblock {\em Homology Operations in the Spectral Sequence of a Cosimplicial
  Space}.
\newblock PhD thesis, Purdue University, West Lafayette, Indiana, 2010.

\bibitem{kan-ss}
Daniel~M. Kan.
\newblock Semisimplicial spectra.
\newblock {\em Illinois J. Math.}, 7:463--478, 1963.

\bibitem{kudoaraki}
Tatsuji Kudo and Sh{\^o}r{\^o} Araki.
\newblock Topology of {$H_n$}-spaces and {$H$}-squaring operations.
\newblock {\em Mem. Fac. Sci. Ky\=usy\=u Univ. Ser. A.}, 10:85--120, 1956.

\bibitem{ligaardmadsen}
Hans Ligaard and Ib~Madsen.
\newblock Homology operations in the {E}ilenberg-{M}oore spectral sequence.
\newblock {\em Math. Z.}, 143:45--54, 1975.

\bibitem{homology}
Saunders Mac~Lane.
\newblock {\em Homology}.
\newblock Die Grundlehren der mathematischen Wissenschaften, Bd. 114. Academic
  Press Inc., Publishers, New York, 1963.

\bibitem{may}
J.~Peter May.
\newblock A general algebraic approach to {S}teenrod operations.
\newblock In {\em The {S}teenrod {A}lgebra and its {A}pplications ({P}roc.
  {C}onf. to {C}elebrate {N}. {E}. {S}teenrod's {S}ixtieth {B}irthday,
  {B}attelle {M}emorial {I}nst., {C}olumbus, {O}hio, 1970)}, Lecture Notes in
  Mathematics, Vol. 168, pages 153--231. Springer, Berlin, 1970.

\bibitem{mccleary}
John McCleary.
\newblock {\em A User's Guide to Spectral Sequences}, volume~58 of {\em
  Cambridge Studies in Advanced Mathematics}.
\newblock Cambridge University Press, Cambridge, second edition, 2001.

\bibitem{mimuramori}
Mamoru Mimura and Masamitsu Mori.
\newblock The squaring operations in the {E}ilenberg-{M}oore spectral sequence
  and the classifying space of an associative {$H$}-space. {I}.
\newblock {\em Publ. Res. Inst. Math. Sci.}, 13(3):755--776, 1977/78.

\bibitem{rector}
David~L. Rector.
\newblock Steenrod operations in the {E}ilenberg-{M}oore spectral sequence.
\newblock {\em Comment. Math. Helv.}, 45:540--552, 1970.

\bibitem{shipley}
Brooke~E. Shipley.
\newblock Convergence of the homology spectral sequence of a cosimplicial
  space.
\newblock {\em Amer. J. Math.}, 118(1):179--207, 1996.

\bibitem{singer}
William~M. Singer.
\newblock {\em Steenrod Squares in Spectral Sequences}, volume 129 of {\em
  Mathematical Surveys and Monographs}.
\newblock American Mathematical Society, Providence, RI, 2006.

\bibitem{turner}
James~M. Turner.
\newblock Operations and spectral sequences. {I}.
\newblock {\em Trans. Amer. Math. Soc.}, 350(9):3815--3835, 1998.

\end{thebibliography}
\end{document}